\newtheorem{teor}{Theorem}[section]
\newtheorem{lemma}[teor]{Lemma}
\newtheorem{theorem}[teor]{Theorem}
\newtheorem{proposition}[teor]{Proposition}
\newtheorem{con lemma}[teor]{Continuity Lemma}
\newtheorem{corollary}[teor]{Corollary}
\newtheorem*{conj}{Conjecture}
\theoremstyle{definition}
\newtheorem{definition}{Definition}[section]
\newtheorem*{notation}{Notations}
\newtheorem{remark}[teor]{Remark}
\newcommand{\R}{\mathbb{R}}
\newcommand{\N}{\mathbb{N}}
\def\beq{\begin{equation}}
\def\eeq{\end{equation}}
\def\pa{\partial}
\def\t{\theta}
\def\d{\delta}
\def\g{\gamma}
\def\s{\sigma}
\def\f{\varphi}
\def\l{\lambda}
\def\a{\alpha}
\def\eps{\varepsilon}
\def\D{\Delta}
\def\S{\mathbb{S}}
\title{\sc Monotonicity and $1$-dimensional symmetry for solutions of an elliptic system arising in Bose-Einstein condensation}
\author{Alberto Farina and Nicola Soave}
\begin{document}



\maketitle
{\footnotesize
\centerline{Alberto Farina}
 \centerline{LAMFA, CNRS UMR 7352, Universit\'e de Picardie Jules Verne}
   \centerline{33 rue Saint-Leu, 80039 Amiens, France}
   \centerline{and}
   \centerline{Institut Camille Jordan, CNRS UMR 5208, Universit\'e Claude Bernard Lyon I}
   \centerline{43 boulevard du 11 novembre 1918, 69622 Villeurbane cedex, France}
   \centerline{email: alberto.farina@u-picardie.fr}

\medskip 
\centerline{Nicola Soave}
 \centerline{Universit\`a degli Studi di Milano - Bicocca, Dipartimento di Ma\-t\-ema\-ti\-ca e Applicazioni}
   \centerline{Via Roberto Cozzi 53, 20125 Milano, Italy}
\centerline{email: n.soave@campus.unimib.it}   }

\begin{abstract}
\noindent We study monotonicity and $1$-dimensional symmetry for positive solutions with algebraic growth of the following elliptic system:
\[
\begin{cases}
-\Delta u = -u v^2 & \text{in $\R^N$}\\
-\Delta v= -u^2 v & \text{in $\R^N$},
\end{cases}
\]
for every dimension $N \ge 2$. In particular, we prove a Gibbons-type conjecture proposed by H. Berestycki, T. C. Lin, J. Wei and C. Zhao.
\end{abstract}

\noindent \textbf{Keywords:} elliptic system; phase-separation; $1$-dimensional symmetry; blow-down sequence; moving planes method.

\section{Introduction}

This paper concerns monotonicity and $1$-dimensional symmetry for entire solutions with algebraic growth of the following semilinear elliptic system:
\begin{equation}\label{system}
\begin{cases}
-\Delta u= - u v^2 & \text{in $\R^N$} \\
-\Delta v=- u^2 v & \text{in $\R^N$} \\
u, v>0 & \text{in $\R^N$},
\end{cases}
\end{equation}
where $N \ge 2$. System \eqref{system} has been intensively studied during the last years, starting from the seminal papers \cite{BeLiWeZh} and \cite{NoTaTeVe}. Therein, \eqref{system} appears in the analysis of phase-separation phenomena for Bose-Einstein condensates with multiple states (we refer to \cite{BeLiWeZh, BeTeWaWe} and to the references therein for more details concerning the physical motivations). In particular, in \cite{BeLiWeZh} is emphasized the relationship between system \eqref{system} and the celebrated Allen-Cahn equation. This relationship induced the authors to formulate a De Giorgi's-type and a Gibbons'-type conjecture for the solutions of \eqref{system} (we refer to \cite{FaVa} for a review on the De Giorgi's conjecture and some related problems). In this paper we address precisely the following Gibbons'-type conjecture:
\begin{conj}[section 7 of \cite{BeLiWeZh}]
Let $N \ge 2$, let $(u,v)$ be a solution of \eqref{system} satisfying 
\[
\begin{split}
\lim_{x_N \to -\infty} u(x',x_N)= 0 \quad \text{and} \quad \lim_{x_N \to +\infty} u(x',x_N)= +\infty  \\
\lim_{x_N \to -\infty} v(x',x_N)= +\infty \quad \text{and} \quad \lim_{x_N \to +\infty} v(x',x_N)= 0, 
\end{split}
\]
the limits being uniform in $x' \in \R^{N-1}$. Then $(u,v)$ is $1$-dimensional.
\end{conj}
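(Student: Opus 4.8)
\medskip
\noindent\emph{Strategy of proof.} The plan is to first prove that any solution satisfying the hypotheses is \emph{monotone}, namely $\partial_{x_N}u\geq 0$ and $\partial_{x_N}v\leq 0$, and then to deduce $1$--dimensional symmetry from monotonicity by letting the direction of monotonicity rotate. The structural facts behind everything come from adding and subtracting the two equations in \eqref{system}: writing $w:=u-v$ and $s:=u+v>0$ one gets $-\Delta w=(uv)\,w$ and $-\Delta s=-(uv)\,s$ (so $s$ is subharmonic), and the hypotheses say exactly that $w(x',x_N)\to\mp\infty$ as $x_N\to\mp\infty$, uniformly in $x'$. Equivalently, $\xi:=w/s=(u-v)/(u+v)$ takes values in $(-1,1)$, solves the weighted Allen--Cahn equation $\operatorname{div}(s^2\nabla\xi)=\tfrac12\,s^4\,(\xi^3-\xi)$, and satisfies $\xi(x',x_N)\to\pm1$ uniformly as $x_N\to\pm\infty$: this is precisely a Gibbons configuration, but for an equation whose coefficient is not translation invariant, which is why monotonicity cannot simply be read off a scalar autonomous problem and the system structure must be kept.

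\medskip
\noindent\emph{A priori bounds.} Before running maximum--principle arguments on all of $\R^N$, I would upgrade the algebraic growth to \emph{linear} growth, $u(x)+v(x)\leq C(1+|x|)$, together with gradient bounds $|\nabla u|+|\nabla v|\leq C$ and an interaction bound $u(x)v(x)\leq C$. The tool is a blow--down: for $R\to+\infty$ put $u_R(x):=u(Rx)/L(R)$ and $v_R(x):=v(Rx)/L(R)$ with $L(R):=\|u\|_{L^\infty(B_{2R})}+\|v\|_{L^\infty(B_{2R})}$, so that $(u_R,v_R)$ solves \eqref{system} with the reaction terms multiplied by $R^2L(R)^2\to+\infty$. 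By the uniform H\"older estimates available for such strongly competing systems the family is precompact, and any limit $(u_\infty,v_\infty)$ consists of nonnegative functions, harmonic in their positivity sets, with $u_\infty v_\infty\equiv 0$ and $u_\infty-v_\infty$ harmonic; the \emph{uniform} Gibbons limits force the positivity set of $u_\infty$ to be a half--space $\{x_N>c\}$, whence $u_\infty-v_\infty$ is a positive multiple of $x_N$ plus a constant, and in particular has linear growth. Feeding this affine behaviour of the blow--down back into local elliptic and interaction estimates gives the stated bounds for $(u,v)$; these bounds are what guarantees compactness of translates and control of the regions where $u$ or $v$ is large in the monotonicity step.

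\medskip
\noindent\emph{Monotonicity.} Fix $t>0$ and write $f^t(x):=f(x',x_N+t)$. With $\phi_t:=u^t-u$ and $\psi_t:=v-v^t$, a direct computation from \eqref{system} gives the weakly coupled cooperative system
\[
\begin{cases}
-\Delta\phi_t+(v^t)^2\,\phi_t=u\,(v+v^t)\,\psi_t & \text{in }\R^N,\\[1mm]
-\Delta\psi_t+(u^t)^2\,\psi_t=v\,(u+u^t)\,\phi_t & \text{in }\R^N,
\end{cases}
\]
all coefficients being nonnegative. I would show $\phi_t\geq 0$ and $\psi_t\geq 0$ for all $t>0$ and then differentiate at $t=0^+$. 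This is a sliding argument. First, using the uniform limits (for the half--spaces $\{x_N\geq M\}$ and $\{x_N\leq -M\}$) and the linear growth bound (for the slab $\{|x_N|\leq M\}$ when $|x'|$ is large), one shows $\phi_t,\psi_t\geq 0$ for all $t$ large. Then set $t_*:=\inf\{t>0:\phi_s\geq0\text{ and }\psi_s\geq0\ \forall s\geq t\}$ and suppose, for contradiction, $t_*>0$; by continuity $\phi_{t_*},\psi_{t_*}\geq0$. If $\phi_{t_*}$ vanishes at some $x_0$, then since $\psi_{t_*}\geq0$ the first equation gives $-\Delta\phi_{t_*}+(v^{t_*})^2\phi_{t_*}\geq0$ with $\phi_{t_*}\geq0$ vanishing at $x_0$, so the strong maximum principle forces $\phi_{t_*}\equiv0$, and then the same equation forces $\psi_{t_*}\equiv0$ as well (its coupling coefficient being strictly positive), i.e.\ $u$ and $v$ are $t_*$--periodic in $x_N$, contradicting the limits; the case in which $\psi_{t_*}$ vanishes somewhere is symmetric. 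If instead neither $\phi_{t_*}$ nor $\psi_{t_*}$ vanishes, but, say, $\inf_{\R^N}\phi_{t_*}=0$, one translates along a minimizing sequence and, by the compactness from the a priori bounds, passes to a limiting solution of \eqref{system} for which the infimum \emph{is} attained, reducing to the previous case --- here the uniform limits are used once more to prevent the minimizing sequence from escaping to $x_N=\pm\infty$. Hence $t_*=0$, which yields $\partial_{x_N}u\geq0$ and $\partial_{x_N}v\leq0$; the strong maximum principle applied to the linearized system then upgrades these to strict inequalities, since the limits exclude $\partial_{x_N}u\equiv0$.

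\medskip
\noindent\emph{From monotonicity to $1$--dimensional symmetry, and the main difficulty.} The only feature of the direction $x_N$ used above is that the Gibbons limits hold --- uniformly --- along it, which remains true along any unit vector $\nu$ with $\nu\cdot e_N>0$; so the same argument gives $\partial_\nu u\geq0$ and $\partial_\nu v\leq0$ for every such $\nu$. Fixing a unit vector $e\perp e_N$ and applying this to $\nu_\varepsilon^{\pm}:=(\pm e+\varepsilon e_N)/\sqrt{1+\varepsilon^2}$, then letting $\varepsilon\to0^+$, gives $\partial_e u\geq0$ and $\partial_{-e}u\geq0$, hence $\partial_e u\equiv0$, and likewise $\partial_e v\equiv0$; since $e$ is an arbitrary direction orthogonal to $e_N$, $u$ and $v$ depend on $x_N$ alone, which is the assertion. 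I expect the real work to be concentrated in the monotonicity step and, within it, in the behaviour at infinity. The cooperative system for $(\phi_t,\psi_t)$ has unbounded coefficients and fails the ``$M$--matrix'' condition --- $(v^t)^2(u^t)^2$ against $u(v+v^t)\cdot v(u+u^t)$ --- so no off--the--shelf maximum principle applies to the pair; the argument has to be set up so that the strong maximum principle is invoked only on a single scalar equation, at a point where the relevant right--hand side is already known to be nonnegative. Ruling out the escape of minimizing sequences to $x_N=\pm\infty$, and establishing the linear growth bound through the blow--down, are the other two places where the technical weight lies.
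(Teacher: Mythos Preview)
Your overall architecture --- linear growth via blow-down, then monotonicity in $x_N$, then rotation of the monotone direction through $\S^{N-1}_+$ --- matches the paper's, and your final step is essentially Section~\ref{sec:symmetry}. The divergence is in how you obtain monotonicity: you propose \emph{sliding} (comparing $u$ with its translate $u^t(x)=u(x',x_N+t)$), whereas the paper uses \emph{moving planes} (comparing $u$ with its reflection $u_\lambda(x',x_N)=u(x',2\lambda-x_N)$ in $T_\lambda=\{x_N>\lambda\}$). For unbounded solutions this choice is not cosmetic, and your starting step has a genuine gap.

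You assert that $\phi_t=u^t-u\ge0$ and $\psi_t=v-v^t\ge0$ for all large $t$, citing ``uniform limits and the linear growth bound.'' But in $\{x_N>M\}$ both $u(x',x_N)$ and $u(x',x_N+t)$ tend to $+\infty$; neither the uniform limits nor the upper bound $u+v\le C(1+|x|)$ says anything about the sign of their difference, and symmetrically $\psi_t$ is undetermined in $\{x_N\ll -1\}$. Moreover $\phi_t^-=(u-u^t)^+$ is not a~priori bounded in $\{x_N\gg1\}$, so the maximum principle in unbounded domains (Lemma~\ref{max_principle unbounded}, which requires the positive part in $L^\infty$) does not apply --- and your claimed global Lipschitz bound $|\nabla u|+|\nabla v|\le C$, which would give $|\phi_t|\le Ct$, is itself not a consequence of linear growth plus interior estimates (those yield only $|\nabla u|\le C(1+|x|)$). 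Reflection avoids all of this: since $2\lambda-x_N<\lambda$, the function $u_\lambda$ takes values in the region where $u$ is bounded (Lemma~\ref{bound on strips}), and $(v-v_\lambda)^+\le v$ is small in $T_\lambda$. Even so, the paper cannot start the moving planes without first proving $\partial_N u>0$ in $\{x_N>M_N\}$ directly; this is the whole of Section~\ref{sec: monot at infinity}, and it requires the blow-down to converge to $(\gamma x_N^+,\gamma x_N^-)$ \emph{uniformly in the base point} $x_0\in\{|u-v|<\bar C_3\}$, together with two-sided bounds $\bar C_5\le\sqrt{H(x_0,R)}/R\le\bar C_6$ coming from an Alt--Caffarelli--Friedman formula with $x_0$-independent constants (Section~\ref{sec:refined}). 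Your blow-down paragraph delivers linear growth but not this uniform-in-$x_0$ control, and without something of that strength neither your sliding nor the paper's moving planes can be started.
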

\noindent Clearly, with respect to the original counterparts, major difficulties arise from the fact that in the present case we have to deal with a system of equations instead of with a single equation, and with unbounded solutions. \\
In what follows, we review the main achievements concerning the existence and the $1$-dimensional symmetry of entire solutions to \eqref{system}. In \cite{NoTaTeVe}, it is showed that there is not a positive solution which is globally $\a$-H\"older continuous for some $\a \in (0,1)$. On the other hand, in \cite{BeLiWeZh} the authors proved the existence of a non-constant solution for \eqref{system} when $N=1$ (in this case we have a system of ODEs). This solution has linear growth: there exists $C>0$ such that
\[
u(t)+v(t) \le C(1+|t|) \qquad \forall t \in \R;
\]
moreover, it is reflectionally symmetric with respect to a certain $t_0 \in \R$, in the sense that 
\[
u(t_0+t)= v(t_0-t) \qquad \forall t \in \R.
\]
In \cite{BeTeWaWe} it is proved that this is the unique positive entire solution (up to translations and scalings) in case $N=1$. On the other hand, always in \cite{BeTeWaWe}, the authors constructed for every $N \ge 2$ entire solutions with arbitrary integer algebraic growth; here and in the rest of the paper we say that $(u,v)$ has algebraic growth if there exist $p \ge 1$ and $C>0$ such that
\begin{equation}\tag{h1}\label{alg growth}
u(x)+v(x) \le C(1 +|x|^p) \qquad \forall x \in \R^N.
\end{equation}
These solutions, which depend on more then one variable, are constructed exploiting the deep relationship between entire solutions of \eqref{system} and entire harmonic functions. This relationship has been established in \cite{DaWaZh, NoTaTeVe, TaTe}. Recently, a similar argument has been exploited in \cite{SoZi} to prove the existence of solutions to \eqref{system} having exponential growth in one direction.\\
Concerning symmetry results, we say that $(u,v)$ is $1$-dimensional if there exists $\nu \in \R^N$ such that
\[
u(x)=\bar u(\langle \nu,x \rangle) \quad \text{and} \quad v(x) = \bar v(\langle \nu, x \rangle),
\]
for some $\bar u,\bar v: \R \to \R$. In \cite{BeLiWeZh} the authors proved that if $N=2$, $(u,v)$ has linear growth and is monotone in the $e_N$ direction, in the sense that
\[
\frac{\pa u}{\pa x_N}>0 \quad \text{and} \quad \frac{\pa v}{\pa x_N}<0 \quad \text{in $\R^N$},
\]
then $(u,v)$ is $1$-dimensional. An improvement of this result has been recently obtained by the first author in \cite{Fa2}: he replaced the linear growth condition with an arbitrary algebraic growth condition (i.e. \eqref{alg growth}), and weakened the monotonicity assumption requiring that only one component between $u$ and $v$ is monotone in $x_N$. Always in case $N=2$, in \cite{BeTeWaWe} it is showed that if $(u,v)$ has linear growth and is stable then $(u,v)$ is $1$-dimensional. As far as the case $N \ge 2$ is concerned, we refer to the recent contribution \cite{Wa}: the author proved that for any $N \ge 2$, if $(u,v)$ has linear growth and is a local minimizer for the energy functional, then $(u,v)$ is $1$-dimensional.\\
Our main result is the following:
\begin{theorem}\label{main thm}
Let $N \ge 2$, let $(u,v)$ be a solution of system \eqref{system} having algebraic growth (i.e. satisfying \eqref{alg growth})  and such that
\begin{equation}\tag{h2}\label{uniform limit}
\begin{split}
\lim_{x_N \to -\infty} u(x',x_N)= 0 \quad \text{and} \quad \lim_{x_N \to +\infty} u(x',x_N)= +\infty  \\
\lim_{x_N \to -\infty} v(x',x_N)= +\infty \quad \text{and} \quad \lim_{x_N \to +\infty} v(x',x_N)= 0, 
\end{split}
\end{equation}
the limit being uniform in $x' \in \R^{N-1}$. Then $(u,v)$ depends only on the $x_N$ variable, and
\[
\frac{\pa u}{\pa x_N}  >0 \quad \text{and} \quad \frac{\pa v}{\pa x_N} <0 \quad \text{in $\R^N$}.
\]
\end{theorem}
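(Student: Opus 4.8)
The proof I have in mind rests on two pillars: the sliding (moving-plane) method, made available by a cooperative structure hidden in \eqref{system}, and a blow-down analysis, needed to control the unbounded growth. The structural observation is that for any $\tau\in\R^N$, writing $u^\tau(x):=u(x+\tau)$ and $v^\tau(x):=v(x+\tau)$, the pair $(U,V):=(u^\tau-u,\,v-v^\tau)$ solves the \emph{cooperative} linear system
\[
-\Delta U+v^2U=u^\tau(v^\tau+v)\,V,\qquad -\Delta V+u^2V=v^\tau(u^\tau+u)\,U,
\]
with nonnegative zero-order coefficients $v^2,u^2$ and positive coupling coefficients. This is precisely what maximum-principle and sliding arguments require; the genuine difficulty is that we work on all of $\R^N$ with $u$ and $v$ unbounded, and this is where \eqref{alg growth} and the blow-down come in.

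\textbf{Step 1 (monotonicity).} I would first prove $\partial_{x_N}u>0$ and $\partial_{x_N}v<0$ by sliding in the direction $e_N$, i.e. taking $\tau=te_N$ with $t>0$ above. The scheme is the classical one: show that $\Lambda:=\{t>0:\ u^{te_N}\ge u\ \text{and}\ v^{te_N}\le v\ \text{in}\ \R^N\}$ is nonempty, relatively closed and relatively open in $(0,+\infty)$, hence equal to it. For nonemptiness (large $t$) and for the openness step one uses \eqref{uniform limit}: on $\{x_N\ge -M\}$, a large upward shift makes $u$ arbitrarily large and $v$ arbitrarily small uniformly in $x'$, forcing $U,V\ge0$ there; on $\{x_N<-M\}$, where $v$ and hence the potential $v^2$ is large, the cooperative system is strongly coercive and a maximum principle holds with no growth restriction, while \eqref{alg growth} supplies a Phragmén--Lindelöf-type barrier in the intermediate region. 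Once $\Lambda=(0,+\infty)$, differentiating at $t=0^+$ gives $\partial_{x_N}u\ge0$ and $\partial_{x_N}v\le0$; strictness follows since $(\partial_{x_N}u,\,-\partial_{x_N}v)$ also solves a cooperative system with nonnegative potentials, so by the strong maximum principle it is either strictly positive or $\equiv0$, the latter being excluded by \eqref{uniform limit}.

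\textbf{Step 2 (one-dimensional symmetry).} Put $w:=u-v$; by Step 1, $\partial_{x_N}w>0$, and by \eqref{uniform limit}, $w\to\pm\infty$ uniformly as $x_N\to\pm\infty$, so the zero set $\{w=0\}$ is a graph over $\R^{N-1}$ confined to a slab $\{|x_N|\le M\}$. I would then study a blow-down sequence $w_n(x):=w(R_nx)/\|w\|_{L^\infty(B_{R_n})}$, $R_n\to+\infty$: using \eqref{alg growth} and elliptic estimates one obtains a nonzero $C^1_{\mathrm{loc}}$ limit $w_\infty$, and the key point --- imported from the segregation/regularity theory for limiting configurations of \eqref{system}, cf. \cite{NoTaTeVe,TaTe,DaWaZh} --- is that $w_\infty$ is an \emph{entire harmonic function}: at large scales the effective coupling diverges, the rescaled components tend to a segregated pair $u_\infty v_\infty\equiv0$ that is harmonic on its positivity set, and the domain-variation (Pohozaev) identities, which pass to the limit, make $w_\infty=u_\infty-v_\infty$ harmonic everywhere. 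Since $\partial_{x_N}w_\infty\ge0$ is a nonnegative entire harmonic function it is a constant $c\ge0$, so $w_\infty(x)=cx_N+h(x')$ with $h$ harmonic; and since the slab $\{|x_N|\le M\}$ collapses onto $\{x_N=0\}$ under the rescaling, $\{w_\infty=0\}\subseteq\{x_N=0\}$. A nonconstant entire harmonic function has no constant sign, so this forces $h\equiv0$ and $c>0$, i.e. $w_\infty(x)=cx_N$; carrying this out along every blow-down sequence yields that $w$, hence $u$ and $v$, has exactly \emph{linear} growth. With linear growth in force, $u,v$ and their gradients are controlled on slabs, and the sliding method can finally be run in every direction $\nu$ with $\langle\nu,e_N\rangle>0$ (again using the uniformity in $x'$ of \eqref{uniform limit} and the cooperative system with $\tau=t\nu$), giving $u(x+t\nu)\ge u(x)$ and $v(x+t\nu)\le v(x)$ for all $t>0$. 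Consequently $\langle\nabla u,\nu\rangle\ge0$ for every $\nu$ in the open half-space $\{\nu_N>0\}$, which forces $\nabla u$ (and likewise $\nabla v$) to be parallel to $e_N$ at each point; thus $(u,v)$ depends only on $x_N$, with the signs of the derivatives fixed by Step 1.

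\textbf{Expected main obstacle.} Everything hinges on the behaviour at infinity. The sliding machinery and the maximum principle are not available off-the-shelf for unbounded solutions on $\R^N$; one must orchestrate (i) the coercivity furnished by $v^2$, resp. $u^2$, in the half-space where $v$, resp. $u$, is large, (ii) the algebraic-growth barrier \eqref{alg growth} in between, and, above all, (iii) the blow-down analysis, whose crux is to prove rigorously that the rescaled difference $w_\infty$ is a genuine entire harmonic function --- i.e. to transplant the segregation theory for limiting profiles of \eqref{system} to this non-compact setting and use it to upgrade algebraic growth to linear growth. I expect this identification of the blow-down limit, together with making the tilted-direction sliding rigorous, to be the hardest part.
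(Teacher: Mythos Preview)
Your overall strategy contains the right ingredients --- a cooperative comparison system, a blow-down to a harmonic limit, and a rotation argument at the end --- but the order in which you assemble them creates a genuine gap: Step~1 cannot be executed before Step~2.

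The specific failure is the ``nonemptiness of $\Lambda$''. You claim that on $\{x_N\ge-M\}$ a large upward shift makes $u^{te_N}$ large and $v^{te_N}$ small uniformly in $x'$, ``forcing $U,V\ge0$ there''. But $U=u^{te_N}-u$, and on $\{x_N\ge-M\}$ the function $u$ itself is \emph{unbounded} (it tends to $+\infty$ as $x_N\to+\infty$); knowing that $u^{te_N}$ is large says nothing about the sign of $u^{te_N}-u$ where $u$ is also large. The same objection applies to $V=v-v^{te_N}$ on $\{x_N\ll-1\}$. Your coercivity fallback on $\{x_N<-M\}$ does not rescue the \emph{system}: there $v^2$ is large (good for the $U$-equation) but $u^2$ is small (bad for the $V$-equation); on $\{x_N>M\}$ the roles reverse. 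No half-space is coercive for both equations simultaneously, and mere algebraic growth is far too weak to serve as a Phragm\'en--Lindel\"of barrier when the zero-order potential is only $\ge0$ (compare the hypothesis $v^+\in L^\infty$ in Lemma~\ref{max_principle unbounded}). Since your Step~2 blow-down then invokes $\partial_{x_N}w>0$ from Step~1 to apply Liouville to $\partial_{x_N}w_\infty$, the defect propagates.

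The paper proceeds in the opposite order, and the reversal is essential. The blow-down (Section~\ref{sec: uniqueness}) is carried out \emph{first}, using only the uniform limit $v\to0$ as $x_N\to+\infty$ together with the Almgren monotonicity formula --- no monotonicity of $(u,v)$ is assumed --- to identify the scaled limit as $(\gamma x_N^+,\gamma x_N^-)$; this already forces linear growth. Linear growth then yields uniform $L^\infty$ and Lipschitz bounds on every horizontal slab (Lemma~\ref{bound on strips}) and, through interior gradient estimates for the Poisson equation against the linear blow-down profile (Section~\ref{sec: monot at infinity}), gives $\partial_{x_N}u>0$ on $\{x_N\gg1\}$ and $\partial_{x_N}v<0$ on $\{x_N\ll-1\}$. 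Only then does the paper run \emph{moving planes by reflection}, not sliding (Proposition~\ref{monotonicity}); and the choice matters: reflection $x_N\mapsto2\lambda-x_N$ compares $u$ in $T_\lambda$ with $u_\lambda$, the value of $u$ at a point \emph{below} $\lambda$, which by the slab bounds is \emph{bounded}; together with the monotonicity-at-infinity this is exactly what lets the comparison start. The final rotation to all directions of the upper hemisphere (Section~\ref{sec:symmetry}) is close to what you sketch at the end of Step~2.
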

Some remarks are in order: the conjecture proposed by H. Berestycki, T. C. Lin, J. Wei and C. Zhao in \cite{BeLiWeZh} was formulated without assumption \eqref{alg growth}. Nevertheless, at this stage it seems really hard to deal without an algebraic growth condition, because most of the results which are present in the literature rest strongly on it (concerning symmetry results, except the work \cite{Fa2} all the quoted achievements are obtained under the linear growth assumption). As far as we know, the unique contribution going beyond the algebraic growth is given in \cite{SoZi}, where the authors proved the existence of solutions to \eqref{system} with exponential growth. Therein, it is often remarked the striking difference between solutions having algebraic growth and solutions having exponential growth, which reflects the difference between harmonic polynomial and harmonic function with exponential growth. For us, the main problem to deal with solutions not satisfying the algebraic growth condition would be the lack of the blow-down technology, see Theorem 1.4 of \cite{BeTeWaWe}. \\
On the other hand, in light of the strongly coupled nature of system \eqref{system}, we can weaken assumption \eqref{uniform limit} obtaining again monotonicity and $1$-dimensional symmetry.

\begin{corollary}\label{main corol}
Let $N \ge 2$, and let $(u,v)$ be a solution of system \eqref{system} having algebraic growth (i.e. satisfying \eqref{alg growth}), and such that
\begin{equation}\tag{h3}\label{limit at infinity}
\lim_{x_N \to \pm \infty} \left(u(x',x_N) - v(x',x_N) \right)= \pm \infty,
\end{equation}
the limits being uniform in $x' \in \R^{N-1}$. Then $(u,v)$ depends only on the $x_N$ variable, and
\[
\frac{\pa u}{\pa x_N}  >0 \quad \text{and} \quad \frac{\pa v}{\pa x_N} <0 \quad \text{in $\R^N$}.
\]
\end{corollary}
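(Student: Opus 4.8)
The plan is to deduce Corollary~\ref{main corol} from Theorem~\ref{main thm} by showing that hypothesis \eqref{limit at infinity}, together with the algebraic growth \eqref{alg growth}, forces \eqref{uniform limit}; once this is established, Theorem~\ref{main thm} applies verbatim. Two of the four limits in \eqref{uniform limit} come for free from positivity: since $v>0$,
\[
u(x',x_N)\ \ge\ u(x',x_N)-v(x',x_N)\ \xrightarrow[x_N\to+\infty]{}\ +\infty ,
\]
uniformly in $x'$, and symmetrically $v(x',x_N)\ge v(x',x_N)-u(x',x_N)\to+\infty$ uniformly as $x_N\to-\infty$. So it remains to prove that $v(x',x_N)\to0$ uniformly as $x_N\to+\infty$ and $u(x',x_N)\to0$ uniformly as $x_N\to-\infty$. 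These two statements are exchanged by the symmetry of \eqref{system}: $(u,v)$ solves \eqref{system} if and only if $(\widetilde u,\widetilde v):=\bigl(v(x',-x_N),\,u(x',-x_N)\bigr)$ does, and $(\widetilde u,\widetilde v)$ still satisfies \eqref{alg growth} and \eqref{limit at infinity}; hence it is enough to prove the following claim: \emph{if $(u,v)$ solves \eqref{system}, satisfies \eqref{alg growth}, and $u(x',x_N)\to+\infty$ uniformly in $x'$ as $x_N\to+\infty$, then $v(x',x_N)\to0$ uniformly in $x'$ as $x_N\to+\infty$.}

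For the claim I would use the equation $\Delta v=u^2 v\ge0$ for the minority component together with a barrier argument. For every $k>0$ there is $T_k$ (with $T_k\to+\infty$) such that $u\ge k$ on the half-space $\{x_N\ge T_k\}$, whence $\Delta v\ge k^2 v$ there. On $\{x_N=T_k\}$, \eqref{alg growth} bounds $v$ by $A_k(1+|x'|^2)^{p/2}$ with $A_k=C(1+T_k)^p$; comparing $v$ on $\{x_N\ge T_k\}$ with the function $\psi(x)=A_k(1+|x'|^2)^{p/2}e^{-\frac k2(x_N-T_k)}$, which is a supersolution of $\Delta\psi=k^2\psi$ once $k$ is large (because $\Delta_{x'}(1+|x'|^2)^{p/2}\le C_{N,p}(1+|x'|^2)^{p/2}$), and invoking the Phragm\'en--Lindel\"of principle for the operator $\Delta-k^2$ (legitimate since $v$ has polynomial growth and the zeroth order term is negative), I obtain
\[
v(x',x_N)\ \le\ A_k\,(1+|x'|^2)^{p/2}\,e^{-\frac k2\,(x_N-T_k)}\qquad\text{on }\{x_N\ge T_k\}.
\]

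The main obstacle is that the right-hand side still grows polynomially in $|x'|$, so this estimate alone does not give uniform smallness of $v$ as $x_N\to+\infty$. To remove the $|x'|$-dependence I would first improve the growth exponent to $p=1$: by the blow-down analysis (Theorem~1.4 of \cite{BeTeWaWe}), a blow-down limit of $u-v$ is a nontrivial harmonic polynomial $h$ which, in view of \eqref{limit at infinity}, is $\ge0$ on $\{x_N>0\}$, $\le0$ on $\{x_N<0\}$ and vanishes on $\{x_N=0\}$; by the classical description of nonnegative harmonic functions on a half-space vanishing on the boundary, $h=c\,x_N$ with $c>0$, so $(u,v)$ has linear growth and its blow-down is one-dimensional. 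One then upgrades the barrier estimate — running the comparison on $\{x_N\ge T_k\}$ as $k\to\infty$, optimizing in $k$ with a quantitative lower bound for $T_k$, and using that the linear blow-down forces $v(x)=o(|x|)$ on conical neighbourhoods of the positive $x_N$-axis — to absorb the remaining factor $(1+|x'|^2)^{1/2}$ and conclude $\sup_{x'}v(x',x_N)\to0$ as $x_N\to+\infty$.

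With the claim and its mirror image (for $u$ as $x_N\to-\infty$) established, hypothesis \eqref{uniform limit} holds, and Theorem~\ref{main thm} yields that $(u,v)$ depends only on $x_N$ and that $\partial u/\partial x_N>0$ and $\partial v/\partial x_N<0$ in $\R^N$, which is exactly the assertion of Corollary~\ref{main corol}.
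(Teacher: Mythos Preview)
Your overall strategy---reduce Corollary~\ref{main corol} to Theorem~\ref{main thm} by showing that \eqref{limit at infinity} together with \eqref{alg growth} implies \eqref{uniform limit}---is exactly the paper's. The two ``easy'' limits and the symmetry reduction are fine. Your argument for linear growth is also correct, and in fact slightly cleaner than the paper's Step~1: you observe directly that \eqref{limit at infinity} forces $u_R-v_R>0$ on $\{x_N>0\}$ for $R$ large, hence the blow-down polynomial $\Psi$ is $\ge 0$ on $\R^N_+$ and $\le 0$ on $\R^N_-$, whence $\Psi=c\,x_N$ by the Hopf/strong maximum principle argument. The paper instead first proves exponential decay of $v$ in cones $\{|x'|<\theta x_N\}$ and uses that to show $v_R\to 0$ pointwise on $\R^N_+$; your route avoids this.

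The gap is in the last step, where you try to pass from the barrier estimate
\[
v(x',x_N)\ \le\ C(1+T_k)\,(1+|x'|^2)^{1/2}\,e^{-\frac{k}{2}(x_N-T_k)}
\]
to the \emph{uniform-in-$x'$} conclusion $\sup_{x'}v(x',x_N)\to 0$. Your proposed fix---``optimizing in $k$ with a quantitative lower bound for $T_k$'' and invoking ``$v(x)=o(|x|)$ on conical neighbourhoods of the $x_N$-axis''---does not close. You have no a priori control on how $T_k$ depends on $k$ (only that $u\to+\infty$ uniformly, not at any quantitative rate), and the conical information on $v$ says nothing about the regime $|x'|\gg x_N$, which is precisely where the factor $(1+|x'|^2)^{1/2}$ is damaging. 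Even after upgrading to $p=1$, the right-hand side still diverges as $|x'|\to\infty$ for every fixed $x_N$.

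What is actually needed---and what the paper supplies in its Step~2---is a \emph{uniform} linear growth estimate centred at points that track $x'$: for each $x$ with $x_N\gg1$ one picks $\tilde x\in\{|u-v|<\bar C_3\}$ with $\tilde x'=x'$ (this set is bounded in $x_N$ and meets every vertical line, by \eqref{limit at infinity}), and then Corollary~\ref{corol 4.10 in Wa} gives $\sup_{B_{x_N/100}(x)} v \le \bar C_4(1+\tilde R)\le C x_N$, with $C$ \emph{independent of $x'$}. Feeding this into the local exponential decay (Lemma~\ref{lemma exponential decay}) yields $v(x)\le C x_N\,e^{-CKx_N}\to 0$ uniformly. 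The key input you are missing is precisely this uniformity of the constant over $x_0\in\{|u-v|<\bar C_3\}$, which in the paper comes from the Alt--Caffarelli--Friedman machinery of Section~\ref{sec:refined} (Lemmas~\ref{uniform growth for H near u=v}--\ref{Wa corretto}, Proposition~\ref{corol 4.8}, Corollary~\ref{corol 4.10 in Wa}). Without an analogue of that, your barrier estimate cannot absorb the $|x'|$-growth.
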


\begin{notation}
Let $(u,v)$ be a solution of \eqref{system}. We recall some notation that are by now standard. Given $x \in \R^N$ and $r>0$, we set
\begin{equation}\label{def di H,N}
\begin{split}
H(x,r)& := \frac{1}{r^{N-1}} \int_{\pa B_r(x)} u^2+v^2, \\
E(x,r) & := \frac{1}{r^{N-2}} \int_{B_r(x)}  |\nabla u|^2 + |\nabla v^2| +u^2 v^2,
\end{split}
\end{equation}
and $ \displaystyle N(x,r) := \frac{E(x,r)}{H(x,r)}$. The function $N$ is called \emph{Almgren frequency function}, or \emph{Almgren quotient}. \\
For every $x_0 \in \R^N$ and $R>0$, we introduce
\begin{equation}\label{def blow-down}
\left(u_{x_0,R}(x), v_{x_0,R}(x) \right) := \left( \frac{1}{\sqrt{H(x_0,R)}} u(x_0+Rx), \frac{1}{\sqrt{H(x_0,R)}} v(x_0 +Rx) \right).
\end{equation}
The family $\{(u_{x_0,R},v_{x_0,R}): R >0\}$ is called \emph{the blow-down family of $(u,v)$ centered in $x_0$}. \\
Finally, we will consider the function
\[
J(x,r) := \frac{1}{r^4} \int_{B_r(x)} \frac{|\nabla u(y)|^2+u^2(y) v^2(y)}{|x-y|^{N-2}}\,dy \int_{B_r(x)} \frac{|\nabla v(y)|^2+u^2(y) v^2(y)}{|x-y|^{N-2}}\,dy.
\]
For some properties related to the Almgren quotient, the blow-down family and the function $J$, we refer to the appendix and to the references therein. 
\medskip

\noindent We will use the notation $x =(x',x_N) \in \R^{N-1} \times \R$ for a point of $\R^N$. \\
The directional derivative with respect to $\mu \in \S^{N-1}$ will be denoted by $\frac{\pa}{\pa \mu}$ or by $\pa_\mu$. When we integrate by parts, we denote by $\pa_\nu$ the normal derivative. The $i$-th coordinate direction will be denoted by $e_i$.\\
We will use the notation $\left\langle \cdot,\cdot\right\rangle $ or $| \cdot |$ for the usual scalar product or the usual euclidean norm in any euclidean space. \\
Throughout the paper $C, C_1, C_2, \ldots$ will denote positive constants which may refer to different quantities from line to line. On the other hand, we will fix the value of some constants. In these cases we will use the over-lined notation $\bar C_1, \bar C_2,\ldots$. 
\end{notation}

\paragraph{Plan of the paper}
We wish to prove the $1$-dimensional symmetry of the solution $(u,v)$ by means of the moving planes method. First of all, in section \ref{sec:refined}, we will provide some estimates which will be useful in the rest of the paper.\\
In section \ref{sec: uniqueness} we will make rigorous the intuitive fact that, under assumption \eqref{uniform limit}, $x_N$ is the privileged variable of the solution $(u,v)$: to be precise, by means of the blow-down technology, we will show that independently on the base point $x_0 \in \R^N$ the entire blow-down family converges to the same function $(\gamma x_N^+,\gamma x_N^-)$, with $\gamma>0$. \\
In section \ref{sec: monot at infinity} we will show that, under our assumptions, $\pa_N u(x) >0$ in $\{x_N \gg 1\}$ and $\pa_N v(x)<0$ in $\{x_N \ll 1\}$. This does not follow directly from the results of section \ref{sec: uniqueness}, because the quantitative information given by the convergence of the blow-down family get worse as $R \to +\infty$ (we refer to section \ref{sec: monot at infinity} for more details). \\
In section \ref{sec: monotonicity} we will use the moving planes method to deduce that $\pa_N u >0 $ and $\pa_N v<0 $ in $\R^N$; firstly, by the fact that $\pa_N u > 0$ for $x_N \gg 1$ we will deduce that in the same region $\pa_N v < 0$; this can be done thanks to a version of the maximum principle in unbounded domains, and allow us to start the moving planes method. We point out that it is not possible to proceed separately on $u$ and on $v$ (that is, it is not possible to show that $\pa_N u >0$ and, in a second time, that $\pa_N v<0$ in $\R^N$); this reflects the strongly coupled nature of system \eqref{system}, and introduce a lot of complications with respect to the case of a single equation.\\
In section \ref{sec:symmetry}, we will complete the proof of Theorem \ref{main thm}, passing from the monotonocity in the $e_N$ direction to the monotonocity in all the directions of the upper hemisphere $\S^{N-1}_+:= \{ \nu \in \S^{N-1}: \langle e_N, \nu\rangle>0 \}$; we will follow the line of reasoning introduced by the first author in \cite{Fa1}, with the obvious complications which come from the fact that we are working with a system and not on a single equation, and that we are dealing with unbounded solutions.\\ 
Finally, in section \ref{sec:corollary} we will give the proof of Corollary \ref{main corol}; to be precise, we will show that under \eqref{alg growth} and \eqref{limit at infinity}, the assumption \eqref{uniform limit} is satisfied, so that Corollary \ref{main corol} follows from our main theorem.\\
We reported some known results in the appendix at the end of the paper; this appendix can be considered as an easy-to-read introduction to the study of system \eqref{system}. 
 
\section{Preliminary results}\label{sec:refined}

In \cite{Wa}, the author introduced an Alt-Caffarelli-Friedman monotonicity formula for solutions of \eqref{system} (we reported it in the appendix). This formula gives a lower bound for some integral quantities related to solutions having linear growth (cf. the results of section 4 of \cite{Wa}). In this section we prove some new results and we refine some estimates of the quoted paper, in order to use them in the next sections.

\medskip

In Corollary 4.5 of \cite{Wa}, the author used the linear growth of the solution $(u,v)$ to obtain a lower bound for the growth of the function
\[
r \mapsto \int_{\pa B_r(0)} u^2+v^2.
\]
We think that it is interesting to note that an equivalent estimate holds true assuming only that $(u,v)$ has algebraic growth. Clearly, this requires some extra-work.

\begin{corollary}\label{stima inferiore di crescita universale}
Let $(u,v)$ be a solution of \eqref{system} satisfying \eqref{alg growth}. There exists $C>0$ such that
\[
\int_{B_r(0)} u^2+v^2 \ge C r^{N+2} \qquad \forall r \ge 1.
\]
\end{corollary}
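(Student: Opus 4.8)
The plan is to derive a lower bound for $\int_{B_r} u^2+v^2$ via the Almgren monotonicity formula, combined with the Alt-Caffarelli-Friedman (ACF) monotonicity formula of Wang, which together force a sufficiently fast lower growth rate for $H(0,r)$. The key point is that, although the algebraic growth condition \eqref{alg growth} only gives an \emph{upper} bound with some (possibly large) exponent $p$, the monotonicity formulae will provide a \emph{matching} polynomial lower bound that is \emph{independent} of $p$ and pinned exactly at the exponent $N+2$, which is the one coming from a linear harmonic-type profile.

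First I would recall that, thanks to the Almgren monotonicity formula (which I may use from the appendix), the frequency function $r \mapsto N(0,r)$ is monotone non-decreasing, and that $\frac{d}{dr} \log H(0,r) = \frac{2 N(0,r)}{r}$ (up to the standard correction terms in the present setting). Integrating this identity between two radii $r_1 < r_2$ gives
\[
\frac{H(0,r_2)}{H(0,r_1)} \ge \left( \frac{r_2}{r_1} \right)^{2 N(0,r_1)},
\]
so that a lower bound of the form $\int_{B_r} u^2+v^2 \ge C r^{N+2}$ for large $r$ will follow once we show that the frequency $N(0,r)$ is bounded below by $1$ for $r$ large enough (recall $\int_{B_r} u^2+v^2 = \int_0^r s^{N-1} H(0,s)\,ds$ up to the obvious rescaling, so $H(0,r)\gtrsim r^2$ yields the claim). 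The crux is therefore: \emph{show $\liminf_{r\to\infty} N(0,r) \ge 1$}, equivalently that the frequency cannot stabilize below $1$.

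This is where the ACF monotonicity formula enters, and where I expect the main obstacle to lie. Suppose, for contradiction, that $N(0,r) \to \ell < 1$ as $r \to \infty$. Then the blow-down family $(u_{0,R}, v_{0,R})$ converges (along subsequences, by the compactness results recalled in the appendix) to a nontrivial homogeneous limit of degree $\ell < 1$ which is a solution of a limiting (segregated) problem; but a globally $\alpha$-Hölder nontrivial solution with $\alpha = \ell < 1$ is excluded — this is exactly the non-existence statement of \cite{NoTaTeVe} quoted in the introduction, or alternatively can be ruled out directly because the ACF formula forces the relevant product functional $J(0,r)$ to be monotone and its positivity is incompatible with a sub-linear homogeneity. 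The real work is to make this rigorous without assuming linear growth: one must control the error terms in the Almgren formula (the $u^2v^2$ contributions and the terms distinguishing our setting from the harmonic case) uniformly in $r$, using only \eqref{alg growth}, and check that the blow-down limit is genuinely nontrivial, i.e. that $H(0,R)$ does not decay — but $H(0,R)$ is non-decreasing in $R$ up to the monotonicity correction, and $H(0,1)>0$ since $(u,v)$ is a positive solution, so nontriviality is automatic. Assembling these pieces — Almgren monotonicity, the frequency lower bound $\ell \ge 1$ obtained by excluding sub-linear blow-down limits, and the integration of the logarithmic derivative of $H$ — yields the desired estimate $\int_{B_r} u^2 + v^2 \ge C r^{N+2}$ for all $r \ge 1$ (the passage from "large $r$" to "all $r \ge 1$" being a harmless adjustment of the constant, since the integrand is positive and continuous). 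The genuinely delicate part, as anticipated, is the uniform control of the monotonicity-formula remainders under the mere algebraic (rather than linear) growth hypothesis, which is presumably the "extra-work" the authors allude to just before the statement.
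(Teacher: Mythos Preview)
Your approach is genuinely different from the paper's, and it contains a real gap.

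The paper does not go through the Almgren frequency at all. It argues by contradiction: if $\int_{B_{r_n}(0)} u^2+v^2 \le \eps_n r_n^{N+2}$ along some sequence, one first checks $r_n\to\infty$, and then uses the Alt--Caffarelli--Friedman monotonicity formula (Theorem~\ref{ACF}) to get $J(0,r)\ge C>0$ for all $r\ge 1$. Hence at least one of the two factors, say $j_u(r_n)$, is bounded below. Wang's estimate \eqref{4.11 by Wang} then gives
\[
0<C\le j_u(r_n)\le \frac{C}{r_n^{N+2}}\int_{B_{2r_n}(0)} u^2 \le C\eps_n \to 0,
\]
a contradiction. No frequency bound, no blow-down.

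Your route, by contrast, tries to deduce $H(0,r)\gtrsim r^2$ from a lower bound on $N(0,r)$. The gap is that you conflate ``$\liminf_{r\to\infty} N(0,r)\ge 1$'' with ``$N(0,r)\ge 1$ for $r$ large''. The blow-down theorem (Theorem~\ref{thm blow down}) indeed gives that $d:=\lim_{r\to\infty} N(0,r)$ is a positive integer, hence $d\ge 1$. But when $d=1$ --- precisely the case of linear growth that matters most here --- the frequency increases from $N(0,0^+)=0$ towards $1$ and may satisfy $N(0,r)<1$ for \emph{every} finite $r$. Your integration of $\frac{d}{dr}\log H(0,r)=\frac{2N(0,r)}{r}$ then only yields $H(0,r)\ge C_\eps\, r^{2(1-\eps)}$ for every $\eps>0$ (this is exactly Corollary~\ref{cor blow-down}), which falls short of $H(0,r)\ge C r^2$ by an arbitrarily small but nonzero power. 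The ``extra work'' the authors mention is not the control of monotonicity remainders, as you guessed, but precisely this missing endpoint, and the ACF functional $J$ is what closes it: its almost-monotonicity pins the growth rate at exactly $r^{N+2}$ without ever requiring $N(0,r)$ to reach $1$ at a finite radius.
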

\begin{proof}
Assume by contradiction that the statement is not true: there exists $\eps_n \to 0$ and $(r_n) \subset [1,+\infty)$ such that
\begin{equation}\label{eq26}
\int_{B_{r_n}(0)} u^2+v^2 \le \eps_n r_n^{N+2}.
\end{equation}
\paragraph{Step 1)} \emph{$\displaystyle \liminf_{n \to \infty} r_n= +\infty$}.\\
If not, up to a subsequence $r_n \to \bar r \ge 1$. By the dominated convergence theorem, we deduce
\[
\int_{B_{\bar r}(0)} u^2+v^2 = 0 \quad  \Rightarrow \quad (u,v) \equiv (0,0),
\]
a contradiction.
\paragraph{Step 2)} \emph{Conclusion of the proof}.\\
To simplify the notation, we denote by $j_u(r)$ the quantity
\[
\frac{1}{r^2} \int_{B_r(0)} \frac{|\nabla u(y)|^2 + u^2(y) v^2(y)}{|y|^{N-2}}\,dy,
\]
and by $j_v(r)$ the same quantity for the component $v$. Now, by Theorem \ref{ACF} there exists $C>0$ such that $J(0,r) \ge C$ for every $r \ge 1$, that is, $j_u(r) j_v(r) \ge C$ for every $r \ge 1$. In particular, this holds true for every $r_n$. Up to a subsequence, we can assume $j_u(r_n) \ge C$ for every $n$. By means of \eqref{4.11 by Wang} (we remark that the constant appearing is independent on $r$) plus our absurd assumption \eqref{eq26}, we obtain
\[
0<C \le j_u(r_n) \le \frac{C}{r_n^{N+2}} \int_{B_{2r_n}(0)} u^2 \le  C \eps_n \to 0 
\]
as $n \to \infty$, a contradiction.
\end{proof}

Under the linear growth assumption of $(u,v)$, that is, there exists $C>0$ such that
\begin{equation}\label{linear growth}
u(x)+v(x) \le C(1 +|x|) \qquad \forall x \in \R^N,
\end{equation}
we obtain a uniform (in both $x \in \R^N$ and $r \ge 1$) lower bound for the values $\{H(x,r)\}$.

\begin{lemma}\label{uniform growth for H near u=v}
Let $(u,v)$ be a solution of \eqref{system} with linear growth. There exists $\bar C_1>0$ such that
\[
H(x,r) \ge \bar C_1 
\]
for every $x \in \R^N$ and $r \ge 1$. 
\end{lemma}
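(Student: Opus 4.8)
\emph{Proof strategy.}
The plan is to reduce the statement to a lower bound for $H(x,\cdot)$ at the single scale $r=1$, to obtain a \emph{uniform} lower bound for $H(x,\cdot)$ at large scales from Corollary \ref{stima inferiore di crescita universale}, and then to transfer the latter down to the unit scale by means of the Almgren monotonicity. The starting point is the monotonicity of $r\mapsto H(x,r)$: a direct computation (integrating the equations of \eqref{system} against $u$, resp. $v$, over $B_r(x)$, and differentiating the definition of $H$) gives
\[
\frac{\partial H}{\partial r}(x,r)=\frac{2}{r^{N-1}}\int_{B_r(x)}|\nabla u|^2+|\nabla v|^2+2u^2v^2\ \ge\ 0 ,
\]
a fact recalled, together with the Almgren monotonicity, in the appendix. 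Hence $H(x,r)\ge H(x,1)$ for every $r\ge1$, and it suffices to produce a constant $\bar C_1>0$ with $H(x,1)\ge\bar C_1$ for all $x\in\R^N$.

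Next I would use Corollary \ref{stima inferiore di crescita universale} to bound $H$ from below at large scales, uniformly in $x$. Since, again by the monotonicity of $H$, one has $\int_{B_R(x)}u^2+v^2=\int_0^R s^{N-1}H(x,s)\,ds\le\frac{R^N}{N}H(x,R)$, and since $B_{R-|x|}(0)\subset B_R(x)$, for every $R\ge|x|+1$ Corollary \ref{stima inferiore di crescita universale} yields
\[
H(x,R)\ \ge\ \frac{N}{R^N}\int_{B_{R-|x|}(0)}u^2+v^2\ \ge\ NC\,\frac{(R-|x|)^{N+2}}{R^N},
\]
so that $\liminf_{R\to\infty}H(x,R)/R^2\ge NC$ uniformly in $x$.

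It then remains to transfer this information from $R=\infty$ down to $R=1$, and for this I would invoke the monotonicity of the Almgren frequency $N(x,\cdot)$ (appendix): because $(u,v)$ has \emph{linear} growth and, by Corollary \ref{stima inferiore di crescita universale}, grows exactly of degree one, $\lim_{r\to\infty}N(x,r)=1$, whence $N(x,r)\le1$ for all $r$. Writing $\partial_r\log H(x,r)=\frac{2N(x,r)}{r}+\frac{2}{r^{N-1}H(x,r)}\int_{B_r(x)}u^2v^2$, the first term is harmonic-like (bounded by $2/r$), so integrating from $1$ to $R$ and letting $R\to\infty$ gives
\[
H(x,1)\ \ge\ e^{-G(x)}\,\liminf_{R\to\infty}\frac{H(x,R)}{R^2}\ \ge\ NC\,e^{-G(x)},\qquad G(x):=\int_1^{\infty}\frac{2}{r^{N-1}H(x,r)}\int_{B_r(x)}u^2v^2\,dr .
\]
The whole point is therefore to bound $G(x)$ \emph{uniformly in $x$}: for $r\gtrsim|x|$ the integrand decays like $r^{-2}$ because the interaction term $\int_{B_r(x)}u^2v^2$ is subcritical (the blow-down at $x$ is segregated), while for $1\le r\lesssim|x|$ one must invoke the refined estimates of Section \ref{sec:refined} to keep $\int_{B_r(x)}u^2v^2$ under control relative to $H(x,r)$.

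I expect this last step — the uniform bound on the contribution of the interaction term in the transfer — to be the main obstacle, and it is precisely here that the linear growth assumption, and not merely \eqref{alg growth}, enters. A more computational but essentially equivalent route, which I would fall back on if the bookkeeping above became unwieldy, is to re-run the proof of Corollary \ref{stima inferiore di crescita universale} with base point an arbitrary $x$ and all constants independent of $x$: the estimate \eqref{4.11 by Wang} is already center-free, the case of bounded $x$ is handled by continuity and strict positivity of $(x,r)\mapsto H(x,r)$ on compact sets, and the uniform doubling of $r\mapsto\int_{B_r(x)}u^2+v^2$ together with the uniform positivity of the ACF functional $J(x,r_0)$ at a fixed scale $r_0$ both follow from the uniform frequency bound $N(x,\cdot)\le1$ guaranteed by linear growth.
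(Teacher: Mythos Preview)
Your overall strategy matches the paper's: reduce to $r=1$ via the monotonicity of $H$, obtain a large-scale lower bound from Corollary~\ref{stima inferiore di crescita universale}, and transfer it down using the frequency bound $N(x,\cdot)\le 1$ (which holds by Lemma~\ref{from growth to N} under linear growth). The paper packages this as a contradiction argument, but the ingredients are the same.

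The issue is that you have manufactured an obstacle where there is none. You split $\partial_r\log H(x,r)=\tfrac{2N(x,r)}{r}+\tfrac{2}{r^{N-1}H(x,r)}\int_{B_r(x)}u^2v^2$ and then worry about bounding the interaction integral $G(x)$ uniformly in~$x$. But the doubling estimate of Corollary~\ref{doubling} already absorbs this term: with $N(x,\cdot)\le 1$ it gives directly
\[
H(x,R)\ \le\ e\,H(x,1)\,R^{2}\qquad\text{for all }R\ge 1,
\]
so that $H(x,1)\ge e^{-1}\,H(x,R)/R^{2}$. Combining this with your own large-scale bound $\liminf_{R\to\infty}H(x,R)/R^{2}\ge NC$ (which is fine: the threshold in $R$ depends on $x$ but the limiting value does not) yields $H(x,1)\ge NC/e$ uniformly, and you are done. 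No separate control of $G(x)$, no appeal to refined estimates from Section~\ref{sec:refined}, and no detour through the ACF functional is needed. This is exactly how the paper proceeds: in its contradiction argument it invokes Corollary~\ref{doubling} to get $\int_{\partial B_s(x_i)}u^2+v^2\le e\bigl(\int_{\partial B_1(x_i)}u^2+v^2\bigr)s^{N+1}$, integrates in $s$, and compares with the lower bound from Corollary~\ref{stima inferiore di crescita universale}.

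Your fallback route (rerun Corollary~\ref{stima inferiore di crescita universale} with arbitrary center) would also work, but the claim there that ``uniform positivity of $J(x,r_0)$ at a fixed scale follows from $N(x,\cdot)\le 1$'' is not justified and is in fact not needed once you use Corollary~\ref{doubling}.
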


\begin{proof}
By the monotonicity of $H(x,\cdot)$, it is sufficient to show that $H(x,1) \ge C$ with $C$ independent on $x \in \R^N$. By contradiction, assume that there exists $(x_i) \subset \R^N$ such that
\begin{equation}\label{abs1}
\lim_{i \to \infty} \int_{\pa B_1(x_i)} u^2+v^2 = 0.
\end{equation}
By Corollary \ref{stima inferiore di crescita universale}, we know that there exists $C>0$ such that 
\[
 \int_{B_r(0)} u^2+v^2 \ge C r^{N+2} \qquad \forall r \ge 1,
 \]
Let $r \ge 1$; for every $i$ we have
\begin{equation}\label{eq21}
 \int_{B_{r+|x_i|}(x_i)} u^2 +v^2 \ge  \int_{B_r(0)} u^2 +v^2 \ge C r^{N+2}. 
\end{equation}
Note that
\[
 \int_{B_{r+|x_i|}(x_i)} u^2 +v^2  = \int_{B_{r+|x_i|}(x_i) \setminus B_{1}(x_i)} u^2 +v^2 + \int_{B_{1}(x_i)} u^2 +v^2;
 \]
thanks to Lemma \ref{from growth to N} we know that $N(x_i,r) \le 1$ for every $r \ge 1$, for every $i$. Hence, by means of Corollary \ref{doubling}, we deduce
\begin{align*}
\int_{B_{r+|x_i|}(x_i) \setminus B_{1}(x_i)} u^2 +v^2 &= \int_{1}^{r+|x_i|} \left(\int_{\pa B_s(x_i)} u^2+v^2 \right) \,ds \le e \left(\int_{\pa B_1(x_i)} u^2+v^2 \right) \int_{1}^{r+|x_i|} s^{N+1}\,ds \\
& \le e \left(\int_{\pa B_1(x_i)} u^2+v^2 \right) (r+|x_i|)^{N+2}.
\end{align*}
Therefore
\begin{equation}\label{eq22}
 \int_{B_{r+|x_i|}(x_i)} u^2 +v^2 \le e \left(\int_{\pa B_1(x_i)} u^2+v^2 \right) (r+|x_i|)^{N+2} + \int_{B_{1}(x_i)} u^2 +v^2.
\end{equation}
We observe that from the linear growth of $(u,v)$ it follows also
\[
\int_{B_{1}(x_i)} u^2 +v^2 \le C (1+|x_i|)^2,
\]
where $C$ does not depend on $i$. Plugging into the \eqref{eq22} and choosing $r=r_i \ge |x_i|$, $r_i \to +\infty$ as $i \to \infty$ (here $i$ is fixed, so this choice is possible), we deduce
\[
\begin{split}
\int_{B_{r_i+|x_i|}(x_i)} u^2 +v^2 & \le  e \left(\int_{\pa B_1(x_i)} u^2+v^2 \right) (r_i+|x_i|)^{N+2} + C \left(1+|x_i|^2\right) \\
& \le C \left(\int_{\pa B_1(x_i)} u^2+v^2 \right) r_i^{N+2} + C (1+ r_i^2).
\end{split}
\]
A comparison with \eqref{eq21} yields
\[
C r_i^{N+2} \le C \left(\int_{\pa B_1(x_i)} u^2+v^2 \right) r_i^{N+2} + C r_i^2.
\]
Dividing for $r_i^{N+2}$ and passing to the limit as $i \to \infty$, we finally obtain a contradiction:
\[
0<C \le C \lim_{i \to \infty} \int_{\pa B_1(x_i)} u^2+v^2 =0,
\]
where we used our absurd assumption, equation \eqref{abs1}.
\end{proof}

Where $|u-v|$ is not too large, it is natural to expect that this provides a lower bound on the integrals of both $u^2$ and $v^2$. To be precise:

\begin{lemma}\label{Wa corretto}
Let $(u,v)$ be a solution of \eqref{system} having linear growth. For every $C_1 < \sqrt{\bar C_1 | \S^{N-1}|}$     (where $\bar C_1$ has been defined in Lemma \ref{uniform growth for H near u=v}) there exists $\bar C_2>0$ such that 
\[
\int_{\pa B_1(x_0)} u^2 \ge \bar C_2 \quad \text{and} \quad \int_{\pa B_1(x_0)} v^2 \ge \bar C_2
\]
for every $x_0 \in \{|u-v| < C_1\}$.
\end{lemma}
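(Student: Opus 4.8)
The plan is to argue by contradiction, in the same spirit as the proof of Lemma \ref{uniform growth for H near u=v}. Suppose the conclusion fails for $u$ (the argument for $v$ is symmetric, or follows by exchanging the roles of $u$ and $v$): then there is a sequence $(x_i) \subset \{|u-v| < C_1\}$ with $\int_{\pa B_1(x_i)} u^2 \to 0$. I would first record the two pieces of information we are allowed to use: by Lemma \ref{uniform growth for H near u=v}, $H(x_i,1) = \int_{\pa B_1(x_i)} u^2 + v^2 \ge \bar C_1 |\S^{N-1}|$ for all $i$ (recall $H$ is normalized by $r^{N-1}$ and here $r=1$), so since $\int_{\pa B_1(x_i)} u^2 \to 0$ we get $\liminf_i \int_{\pa B_1(x_i)} v^2 \ge \bar C_1 |\S^{N-1}| > C_1^2 |\S^{N-1}|$ by the choice of $C_1$. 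In particular $\int_{\pa B_1(x_i)} v^2$ is bounded below by a positive constant, while $\int_{\pa B_1(x_i)} u^2$ is small; this is the quantitative form of "$u$ is small but $v$ is not on $\pa B_1(x_i)$".

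Next I would turn the pointwise smallness of $u - v$ into an $L^2$ statement on the sphere: on $\pa B_1(x_i)$ we have $|u - v| < C_1$, hence $\int_{\pa B_1(x_i)} (u-v)^2 \le C_1^2 |\S^{N-1}|$. Combining with $\int_{\pa B_1(x_i)} u^2 \to 0$ and the elementary inequality $\left( \int v^2 \right)^{1/2} \le \left( \int (v-u)^2 \right)^{1/2} + \left( \int u^2 \right)^{1/2}$, we obtain $\limsup_i \int_{\pa B_1(x_i)} v^2 \le C_1^2 |\S^{N-1}|$. But this contradicts the lower bound $\liminf_i \int_{\pa B_1(x_i)} v^2 \ge \bar C_1|\S^{N-1}| > C_1^2|\S^{N-1}|$ derived in the previous step. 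This contradiction proves $\int_{\pa B_1(x_0)} u^2 \ge \bar C_2$ for some $\bar C_2 > 0$ and all $x_0 \in \{|u-v| < C_1\}$; exchanging $u$ and $v$ gives the same bound for $\int_{\pa B_1(x_0)} v^2$ (note the hypotheses and the set $\{|u-v| < C_1\}$ are symmetric in $u,v$). Taking the smaller of the two constants yields the statement. One should finally remark that the bound is uniform in $x_0$ precisely because Lemma \ref{uniform growth for H near u=v} is uniform in $x$.

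The only genuinely delicate point is the quantitative gap: the strict inequality $C_1 < \sqrt{\bar C_1 |\S^{N-1}|}$ is exactly what makes the two estimates $\int_{\pa B_1(x_i)} v^2 \gtrsim \bar C_1 |\S^{N-1}|$ and $\int_{\pa B_1(x_i)} v^2 \lesssim C_1^2 |\S^{N-1}|$ incompatible, so one has to be careful to track the constants (in particular the factor $|\S^{N-1}|$ coming from $\int_{\pa B_1} 1$ and the normalization of $H$). There is no compactness or elliptic-regularity input needed here beyond what Lemma \ref{uniform growth for H near u=v} already provides; the argument is essentially a triangle-inequality bookkeeping on the sphere $\pa B_1(x_i)$, so I expect the write-up to be short.
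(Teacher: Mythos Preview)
There is a genuine gap in your argument. The hypothesis $x_i \in \{|u-v| < C_1\}$ means only that $|u(x_i) - v(x_i)| < C_1$ at the \emph{center} $x_i$; it does \emph{not} say anything about $|u-v|$ on the sphere $\pa B_1(x_i)$. Hence your key step ``on $\pa B_1(x_i)$ we have $|u-v|<C_1$, hence $\int_{\pa B_1(x_i)} (u-v)^2 \le C_1^2 |\S^{N-1}|$'' is unjustified, and the triangle-inequality bookkeeping collapses. (There is also a constant slip: with the paper's normalization $H(x,1)=\int_{\pa B_1(x)} u^2+v^2$, Lemma~\ref{uniform growth for H near u=v} gives $\int_{\pa B_1(x_i)} u^2+v^2 \ge \bar C_1$, not $\bar C_1|\S^{N-1}|$; but this is secondary to the main error.)

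Because the only pointwise information available is at the center $x_i$, the paper's proof is forced to use a compactness/limiting argument rather than a direct sphere estimate: one rescales $(u_i,v_i)(x)=H(x_i,1)^{-1/2}(u,v)(x_i+x)$, uses the doubling property and subharmonicity to get local $L^\infty$ bounds, and then passes to a limit $(u_\infty,v_\infty)$ either by elliptic estimates (if $H(x_i,1)$ stays bounded) or by the local segregation theorem (if $H(x_i,1)\to+\infty$). In the limit, $u_\infty\equiv 0$ forces $v_\infty$ to be harmonic and nonnegative in $B_1(0)$, while the pointwise condition $|u(x_i)-v(x_i)|<C_1$ together with $H(x_i,1)\ge \bar C_1$ pins down $v_\infty(0)$ in a way that is incompatible with the normalization $\int_{\pa B_1(0)} v_\infty^2 = 1$. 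So some elliptic/compactness input is unavoidable here; the short triangle-inequality route does not go through.
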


\begin{proof}
Without loss of generality, we can assume by contradiction that, for a sequence $(x_i) \subset \{|u-v|<C_1\}$, we have
\[
\lim_{i \to \infty} \int_{\pa B_1(x_i)} u^2= 0.
\]
We claim that under this assumption
\[
\lim_{i \to \infty} \int_{\pa B_1(x_i)} v^2= 0.
\]
If not, up to a subsequence there exists $\delta>0$ such that \( \lim_{i} \int_{\pa B_1(x_i)} v^2 \ge \delta^2\). We introduce the sequence
\[
\left(u_i(x), v_i(x)\right) = \left(\frac{1}{\sqrt{H(x_i,1)}} u(x_i+x), \frac{1}{\sqrt{H(x_i,1)}} v(x_i+x) \right).
\]
Note that $\int_{\pa B_1(0)} u_i^2+v_i^2=1$ for every $i$. Each $(u_i,v_i)$ solves
\[
\begin{cases}
-\Delta u_i = H(x_i,1) u_i v_i^2 & \text{in $\R^N$} \\
- \Delta v_i = H(x_i,1) u_i^2 v_i & \text{in $\R^N$};
\end{cases}
\]
By Corollary \ref{doubling} (which we can apply, see Remark \ref{rem su scaling per N}), we deduce that 
\begin{equation}\label{eq19}
\int_{\pa B_r(0)} u_i^2+v_i^2 \le e r^{N+1} \qquad \forall r, \ \forall i.
\end{equation}
As $u_i$ and $v_i$ are subharmonic, the \eqref{eq19} gives a uniform bound on the $L^\infty(B_{r/2}(0))$ norm of the family $\{(u_i,v_i)\}$, for every $r \ge 1$. Now, we have to distinguish between
\begin{itemize}
\item[($i$)] the sequence $\{H(x_i,1)\}$ is bounded.  
\item[($ii$)] the sequence $\{H(x_i,1)\}$ is unbounded.  
\end{itemize}
\medskip

In case ($i$), up to a subsequence $H(x_i,1) \to H_\infty$. Also, $\{u_i\}, \{v_i\}, \{\Delta u_i\}, \{\Delta v_i\}$ are uniformly bounded in every compact subset $K$ of $\R^N$. By standard gradient estimates for elliptic equations (see \cite{GiTr}) we deduce that $\{ \nabla u_i\}, \{ \nabla v_i\}$ are uniformly locally bounded in $\R^N$, so that up to a subsequence $(u_i,v_i) \to (u_\infty,v_\infty)$ in $\mathcal{C}^2_{loc}(\R^N)$ (to pass from the uniform convergence to the $\mathcal{C}^2$ convergence, we refer to the regularity theory for elliptic equations, e.g. \cite{GiTr}). From the absurd assumption and our normalization it follows
\begin{equation}\label{eq20}
\int_{\pa B_1(0)} u_\infty^2=0 \quad \text{and} \quad \int_{\pa B_1(0)} v_\infty^2 =1.
\end{equation}
Moreover, $u_\infty$ and $v_\infty$ are subharmonic and nonnegative. This implies $u_\infty \equiv 0$ in $B_1(0)$, which in turns yields (apply the strong maximum principle) $u_\infty \equiv 0$ in $\R^N$. Hence, $v_\infty$ is harmonic  and nonnegative in $\R^N$ (this follows by the $\mathcal{C}^2$ convergence): by the Liouville theorem for harmonic functions, $v_\infty \equiv const$. Now, since $x_i \in \{|u-v| < C_1\}$ with $C_1 < \sqrt{\bar C_1 | \S^{N-1}|}$, and in light of Lemma \ref{uniform growth for H near u=v}, we deduce
\[
v_\infty(0)= \lim_{i \to \infty} \left(\frac{1}{\sqrt{H(x_i,1)}} | v(x_i)-u(x_i)| + u_i(0) \right) \le \lim_{i \to \infty}  \left(\frac{1}{\sqrt{\bar C_1}} C_1 + u_i(0) \right)    < \sqrt{|\S^{N-1}|}.
\]
But since $v_\infty$ is constant and \eqref{eq20} holds true, necessarily $v_\infty(0)^2 |\S^{N-1}|=1$, a contradiction.

\medskip

In case ($ii$), up to a subsequence $H(x_i,1) \to +\infty$ as $i \to \infty$. Due to the fact the $\{(u_i,v_i)\}$ is uniformly bounded in every compact subset of $\R^N$, we are in position to apply Theorems \ref{local Holder estimate} and \ref{segregation thm}: for every $K \subset \subset \R^N$, the sequence $\{(u_i,v_i)\}$ is uniformly bounded in $\mathcal{C}^{0,\alpha}(K)$ for every $\alpha \in (0,1)$, and, up to a subsequence, $(u_i,v_i) \to (u_\infty,v_\infty)$ in $\mathcal{C}^0(K) \cap H^1(K)$, where $u_\infty-v_\infty$ is harmonic, $u_\infty$ and $v_\infty$ are subharmonic and \eqref{eq20} holds true. As in the previous case, by subharmonicity, nonnegativity, and the fact that $\int_{\pa B_1(0)} u_\infty^2=0$ we deduce $u_\infty \equiv 0$ in $B_1(0)$. So, $v_\infty$ is nonnegative and harmonic in $B_1(0)$; moreover,
\[
v_\infty(0)= \lim_{i \to \infty} \left(\frac{1}{\sqrt{H(x_i,1)}} | v(x_i)-u(x_i)| + u_i(0) \right) \le \lim_{i \to \infty}  \left(\frac{1}{\sqrt{H(x_i,1)}} C_1 + u_i(0) \right)    = 0;
\]
this implies $v_\infty \equiv 0$ in $B_1(0)$, and gives a contradiction with \eqref{eq20}.

\medskip

We proved that if $\int_{\pa B_1(x_i)} u^2 \to 0$, then $H(x_i,1) \to 0$ as $i \to \infty$. But this is contradiction with Lemma \ref{uniform growth for H near u=v}.
\end{proof}

\begin{remark}\label{def di bar C_3}
From now on we will denote as $\bar C_3$ a fixed positive constant strictly smaller then \\
$\sqrt{\bar C_1 |\S^{N-1}|}$.
\end{remark}

Let's come back to the Alt-Caffarelli-Friedman monotonicity formula, see Theorem \ref{ACF}. In some cases it is possible to get rid of the dependence of the constant $C(x_0)$ on $x_0$. This is the purpose of the following general result, which holds true for solutions with arbitrary algebraic growth and allows $x_0$ to vary in a set of full measure.

\begin{proposition}\label{corol 4.8}
Let $(u,v)$ be a solution of \eqref{system} satisfying \eqref{alg growth}. Assume that 
\begin{equation}\label{eq29}
\int_{\pa B_1(x_0)} u^2 \ge C_1 \quad \text{and} \quad \int_{\pa B_1(x_0)} v^2 \ge C_1 \quad \forall x_0 \in \{|u-v|< \delta\},
\end{equation}
where $C_1,\delta>0$. Then there exists $C_2>0$ such that
\[
r \mapsto e^{-C_2 r^{-1/2}} J(x_0,r) \quad \text{is nondecreasing in $r$}
\]
for every $r \ge 1$, for every $x_0 \in \{|u-v|< \delta\}$.
\end{proposition}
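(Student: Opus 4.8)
The proof rests on the Alt--Caffarelli--Friedman monotonicity formula of Theorem \ref{ACF}, which provides, for each $x_0 \in \R^N$, a constant $C(x_0)>0$ such that $r \mapsto e^{-C(x_0)r^{-1/2}}J(x_0,r)$ is nondecreasing on $[1,+\infty)$; the whole point of the Proposition is that on $\{|u-v|<\delta\}$ this constant may be taken independent of $x_0$. I would begin with the elementary remark that enlarging the constant preserves the conclusion: since $J(x_0,\cdot)\ge 0$ is locally absolutely continuous, the monotonicity of $e^{-C(x_0)r^{-1/2}}J(x_0,r)$ is equivalent to $\tfrac{1}{2}C(x_0)r^{-3/2}J(x_0,r)+J'(x_0,r)\ge 0$ a.e., and for any $C_2\ge C(x_0)$ one then has $\tfrac{1}{2}C_2 r^{-3/2}J(x_0,r)+J'(x_0,r)\ge 0$ as well, i.e. $r\mapsto e^{-C_2 r^{-1/2}}J(x_0,r)$ is nondecreasing. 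Hence it suffices to prove that $\sup\{C(x_0): x_0\in\{|u-v|<\delta\}\}<+\infty$.

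To this end I would go back into the proof of Theorem \ref{ACF} (following Wang) and isolate the dependence of $C(x_0)$ on the base point. The monotonicity of $e^{-C(x_0)r^{-1/2}}J(x_0,r)$ amounts to a differential inequality $\frac{d}{dr}\log J(x_0,r)\ge -\tfrac{1}{2}C(x_0)r^{-3/2}$ for a.e. $r\ge 1$, and in the derivation the right-hand side is produced by the coupling terms $uv^2,\,u^2v$ together with the failure of the traces of $u$ and $v$ on $\partial B_r(x_0)$ to be exactly segregated; concretely $C(x_0)$ can be written as a universal multiple of $\sup_{r\ge 1}\Theta(x_0,r)$, where $\Theta(x_0,r)$ is a scale-invariant defect built from ratios of the weighted interaction energy $\int_{B_r(x_0)}u^2v^2|y-x_0|^{2-N}\,dy$ to the weighted Dirichlet energies $\int_{B_r(x_0)}|\nabla u|^2|y-x_0|^{2-N}\,dy$, $\int_{B_r(x_0)}|\nabla v|^2|y-x_0|^{2-N}\,dy$, renormalized through $\int_{\partial B_r(x_0)}u^2$ and $\int_{\partial B_r(x_0)}v^2$. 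Thus the task reduces to bounding $\Theta(x_0,r)$ by a constant uniform in both $r\ge 1$ and $x_0\in\{|u-v|<\delta\}$.

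This uniform bound I would obtain by combining \eqref{eq29} with the algebraic-growth machinery of Section \ref{sec:refined} and the appendix. On the one hand, hypothesis \eqref{eq29} is exactly what controls the renormalizing denominators: for $x_0\in\{|u-v|<\delta\}$ one has $\int_{\partial B_1(x_0)}u^2\ge C_1$, $\int_{\partial B_1(x_0)}v^2\ge C_1$, hence $H(x_0,1)\ge 2C_1$, and by the monotonicity of the one-component analogues of $H(x_0,\cdot)$ (subharmonicity of $u$ and $v$) these lower bounds propagate to every scale $r\ge 1$; together with the equations they also provide a lower bound, uniform over $\{|u-v|<\delta\}$, for the weighted Dirichlet energies at unit scale, and hence at all scales. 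On the other hand, \eqref{alg growth} bounds the numerators from above after the natural normalization by $H(x_0,1)$: the frequency estimate $N(x_0,r)\le p$ for $r\ge 1$ (Lemma \ref{from growth to N}) and the doubling Corollary \ref{doubling} give $H(x_0,r)\le P(r)H(x_0,1)$ and $E(x_0,r)=N(x_0,r)H(x_0,r)\le P(r)H(x_0,1)$ for a polynomial $P=P(N,p)$, whence $\int_{B_r(x_0)}u^2v^2\le P(r)H(x_0,1)$ and likewise for its weighted version; dividing by $H(x_0,1)$ and using $1/H(x_0,1)\le 1/(2C_1)$ makes all the ratios entering $\Theta(x_0,r)$ bounded by an $x_0$-independent polynomial in $r$. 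The step that really needs Wang's finer analysis, rather than this crude bookkeeping, is that these polynomial bounds must be offset by the genuine decay of the interaction ratio — obtained by testing $-\Delta u=-uv^2$, $-\Delta v=-u^2v$ with suitable functions and integrating by parts — so that $\sup_{r\ge 1}\Theta(x_0,r)<+\infty$; the content is that this decay mechanism goes through with constants depending only on $C_1,\delta,N,p$, which is where \eqref{eq29} is used once more. Collecting the estimates yields $C(x_0)\le C_2$ on $\{|u-v|<\delta\}$, and the first paragraph concludes.

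The main obstacle is therefore not the final assembling but the need to revisit the proof of the Alt--Caffarelli--Friedman formula with explicit constants, verifying that every occurrence of $C(x_0)$, once normalized by $H(x_0,1)$, is governed solely by $C_1$, $\delta$, $N$ and $p$, and that the coupling-induced error terms retain their summable $r^{-3/2}$ decay uniformly in $x_0$. A softer alternative, in the spirit of the proof of Lemma \ref{Wa corretto}, is a compactness/contradiction argument: if $C(x_{0,i})\to+\infty$ along some $x_{0,i}\in\{|u-v|<\delta\}$, translate to the origin, normalize by $H(x_{0,i},1)$ (bounded below by $2C_1$ by \eqref{eq29}), and pass to the limit using the doubling estimate and \eqref{alg growth}, distinguishing whether $\{H(x_{0,i},1)\}$ stays bounded — the limit being a bounded entire solution of the system — or diverges — the limit being a segregated pair of nonnegative subharmonic functions with harmonic difference; in both cases the ACF formula holds with a finite constant (zero in the segregated case, by the classical Alt--Caffarelli--Friedman inequality), and the convergence of the weighted energies forces $C(x_{0,i})$ to remain bounded, a contradiction. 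The price of this route is upgrading the merely local convergence of the rescaled pairs to control of $\Theta(x_{0,i},r)$ for large $r$, which again one closes using the algebraic growth.
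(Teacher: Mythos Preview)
Your strategy --- reopen Wang's proof and bound the constant $C(x_0)$ uniformly --- is the right one, and is exactly what the paper does. But the mechanism you describe is not the one that actually works, and your version has a genuine gap.

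The paper does not try to bound separately ``numerators'' and ``denominators'' of some defect $\Theta(x_0,r)$. Instead it uses the explicit differential inequality
\[
\frac{d}{dr}\log J(x_0,r)\ \ge\ -\frac{4}{r}\ +\ \frac{2}{r}\Big[\Gamma(\Lambda_1(x_0,r))+\Gamma(\Lambda_2(x_0,r))\Big],
\]
with $\Lambda_1,\Lambda_2$ the usual spherical Rayleigh quotients of $u,v$ on $\partial B_r(x_0)$, and then invokes Wang's spectral lemma (Lemma~4.2 in \cite{Wa}) to get $\Gamma(\Lambda_1)+\Gamma(\Lambda_2)\ge 2 - Cr^{-1/2}$ with a \emph{uniform} $C$. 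The single hypothesis that lemma needs, beyond a lower bound on the competition parameter (which follows from \eqref{eq29} and the monotonicity of $r\mapsto r^{1-N}\int_{\partial B_r}u^2$), is a two-sided bound on the ratio
\[
\frac{\int_{\partial B_r(x_0)} u^2}{\int_{\partial B_r(x_0)} v^2}
\]
that is \emph{uniform in both $x_0\in\{|u-v|<\delta\}$ and $r\ge 1$}. This is the paper's Step~1, proved by a blow-up/contradiction argument in which the assumption $|u(x_0)-v(x_0)|<\delta$ is used at the very end: after normalization and passage to the limit, it forces the surviving limit component to vanish at the origin, contradicting the normalization on $\partial B_1$.

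Your bookkeeping cannot produce this ratio bound. From \eqref{eq29} you do get $r^{1-N}\!\int_{\partial B_r(x_0)}u^2\ge C_1$ and likewise for $v$, but the only upper bound you have is $r^{1-N}\!\int_{\partial B_r(x_0)}u^2\le H(x_0,r)\le e^p r^{2p}H(x_0,1)$ from Corollary~\ref{doubling}; the resulting ratio bound depends on $r$ polynomially and on $H(x_0,1)$, neither of which you control. A ratio bound that blows up with $r$ is useless for Wang's lemma. Nowhere in your argument do you actually use the pointwise information $|u(x_0)-v(x_0)|<\delta$; that is precisely the missing ingredient, and it is what the paper exploits to rule out the degeneration of the ratio at large scales.

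Your alternative compactness argument at the end is at the wrong level: even if you could show that the translated/normalized pairs converge, you would still need to control the ratio (or the defect $\Theta$) \emph{uniformly in $r$} along the sequence, which is exactly the difficulty you have not resolved. The paper runs compactness not on $C(x_0)$ but directly on the ratio at a putative bad pair $(x_i,r_i)$, and the condition $|u(x_i)-v(x_i)|<\delta$ (together with $H(x_i,r_i)\to\infty$, forced by \eqref{eq29}) is what closes the contradiction.
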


\begin{proof}[Proof (cf. proof of Theorem 4.3 and the observation before Corollary 4.8  in \cite{Wa})]
For any \\
$x_0 \in \{|u-v|<\delta\}$ and $r \ge 1$, we denote
\[
\left(\bar u_{x_0,r}(x), \bar v_{x_0,r}(x) \right) = \left( u(x_0+rx), v(x_0+rx) \right) \qquad \text{with } x \in \pa B_1(0).
\] 
As in the proof of Lemma 2.5 of \cite{NoTaTeVe}, it results
\begin{equation}\label{eq in ACF}
\frac{d}{dr} \log J(x_0,r) \ge -\frac{4}{r} + \frac{2}{r} \left[\Gamma \left( \Lambda_1(x_0,r) \right) + \Gamma \left( \Lambda_2(x_0,r) \right) \right],
\end{equation}
where $\Gamma(t) = \sqrt{\left(\frac{N-2}{2}\right)^2+t } - \left( \frac{N-2}{2} \right)$,
\begin{align*}
\Lambda_1(x_0,r)&= \frac{r^2 \int_{\pa B_r(x_0)} |\nabla_\theta u|^2+u^2 v^2 }{\int_{\pa B_r(x_0)} u^2} = \frac{ \int_{\pa B_1(0)} |\nabla_\theta \bar u_{x_0,r}|^2+r^2 \bar u_{x_0,r}^2 \bar v_{x_0,r}^2 }{\int_{\pa B_1(0)} \bar u_{x_0,r}^2} \\
\Lambda_2(x,r)&= \frac{r^2 \int_{\pa B_r(x_0)} |\nabla_\theta v|^2+u^2 v^2 }{\int_{\pa B_r(x_0)} v^2} = \frac{ \int_{\pa B_1(0)} |\nabla_\theta \bar v_{x_0,r}|^2+r^2 \bar u_{x_0,r}^2 \bar v_{x_0,r}^2 }{\int_{\pa B_1(0)} \bar v_{x_0,r}^2},
\end{align*}
and $|\nabla_\theta u|^2= |\nabla u|^2- (\pa_\nu u)^2$.
\paragraph{Step 1)} \emph{There exist $\tilde C_1,\tilde C_2>0$ such that 
\[
\tilde C_1 \le \frac{\int_{\pa B_1(0)} \bar u_{x_0,r}^2}{\int_{\pa B_1(0)} \bar v_{x_0,r}^2}\le \tilde C_2 
\]
for every $x_0 \in\{|u-v| <\delta\}$ and $r \ge 1$}.\\
By contradiction, there are sequences $(x_i) \subset \{|u-v|<\delta\}$ and $(r_i) \subset [1,+\infty)$ such that
\[
\lim_{i \to \infty} \frac{\int_{\pa B_1(0)} \bar u_{x_i,r_i}^2}{\int_{\pa B_1(0)} \bar v_{x_i,r_i}^2}=+\infty
\]
(if the limit were $0$ we can argue in a similar way). By assumption \eqref{eq29}, we have
\[
\frac{\int_{\pa B_1(0)} \bar u_{x_i,r_i}^2}{\int_{\pa B_1(0)} \bar v_{x_i,r_i}^2} \le \frac{\int_{\pa B_1(0)} \bar u_{x_i,r_i}^2   }{C_1}.
\]
Consequently,  $\int_{\pa B_1(0)} \bar u_{x_i,r_i}^2 \to +\infty$ as $i \to \infty$, which in turns implies $H(x_i,r_i) \to +\infty$ as $i \to \infty$. Note that
\begin{equation}\label{eq2inACF}
\frac{\int_{\pa B_1(0)} \bar u_{x_i,r_i}^2}{\int_{\pa B_1(0)} \bar v_{x_i,r_i}^2} = \frac{\int_{\pa B_1(0)} u_{x_i,r_i}^2}{\int_{\pa B_1(0)} v_{x_i,r_i}^2} \quad \Rightarrow \quad  \lim_{i \to \infty} \frac{\int_{\pa B_1(0)} u_{x_i,r_i}^2}{\int_{\pa B_1(0)} v_{x_i,r_i}^2} =+\infty
\end{equation}
where we recall that the notation $(u_{x,r},v_{x,r})$ has been introduced in \eqref{def blow-down}. We set $(u_i,v_i):=(u_{x_i,r_i}, v_{x_i,r_i})$. By definition
\[
\begin{cases}
-\Delta u_i= -H(x_i,r_i) r_i^2 u_i v_i^2 & \text{in $\R^N$}\\
-\Delta v_i= -H(x_i,r_i) r_i^2 u_i^2 v_i  & \text{in $\R^N$}. 
\end{cases}
\]
and
\begin{equation}\label{eq3inACF}
\int_{\pa B_1(0)} u_i^2+v_i^2 = 1,
\end{equation}
which, by means of Corollary \ref{doubling}, provides a uniform-in-$i$ bound on $\int_{\pa B_r(0)} u_i^2+v_i^2$ for every $r \ge 1$. In light of the subharmonicity of $(u_i,v_i)$ this yields a uniform-in-$i$ bound on the $L^\infty$ norm of $\{(u_i,v_i)\}$ in every compact set of $\R^N$. As the competition parameter tends to $+infty$, we are in position to apply the local segregation Theorem \ref{segregation thm}, deducing that up to a subsequence $(u_i,v_i) \to (u_\infty,v_\infty)$ in $\mathcal{C}^0_{loc}(\R^N)$, where $u_\infty-v_\infty$ is harmonic and both $u_\infty$ and $v_\infty$ are subharmonic. By \eqref{eq2inACF}
\[
\int_{\pa B_1(0)} v_\infty^2=   \lim_{i \to +\infty} \int_{\pa B_1(0)} v_i^2 = \lim_{i \to \infty}  \frac{\int_{\pa B_1(0)} v_i^2   }{\int_{\pa B_1(0)} u_i^2+v_i^2 }=0.
\]
As $v_\infty$ is subharmonic and nonnegative, $v_\infty \equiv 0$. This implies that $u_\infty$ is harmonic and nonnegative in $B_1(0)$. Also, from \eqref{eq3inACF} it follows $\int_{\pa B_1(0)} u_\infty^2=1$. On the other hand, since $x_i \in \{|u-v|<\delta\}$ and $H(x_i,r_i) \to +\infty$ it results 
\[
u_\infty(0)=\lim_{i \to \infty}  \left( \frac{1}{\sqrt{H(x_i,r_i)}} |u(x_i)-v(x_i)| + v_i(0) \right) = 0  
\]
and by the strong maximum principle we obtain $u_\infty \equiv 0$, a contradiction.
\paragraph{Step 2)} \emph{Conclusion of the proof}.\\
 For $x_0 \in \{|u-v|<\bar \delta \}$ and $r \ge 1$, we consider the functions
\[
\tilde u_{x_0,r}(y):= \frac{\bar u_{x_0,r}(y)}{\left(\int_{\pa B_1(0)} \bar u_{x_0,r}^2\right)^{\frac{1}{2}} } \quad \text{and} \quad \tilde v_{x_0,r}(y):= \frac{\bar v_{x_0,r}(y)}{\left(\int_{\pa B_1(0)} \bar u_{x_0,r}^2\right)^{\frac{1}{2}} },
\] 
which are obtained by $\bar u_{x_0,r}$ and $\bar v_{x_0,r}$ after a normalization with respect to the $L^2$ norm of $\bar u_{x_0,r}$ on $\pa B_1(0)$. In light of assumption \eqref{eq29}
\begin{align*}
\Lambda_1(x_0,r) &= \int_{\pa B_1(0)} |\nabla_\theta \tilde u_{x_0,r}|^2 + r^2 \left(\int_{\pa B_1(0)} \bar u_{x_0,r}^2\right) \tilde{u}_{x_0,r}^2 \tilde{v}_{x_0,r}^2 \ge \int_{\pa B_1(0)} |\nabla_\theta \tilde u_{x_0,r}|^2 + C_1 r^2 \tilde{u}_{x_0,r}^2 \tilde{v}_{x_0,r}^2 \\
\Lambda_1(x_0,r) &= \int_{\pa B_1(0)} |\nabla_\theta \tilde v_{x_0,r}|^2 + r^2 \left(\int_{\pa B_1(0)} \bar u_{x_0,r}^2\right) \tilde{u}_{x_0,r}^2 \tilde{v}_{x_0,r}^2 \ge \int_{\pa B_1(0)} |\nabla_\theta \tilde v_{x_0,r}|^2 + C_1 r^2 \tilde{u}_{x_0,r}^2 \tilde{v}_{x_0,r}^2.
\end{align*}
As $\Gamma$ is monotone nondecreasing, we deduce
\begin{multline*}
\Gamma \left( \Lambda_1(x_0,r) \right) + \Gamma \left( \Lambda_2(x_0,r) \right)  \\
\ge \Gamma\left(\int_{\pa B_1(0)} |\nabla_\theta \tilde u_{x_0,r}|^2 + C_1 r^2 \tilde{u}_{x_0,r}^2 \tilde{v}_{x_0,r}^2\right) + \Gamma\left(\int_{\pa B_1(0)} |\nabla_\theta \tilde v_{x_0,r}|^2 + C_1 r^2 \tilde{u}_{x_0,r}^2 \tilde{v}_{x_0,r}^2 \right) .
\end{multline*}
Thanks to the first step, we are in position to apply Lemma 4.2 in \cite{Wa} in order to obtain
\[
\Gamma \left( \Lambda_1(x_0,r) \right) + \Gamma \left( \Lambda_2(x_0,r) \right) \ge 2- \frac{C}{ r^{\frac{1}{2}}},
\]
where $C$ is a positive constant independent on $x_0 \in \{|u-v|<\delta \}$ and $r \ge 1$. Coming back to \eqref{eq in ACF}, we deduce that there exists $C>0$ such that
\[
\frac{d}{dr}\log J(x_0,r) \ge - C  r^{-\frac{3}{2}}
\]
for every $x_0 \in \{|u-v|< \delta\}$, for every $r \ge 1$. An integration gives the desired result.
\end{proof}

In light of Lemma \ref{Wa corretto}, if $(u,v)$ is a solution of \eqref{system} having linear growth then Proposition \ref{corol 4.8} holds true. By means of this uniform monotonicity formula, we deduce the following statement.

\begin{corollary}\label{corol 4.10 in Wa}
Let $(u,v)$ be a solution of \eqref{system} having linear growth. Then there exists $\bar C_4>0$ such that
\begin{equation}\label{eq30}
\frac{1}{\bar C_4} \le J(x_0,r) \le \bar C_4, \qquad \int_{\pa B_1(x_0)} u^2+v^2 \le \bar C_4,
\end{equation}
and 
\[
\sup_{x \in B_R(x_0)} u(x)+v(x) \le \bar C_4(1+R)
\]
for every $x_0 \in \{|u-v|< \bar C_3\}$ and $r \ge 1$ (where $\bar C_3$ has been defined Remark \ref{def di bar C_3}).
\end{corollary}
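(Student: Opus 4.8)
\emph{Proof plan.} The plan is to deduce the whole statement from the uniform monotonicity formula of Proposition \ref{corol 4.8}, which applies here: by Lemma \ref{Wa corretto} with $C_1=\bar C_3<\sqrt{\bar C_1|\S^{N-1}|}$, assumption \eqref{eq29} holds on $\{|u-v|<\bar C_3\}$, so there is $C_2>0$ with $r\mapsto e^{-C_2 r^{-1/2}}J(x_0,r)$ nondecreasing on $[1,+\infty)$ for every $x_0\in\{|u-v|<\bar C_3\}$. As in the proof of Corollary \ref{stima inferiore di crescita universale} write $j_u(x_0,r)=\frac{1}{r^2}\int_{B_r(x_0)}\frac{|\nabla u|^2+u^2v^2}{|x_0-y|^{N-2}}\,dy$ and similarly $j_v$, so that $J(x_0,r)=j_u(x_0,r)j_v(x_0,r)$. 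The guiding idea is that linear growth gives \emph{uniform} control of $J(x_0,r)$ only at the far scales $r\gtrsim|x_0|$, where the $|x_0|$–dependence of the constant $C(1+|x|)$ is absorbed, and the monotonicity formula then transports this control down to $r=1$ and hence to every $r\ge1$.

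For the upper bound on $J$: by \eqref{4.11 by Wang}, $j_u(x_0,r)\le C r^{-(N+2)}\int_{B_{2r}(x_0)}u^2$ with $C$ universal; if $r\ge\max\{|x_0|,1\}$ then $|y|\le 3r$ on $B_{2r}(x_0)$, so $\int_{B_{2r}(x_0)}u^2\le C r^{N+2}$ by linear growth, hence $j_u(x_0,r)\le\bar M$ and likewise $j_v(x_0,r)\le\bar M$, so $J(x_0,r)\le\bar M^2$ for all such $r$. Since $e^{-C_2 r^{-1/2}}J(x_0,r)$ is nondecreasing and bounded by $\bar M^2$ at large scales, $J(x_0,r)\le e^{C_2}\bar M^2$ for every $r\ge1$, uniformly in $x_0$. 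For the lower bound on $J$ it suffices to bound $J(x_0,1)$ from below uniformly, since then the monotonicity formula gives $J(x_0,r)\ge e^{-C_2}J(x_0,1)$ for $r\ge1$. If this failed there would be $x_i\in\{|u-v|<\bar C_3\}$ with, say, $j_u(x_i,1)\to0$; normalizing $(u_i,v_i)=(u(x_i+\cdot),v(x_i+\cdot))/\sqrt{H(x_i,1)}$ and arguing as in the proofs of Lemma \ref{Wa corretto} and Proposition \ref{corol 4.8} (two cases according to whether $H(x_i,1)$ is bounded — pass to a $\mathcal C^2_{loc}$ limit — or unbounded — use the segregation Theorem \ref{segregation thm}), the vanishing of $j_u(x_i,1)$ forces $\nabla u_\infty\equiv0$ on $B_1(0)$; using $|u-v|<\bar C_3$ to see $(u_i-v_i)(0)\to0$ in the unbounded case, the limit is degenerate and contradicts either the normalization $\int_{\partial B_1(0)}(u_i^2+v_i^2)=1$ together with Liouville, or Lemma \ref{Wa corretto} itself. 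Hence $\inf_{\{|u-v|<\bar C_3\}}J(x_0,1)>0$, giving the lower bound in \eqref{eq30}.

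For $\int_{\partial B_1(x_0)}(u^2+v^2)\le\bar C_4$: assume $H(x_i,1)\to+\infty$ along $x_i\in\{|u-v|<\bar C_3\}$. The normalized $(u_i,v_i)$ satisfy $\int_{\partial B_1(0)}(u_i^2+v_i^2)=1$, a uniform $L^\infty_{loc}$ bound (Corollary \ref{doubling} plus subharmonicity), uniform local energy bounds (the Almgren frequency equals $N(x_i,1)\le1$), and have exploding competition parameter $H(x_i,1)$, so by Theorem \ref{segregation thm} (up to a subsequence) $(u_i,v_i)\to(u_\infty,v_\infty)$ in $\mathcal C^0_{loc}\cap H^1_{loc}$, with $w_\infty:=u_\infty-v_\infty$ harmonic and $u_\infty,v_\infty\ge0$ subharmonic; passing the doubling estimate to the limit shows $w_\infty$ has Almgren frequency $\le1$, so $w_\infty$ is affine, and since $w_\infty(0)=\lim(u(x_i)-v(x_i))/\sqrt{H(x_i,1)}=0$ it is linear, $w_\infty=\langle a,\cdot\rangle$, whence $(u_\infty,v_\infty)=(\langle a,\cdot\rangle^+,\langle a,\cdot\rangle^-)$ with $a\ne0$ (otherwise $(u_\infty,v_\infty)\equiv(0,0)$, against the normalization). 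Writing $j_u(x_i,1)=H(x_i,1)\big(\int_{B_1(0)}|z|^{2-N}|\nabla u_i|^2\,dz+H(x_i,1)\int_{B_1(0)}|z|^{2-N}u_i^2v_i^2\,dz\big)$, the parenthesis tends to $\int_{B_1(0)}|z|^{2-N}|\nabla u_\infty|^2\,dz>0$ (strong $L^2_{loc}$ convergence of the gradients, and uniform smallness of the weighted Dirichlet integral near the origin, via the local energy bounds), so $j_u(x_i,1)\to+\infty$; similarly $j_v(x_i,1)\to+\infty$, hence $J(x_i,1)\to+\infty$, contradicting the upper bound just proved. After enlarging $\bar C_4$ this yields $\int_{\partial B_1(x_0)}(u^2+v^2)\le\bar C_4$ on $\{|u-v|<\bar C_3\}$.

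Finally, the growth estimate: from $H(x_0,1)\le\bar C_4$ and doubling (frequency $\le1$ at scales $\ge1$ gives $H(x_0,r)\le e\bar C_4 r^2$, hence $\int_{B_{2R}(x_0)}(u^2+v^2)\le C\bar C_4 R^{N+2}$ for $R\ge1$), together with the correctly scaled local elliptic estimates for the system — which turn an $L^2(B_{2R})$–norm of order $R^{(N+2)/2}$ into an $L^\infty(B_R)$–bound of order $R$ — one gets $\sup_{B_R(x_0)}(u+v)\le\bar C_4(1+R)$ for $R\ge1$; for $R\le1$ the bound follows at once from $\int_{\partial B_1(x_0)}(u^2+v^2)\le\bar C_4$ and the subharmonicity of $u^2,v^2$. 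The main obstacle throughout is precisely the uniformity over the \emph{unbounded} set $\{|u-v|<\bar C_3\}$: one can never feed in the pointwise linear growth at the unit scale (its constant would depend on $|x_0|$), so each estimate must be produced at the scales $r\gtrsim|x_0|$ and then carried back to scale $1$ through Proposition \ref{corol 4.8}; likewise, recovering the \emph{linear} rate in the last estimate — instead of a spurious $(1+R)^{(N+1)/2}$ coming from mere subharmonicity — is what forces the use of the scale-invariant elliptic estimates for solutions of \eqref{system}.
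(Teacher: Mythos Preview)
Your argument is correct and follows essentially the same route as the paper (which in turn defers to Wang's Corollary~4.9): obtain the two-sided bound on $J(x_0,r)$ from the uniform ACF formula of Proposition~\ref{corol 4.8} --- the upper bound via \eqref{4.11 by Wang} at scales $r\gtrsim|x_0|$ transported down by monotonicity, the lower bound and the bound on $H(x_0,1)$ by the same normalization/compactness dichotomy used in Lemma~\ref{Wa corretto} and Proposition~\ref{corol 4.8} --- and then deduce the pointwise linear bound from $H(x_0,1)\le\bar C_4$ plus doubling.

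One small simplification for the last step: the concern about a spurious $(1+R)^{(N+1)/2}$ is unfounded and you do not need ``scale-invariant elliptic estimates for the system.'' The argument the paper alludes to (``Corollary~\ref{doubling} and the subharmonicity of $u$ and $v$'') is exactly the Poisson-kernel trick already written out in the proof of Corollary~\ref{from N to growth}: compare $u$ with its harmonic majorant $\varphi$ on $B_{2R}(x_0)$, use $|x-y|\ge R$ in the Poisson kernel for $x\in B_R(x_0)$ and $y\in\partial B_{2R}(x_0)$, and Cauchy--Schwarz on the sphere together with $H(x_0,2R)\le 4e\,\bar C_4 R^2$ to get $u(x)\le C\sqrt{H(x_0,2R)}\le CR$. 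This is ``mere subharmonicity'' and already yields the linear rate.
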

\begin{proof}
In light of Proposition \ref{corol 4.8}, it is possible ti adapt the proof of Corollary 4.9 in \cite{Wa} (see also the discussion at the end of the proof) replacing $L_i$ with $\int_{\pa B_1(x_i)} u^2 +v^2$, where for us $(x_i) \subset \{|u-v| < \bar C_3\}$. In the quoted statement it is used the fact that $u(x_i)=v(x_i)$. As in this case $u(x_i) \neq v(x_i)$ in general, we obtain a contradiction with the same argument already used in the proof of Lemma \ref{Wa corretto}. This permits to deduce the existence of $\bar C_4>0$ such that \eqref{eq30} holds. Now, Corollary \ref{doubling} and the subharmonicity of $u$ and $v$ permits to obtain also the pointwise estimate of the thesis.
\end{proof}

\section{Uniqueness of the asymptotic profile}\label{sec: uniqueness}
 
In this section we show that, under assumptions \eqref{alg growth} and \eqref{uniform limit} (in fact it is sufficient to assume much less), any solution to \eqref{system} having algebraic growth is a solution with linear growth. Moreover, we will show that for every $x_0 \in \R^N$, the \emph{entire} blow-down family $\{(u_{x_0,R}, v_{x_0,R}): R >0\}$ converges, as $R \to +\infty$, to the same harmonic function. 

\begin{proposition}\label{uniqueness asymptotic profile thm}
Let $(u,v)$ be a solution of \eqref{system} satisfying assumptions \eqref{alg growth} and such that
\begin{equation}\label{v 0}
\lim_{x_N \to +\infty} v(x',x_N)= 0 \quad \text{uniformly in $x' \in \R^{N-1}$}.
\end{equation}
Then $N(x_0,r) \le 1$ for every $r>0$, and consequently $(u,v)$ has linear growth. Furthermore, there exists a constant $\gamma>0$ such that, for every $x_0 \in \R^N$, the blow-down family $\{(u_{x_0,R}, v_{x_0,R}): R >0\}$ converges to the pair $(\g x_N^+, \g x_N^-)$ as $R \to +\infty$, in $\mathcal{C}^0_{loc}(\R^N)$ and in $H^1_{loc}(\R^N)$.
\end{proposition}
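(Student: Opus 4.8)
The plan is to split the statement into three parts: (a) the bound $N(x_0,r)\le 1$ for all $r>0$; (b) the consequent linear growth; (c) the identification of the asymptotic profile as $(\gamma x_N^+,\gamma x_N^-)$, independently of $x_0$.

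For part (a): the appendix results (monotonicity of the Almgren quotient $N(x_0,\cdot)$ and the Lemma \texttt{from growth to N} relating growth exponents to $\limsup_{r\to\infty}N(x_0,r)$) reduce the claim to controlling the behaviour of $N(x_0,r)$ as $r\to+\infty$. The key is to rule out $N(x_0,r)>1$ for large $r$; since $N(x_0,\cdot)$ is nondecreasing, it suffices to show $\limsup_{r\to\infty} N(x_0,r)\le 1$. I would argue this via a blow-down argument: by \eqref{alg growth} and the doubling property, along a subsequence $R_n\to\infty$ the blow-down family $(u_{x_0,R_n},v_{x_0,R_n})$ converges in $\mathcal C^0_{loc}\cap H^1_{loc}$ to a pair $(U,V)$ with $U-V$ harmonic, $U,V$ subharmonic, $U\cdot V\equiv 0$ (segregation, since the competition parameter $H(x_0,R_n)R_n^2\to\infty$ — this uses that $H$ is bounded below, Lemma \texttt{uniform growth for H near u=v}, once linear growth is known; to avoid circularity one first works with the weaker statement $N(x_0,r)\le \deg$ coming from \eqref{alg growth} and bootstraps). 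The limiting degree of $(U,V)$ equals $\lim N(x_0,R_n)$, so $(U,V)$ is a homogeneous harmonic "segregated" pair of that degree. Now hypothesis \eqref{v 0}: since $v\to 0$ uniformly as $x_N\to+\infty$ while $u$ grows at most algebraically, the rescaled $V$ must vanish on the whole half-space $\{x_N>0\}$ (the uniform decay survives the rescaling because $R_n\to\infty$ pushes the region $\{x_N>c\}$ to $\{x_N>c/R_n\}$ and $v$ is already small there relative to $\sqrt{H(x_0,R_n)}$). A nonzero subharmonic function vanishing on an open half-space, with $U-V$ harmonic, forces $(U,V)=(\gamma x_N^+,\gamma x_N^-)$ (up to the hemisphere direction), which is $1$-homogeneous, so $\lim N(x_0,R_n)=1$; combined with monotonicity this gives $N(x_0,r)\le 1$ for all $r$.

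Part (b) is then immediate: $N(x_0,r)\le 1$ for all $r$ plus the Almgren doubling estimates (Corollary \texttt{doubling}) yield $H(x_0,r)\le C r^2$, hence by subharmonicity a pointwise bound $u+v\le C(1+|x|)$, i.e.\ \eqref{linear growth}.

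For part (c): once linear growth is established, the sharpened monotonicity formula (Proposition \ref{corol 4.8} and Corollary \ref{corol 4.10 in Wa}) applies with constants uniform in $x_0\in\{|u-v|<\bar C_3\}$, so $J(x_0,r)$ is bounded above and below uniformly, and the whole blow-down \emph{family} (not merely a subsequence) converges: the uniform monotonicity of $e^{-C_2 r^{-1/2}}J(x_0,r)$ together with $N(x_0,\cdot)\le 1$ forces the limit to be $1$-homogeneous, and the argument of part (a) identifies it as $(\gamma_{x_0}\,\langle\nu_{x_0},x\rangle^+,\gamma_{x_0}\,\langle\nu_{x_0},x\rangle^-)$. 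Hypothesis \eqref{v 0} fixes the direction $\nu_{x_0}=e_N$ for every $x_0$ (the component that decays in $x_N$ is $v$, so the $x_N^-$ part must be $V$). It remains to show $\gamma_{x_0}=\gamma$ is independent of $x_0$: this follows because $\gamma_{x_0}^2$ can be read off from a limit of $H(x_0,R)/R^2$ or of $J(x_0,R)$, and a covering/translation argument — comparing blow-downs centered at $x_0$ and at $x_0+h$ for fixed $h$, which differ by a shift of size $|h|/R\to 0$ after rescaling — shows the limit profile cannot depend on $x_0$.

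The main obstacle I expect is the circularity between "$H$ bounded below" (needed for the segregation/blow-down analysis) and "linear growth" (needed for $H$ bounded below via Lemma \ref{uniform growth for H near u=v}): one must first get $N(x_0,r)\le 1$ using only \eqref{alg growth} and the raw doubling estimates, which is delicate because the competition parameter's behaviour along the blow-down is not a priori controlled without a lower bound on $H$. The other genuinely nontrivial point is transferring the \emph{uniform in $x'$} decay of $v$ through the rescaling to conclude $V\equiv 0$ on an entire half-space rather than merely on a ray — this is where hypothesis \eqref{v 0}, as opposed to a pointwise limit, is essential.
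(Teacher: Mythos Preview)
Your overall strategy matches the paper's: apply the blow-down theorem under algebraic growth, use hypothesis \eqref{v 0} to force the limiting $V$ to vanish on $\{x_N>0\}$, deduce the limit is $(\gamma x_N^+,\gamma x_N^-)$ and hence $d_{x_0}=1$. However, you overcomplicate two points where the paper's argument is much shorter.

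First, the circularity you worry about does not exist. The function $r\mapsto H(x_0,r)$ is nondecreasing (appendix), so $H(x_0,R)\ge H(x_0,1)>0$ and the competition parameter $H(x_0,R)R^2\to\infty$ automatically; no lower bound from Lemma~\ref{uniform growth for H near u=v} is needed. The paper simply invokes Theorem~\ref{thm blow down} (valid under algebraic growth alone) to get a subsequence converging to $(\Psi_{x_0}^+,\Psi_{x_0}^-)$ with $\Psi_{x_0}$ a homogeneous harmonic polynomial of degree $d_{x_0}\ge1$, then uses \eqref{v 0} to get $\Psi_{x_0}\ge0$ on $\R^N_+$, and the strong maximum principle plus Hopf's lemma at the origin force $\nabla\Psi_{x_0}(0)\neq0$, hence $\Psi_{x_0}$ is linear.

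Second, your translation/covering argument for the $x_0$-independence of $\gamma$ is unnecessary, and your suggestion that $\gamma_{x_0}^2$ is read off from $\lim H(x_0,R)/R^2$ is not how the paper proceeds (that ratio is only shown to be bounded above and below, not convergent). The paper's observation is that the normalization $\int_{\partial B_1} u_{x_0,R}^2+v_{x_0,R}^2=1$ is built into the definition of the blow-down, so the limit satisfies $\gamma_{x_0}^2\int_{\partial B_1}x_N^2=1$, which pins down $\gamma_{x_0}=\gamma$ independently of $x_0$ in one line. Likewise, for full-family (not subsequential) convergence the paper does not use Proposition~\ref{corol 4.8} or Corollary~\ref{corol 4.10 in Wa}; it uses the standard argument that if some sequence $R_m\to\infty$ stayed $\bar\eps$-away from $(\gamma x_N^+,\gamma x_N^-)$, one could extract a further subsequence and rerun the same identification to reach a contradiction. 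The results from Section~\ref{sec:refined} are reserved for the uniform-in-$x_0$ estimates of Section~\ref{sec: monot at infinity}, not for this proposition.
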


\begin{remark}
It is possible to replace assumption \eqref{v 0} with 
\[
\lim_{x_N \to -\infty} u(x',x_N)= 0 \quad \text{uniformly in $x' \in \R^{N-1}$}.
\]
\end{remark}

\begin{proof}
As $(u,v)$ has algebraic growth, thanks to Lemma \ref{from growth to N} Theorem \ref{thm blow down} applies: for every $x_0 \in \R^N$ there exists
\[
\lim_{r \to +\infty} N(x_0,r)= d_{x_0} \in \N \setminus \{0\},
\]
and there exists a subsequence $(u_{x_0,R_n}, v_{x_0,R_n})$ of the blow-down family which is convergent ( in $\mathcal{C}^0_{loc}(\R^N)$ and in $H^1_{loc}(\R^N)$) to $(\Psi_{x_0}^+,\Psi_{x_0}^-)$, where $\Psi_{x_0}$ is a homogeneous harmonic polynomial of degree $d_{x_0} \ge 1$. As showed in Corollary \ref{cor blow-down}, this implies that \(\lim_{r \to \infty} H(x_0,r) = + \infty\). \\
Now, let $K \subset \subset \R^N_+$. Since
\[
\inf\{x_N: x \in K\} > 0,
\]
in light of assumption \eqref{v 0} there holds
\[
\lim_{R \to +\infty} v_R(x) = 0  \quad \text{uniformly in $K$}.
\]
As $K$ has been arbitrarily chosen, it follows that $v_{x_0,R_n}(x) \to 0$ pointwise in $\R^N_+$. By the uniqueness of the limit, we deduce $\Psi_{x_0}^-=0$ in $\R^N_+$. Thus, $\Psi_{x_0}$ is an homogeneous harmonic polynomial (hence $\Psi_{x_0}(0)=0$) which is nonnegative in $\R^N_+$ and is not identically $0$ (this follows simply from the fact that $d_{x_0} \ge 1$):
\[
\begin{cases}
-\Delta \Psi_{x_0}=0 & \text{in $\R_+^N$} \\
\Psi_{x_0}\ge 0, \ \Psi_{x_0} \not \equiv 0 & \text{in $\R_+^N$} \\
\Psi_{x_0}(0)=0.
\end{cases} 
\]
By the strong maximum principle, we deduce that $\Psi_{x_0}>0$ in $\R^N_+$; hence, the Hopf' Lemma guarantees that $\nabla \Psi_{x_0}(0) \neq 0$. The unique (up to a constant factor) homogeneous harmonic polynomial satisfying these properties is the linear one: $\Psi_{x_0}(x)= C_{x_0} x_N$; but $C_{x_0}>0$ is uniquely determined (independently on $x_0$) by the condition
\[
\int_{\pa B_1(0)} C_{x_0}^2 x_N^2 = \lim_{n \to \infty} \int_{\pa B_1(0)} u_{x_0,R_n}^2 + v_{x_0,R_n}^2 =   1.
\]
Hence, for every $x_0$ the blow-down family converges (up to a subsequence) to the same pair $(\gamma x_N^+,\g x_N^-)$, for a constant $\g>0$. By Theorem \ref{thm blow down}, the fact that the degree of the limiting profile is $1$ means that $d_{x_0}=1$ for every $x_0 \in \R^N$, and this gives the linear growth of $(u,v)$, see Corollary \ref{from N to growth}. \\
It remains to show that, for every $x_0 \in \R^N$, the entire blow-down family converges to $\gamma x_N$. Assume by contradiction that this is not true: there exist a compact $K \subset \R^N$, a $\bar \eps>0$ and a subsequence $\{(u_{x_0,R_m}, v_{x_0,R_m})\}$ with $R_m \to +\infty$ as $m \to \infty$, such that
\begin{multline}\label{entire convergence}
\| u_{x_0,R_m} - \gamma x_N^+\|_{\mathcal{C}^0(K)} + \|u_{x_0,R_m}- \gamma x_N^+\|_{H^1(K)} \\
+ \| v_{x_0,R_m} - \gamma x_N^-\|_{\mathcal{C}^0(K)} + \|v_{x_0,R_m}- \gamma x_N^-\|_{H^1(K)} \ge \bar \eps
\end{multline}
for every $m$. But now it is possible to repeat step by step the proof of Theorem \ref{thm blow down} obtaining that, up to a subsequence, $\{(u_{x_0,R_m}, v_{x_0,R_m})\}$ converges, as $m \to +\infty$ to a homogeneous harmonic polynomial of degree $d_{x_0} \ge 1$. Following the above line of reasoning, we find that the limit is nothing but the function $(\gamma x_N^+,\gamma x_N^-)$, in contradiction with \eqref{entire convergence}.
\end{proof}

\section{Monotonicity at infinity}\label{sec: monot at infinity}

We aim at proving the following statement.
\begin{proposition}\label{monotonicity at infinity}
Let $(u,v)$ be a solution of \eqref{system} satisfying \eqref{alg growth} and \eqref{uniform limit}. For every 
\[
\nu \in \{ \nu \in \S^{N-1}: \langle \nu, e_N \rangle >0 \},
\]
there exists $M_\nu>0$ such that
\[
x \in \{x_N>M_\nu\} \Rightarrow \pa_\nu u(x)>0 \quad \text{and} \quad x \in\{x_N<-M_\nu\} \Rightarrow \pa_\nu v(x)<0.
\]
\end{proposition}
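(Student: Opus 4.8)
The plan is to prove the statement for $u$ in $\{x_N>M_\nu\}$; the one for $v$ in $\{x_N<-M_\nu\}$ follows verbatim after exchanging the roles of $u$ and $v$ and reversing $x_N$. Throughout I use that, by Proposition \ref{uniqueness asymptotic profile thm}, $(u,v)$ has linear growth and, for every base point, its blow-down family converges to $(\g x_N^+,\g x_N^-)$ in $\mathcal C^0_{loc}\cap H^1_{loc}$.

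\emph{Step 1 (exponential decay of the vanishing component).} By \eqref{uniform limit} there is $M_0>0$ with $u\ge 1$ and $v\le 1$ in $\{x_N>M_0\}$, hence $\Delta v=u^2v\ge v$ there. Comparing $v$ with the barrier $e^{-(x_N-M_0)}$ — which is legitimate in the unbounded slab $\{x_N>M_0\}$ after adding the auxiliary functions $\eta\sum_{i<N}\cosh x_i$, annihilated by $\Delta-1$, and letting $\eta\to 0^+$ — gives $v(x)\le e^{-(x_N-M_0)}$ for $x_N>M_0$. Moreover, using \eqref{uniform limit} and the intermediate value theorem in $x_N$ one sees that $\{u=v\}$ lies in a slab $\{|x_N|\le M^*\}$, so Corollary \ref{corol 4.10 in Wa}, applied at the point of $\{u=v\}$ sitting above each $x'$, yields $u(x)\le C(1+x_N)$ for $x_N>M_0$ with $C$ independent of $x'$; hence $0\le\Delta u=uv^2\le Ce^{-x_N}$ in $\{x_N>M_0\}$. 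Symmetrically, $u\le Ce^{x_N}$ and $\Delta v$ is exponentially small in $\{x_N<-M_0\}$.

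\emph{Step 2 ($\mathcal C^1$ blow-down in the open upper half-space).} Fix $x_0=(x_0',0)$. On $\{x_N>\d\}$ the rescaled function $u_{x_0,R}$ solves $-\Delta u_{x_0,R}=-R^2v(x_0+Rx)^2\,u_{x_0,R}$, and by Step 1 the potential $R^2v(x_0+Rx)^2$ is at most $CR^2e^{-2\d R}$, hence tends to $0$ uniformly on $\{x_N\ge\d\}$. Combined with the $\mathcal C^0_{loc}$ convergence $u_{x_0,R}\to\g x_N^+$ and a uniform linear bound on $u_{x_0,R}$ (from Corollary \ref{corol 4.10 in Wa} centred at a nearby point of $\{u=v\}$, together with the lower bound $H(x_0,R)\gtrsim R^2$ coming from Corollaries \ref{stima inferiore di crescita universale}, \ref{corol 4.10 in Wa} and the monotonicity of $H$), interior elliptic estimates promote the convergence to $\mathcal C^1_{loc}(\{x_N>0\})$. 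In particular $\nabla u_{x_0,R}(y)\to\g e_N$ locally uniformly in $\{y_N>0\}$, so $\pa_\nu u_{x_0,R}(y)\to\g\langle\nu,e_N\rangle>0$ there.

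\emph{Step 3 (uniformity in $x_0'$ and conclusion).} The translates $u(\cdot+(x_0',0))$ all satisfy \eqref{alg growth} and \eqref{uniform limit} with the same constants, so Steps 1--2 hold with estimates independent of $x_0'$; I then claim the convergence in Step 2 is uniform in $x_0'\in\R^{N-1}$. If not, there are $x_0'^{(k)}$ and $R_k\to+\infty$ with $u_{(x_0'^{(k)},0),R_k}$ staying $\bar\eps$-away from $\g x_N^+$ on $B_2$; a quantitative form of the blow-down analysis (in which the component of the limiting profile that is not a priori $\g x_N^+$ is controlled by the uniformly exponentially small interaction term $u^2v^2$) forces $1-N((x_0'^{(k)},0),R_k)$ to be bounded below, and then the monotonicity of $r\mapsto N((x_0'^{(k)},0),r)$ (Theorem \ref{thm blow down}) keeps $1-N((x_0'^{(k)},0),r)$ bounded below for all $r\le R_k$, which on letting $k\to+\infty$ (using a compactness argument for the translated family and $N(\cdot,r)\nearrow 1$ for the limiting solution) is a contradiction. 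Granting the claim, there is $R_*=R_*(\nu)$ with $\pa_\nu u_{(x_0',0),R}\big((0,\tfrac32)\big)>0$ for all $x_0'$ and all $R\ge R_*$; unscaling this reads $\pa_\nu u(z)>0$ with $z=(x_0',0)+R(0,\tfrac32)$. Since any $z$ with $z_N\ge 3R_*/2$ is of this form (take $x_0'=z'$ and $R=2z_N/3$), we get $\pa_\nu u>0$ in $\{x_N>M_\nu\}$ with $M_\nu=3R_*/2$, and the symmetric argument gives $\pa_\nu v<0$ in $\{x_N<-M_\nu\}$.

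\emph{Main obstacle.} The delicate point is Step 3. Proposition \ref{uniqueness asymptotic profile thm} only provides, for each fixed base point, convergence as $R\to+\infty$, whereas here one needs a statement that is uniform both in the base point $x_0'\in\R^{N-1}$ and over all large radii — otherwise the conclusion of Step 2, once unscaled, only covers a cone around $e_N$ rather than a half-space. Making this uniform requires a quantitative blow-down: the monotonicity of the Almgren quotient must be used to forbid the deviation from $\g x_N^+$ from reappearing at large scales once it has become small, while the (uniform) exponential smallness of $u^2v^2$ is what selects $\g x_N^+$ among all degree-one harmonic profiles. This, together with organising the exponential-decay estimate of Step 1 and the elliptic upgrade of Step 2 so that all constants are independent of $x_0'$, is the technical heart of the proof.
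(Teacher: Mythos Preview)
Your overall strategy --- exponential decay of the vanishing component, followed by a blow-down argument upgraded to $\mathcal C^1$, followed by uniformity in the base point --- matches the paper's architecture. The paper organises the last two steps somewhat differently (it works with base points in $\{|u-v|<\bar C_3\}$ rather than on $\{x_N=0\}$, uses conical sectors $S_{x_0,R}$ to cover the half-space, and applies gradient estimates for the Poisson equation to $u$ minus an explicit linear function rather than upgrading the blow-down convergence via interior Schauder), but these are reorganisations, not different ideas.

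There is, however, a genuine gap in your Step 3. You claim that if $u_{(x_0'^{(k)},0),R_k}$ stays $\bar\eps$-away from $\g x_N^+$ on $B_2$, then ``a quantitative form of the blow-down analysis \dots\ forces $1-N((x_0'^{(k)},0),R_k)$ to be bounded below''. This implication is false as stated: the Almgren quotient being close to $1$ only says the profile is close to \emph{some} degree-one homogeneous harmonic function, not to $\g x_N^+$ specifically. A subsequence could perfectly well converge to $\langle e,x\rangle^+$ for a tilted direction $e\neq e_N$ while $N\to 1$. The monotonicity-of-$N$ argument you sketch therefore cannot by itself produce the contradiction.

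What actually pins down the direction is the information you already have from Step~1: since $v$ decays uniformly exponentially in $\{x_N>M_0\}$ and the base points stay in a bounded $x_N$-slab (because $\{u=v\}$ does), one has $v_{(x_0'^{(k)},0),R_k}\to 0$ locally uniformly in the open upper half-space. Extracting a convergent subsequence via the segregation theorem gives a limit $(u_\infty,v_\infty)$ with $u_\infty-v_\infty$ harmonic, $N_\infty\equiv 1$, and $v_\infty\equiv 0$ on $\{x_N>0\}$; the last condition forces $u_\infty-v_\infty=Cx_N$, and the normalisation on $\pa B_1$ gives $C=\g$, contradicting the assumed $\bar\eps$-gap. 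This is exactly the paper's Lemma \ref{get rid of x_0}. So your Step~3 is salvageable, but the mechanism you invoke (deviation of $N$ from $1$) is the wrong one; replace it by compactness plus identification of the limiting direction via the vanishing of $v_\infty$ in the upper half-space. A secondary point: the lower bound $H((x_0',0),R)\gtrsim R^2$ that you need in Step~2 does not follow directly from Corollary~\ref{stima inferiore di crescita universale} (which is centred at the origin); you should either move the base point to the nearby point of $\{|u-v|<\bar C_3\}$ as the paper does, or justify the transfer explicitly.
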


The achievement of section \ref{sec: uniqueness} says that $(u,v)$ behaves at infinity as $(\gamma x_N^+, \gamma x_N^-)$; thus, the idea is that $u$ has to be increasing in the $e_N$ direction for $x_N \gg 1$ and $v$ has to be decreasing in the $e_N$ direction for $x_N \ll -1$. In order to prove this conjecture, we wish to apply the standard gradient estimate for the Poisson equation (see e.g. \cite{GiTr}) on $u$ minus "a suitable linear function": this idea is corroborated by the fact that $\Delta u$ can be uniformly bounded by an exponentially decaying function for $x_N$ sufficiently large. An analogous bound holds for $\Delta v$ when $x_N$ is sufficiently large and negative.

\begin{lemma}\label{estimate on the Delta}
Let $(u,v)$ be a solution of \eqref{system} satisfying \eqref{uniform limit}. For every $p,q \ge 1$ there exist $M_1(p,q)>0$ and a positive constant $C=C(p,q)>0$ such that 
\[
u^p(x) v^q(x) \le C e^{-C |x_N|} \qquad \forall x \in \{|x_N|>M_1(p,q)\}.
\]
\end{lemma}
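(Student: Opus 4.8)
The plan is to exploit the two decay facts about $u$ and $v$ separately and then combine them. First I would use assumption \eqref{uniform limit}: since $v(x',x_N) \to 0$ uniformly in $x'$ as $x_N \to +\infty$, for any $\eps>0$ there is $M(\eps)$ with $v(x) < \eps$ whenever $x_N > M(\eps)$; similarly $u(x) < \eps$ whenever $x_N < -M(\eps)$. The idea is that in the half-space $\{x_N \gg 1\}$ the equation $-\Delta u = -uv^2$ becomes, roughly, $\Delta u = u v^2 \le \eps^2 u$, so $u$ is ``almost subharmonic with a small mass term'', while $v$ is genuinely a solution of $-\Delta v = -u^2 v$ with a \emph{large} coefficient $u^2$ for $x_N$ large (since $u \to +\infty$ there). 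This largeness of $u^2$ is exactly what forces $v$ to decay exponentially.

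The key step I would carry out is a barrier/comparison argument for $v$ in $\{x_N > M\}$. Fix a large $M$ such that $u(x) \ge 1$ for $x_N \ge M$ (possible since $u \to +\infty$ uniformly); then on $\{x_N > M\}$ the function $v$ satisfies $-\Delta v + v \le -\Delta v + u^2 v = 0$, i.e. $\Delta v \ge v$. Together with the linear growth of $(u,v)$ (available from Proposition \ref{uniqueness asymptotic profile thm}, hence $v(x) \le C(1+|x|)$), one builds a supersolution of the form $w(x) = A e^{-(x_N - M)}$ (or, to dominate the linear growth coming from the lateral boundary data on $\partial\{x_N>M\}$, a product-type barrier $w(x) = C(1+|x'|^2)^{1/2} e^{-c(x_N-M)}$ with $c<1$ chosen so that $\Delta w \le w$ still holds, using that the $x'$-Laplacian of $(1+|x'|^2)^{1/2}$ grows only polynomially and is absorbed by shrinking $c$). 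Comparing $v$ with $w$ on the slab $\{M < x_N < T\}$ and letting $T \to +\infty$, the linear growth of $v$ kills the contribution at $x_N = T$ and one concludes $v(x) \le C e^{-c x_N}$ for $x_N$ large. The symmetric argument in $\{x_N < -M\}$, using $v \ge 1$ there, gives $u(x) \le C e^{c x_N}$, i.e. $u$ decays exponentially as $x_N \to -\infty$.

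To finish, I would interpolate. For $x_N > M_1(p,q)$, write $u^p v^q = u^p v^{q-1} \cdot v$; the factor $v$ is exponentially small by the step above, while $u^p v^{q-1}$ grows at most polynomially in $|x|$ by the linear growth bound, so the product is bounded by $C |x|^{C} e^{-c x_N} \le C e^{-C' x_N}$ for a (smaller) positive constant $C'$, after absorbing the polynomial factor; and since $x_N > M_1$ we may replace $x_N$ by $|x_N|$. To turn the polynomial-in-$|x|$ bound into a clean polynomial-in-$|x_N|$ bound, I would note that in the region where the estimate matters one can further localize: by the exponential decay of $v$ and a Harnack-type or gradient estimate, $u$ on the slab $\{M < x_N < M+1\}$ centered over a point $x'$ is controlled, but the simplest route is just to keep $C|x|^{C} e^{-c x_N}$ and observe $|x|^C e^{-cx_N} \le C|x_N|^C e^{-c x_N} + (\text{decaying lateral part})$ — in fact the cleanest fix is to prove the exponential bound for $v$ with decay in $x_N$ only and pointwise (no $x'$ dependence), which the product barrier does give after optimizing, so that $u^p v^q \le C(1+|x|^{p'}) e^{-c x_N}$ and then $(1+|x|^{p'})e^{-cx_N/2} \le C$ once restricted to the full half-space via $e^{-cx_N/2}$ dominating any polynomial in $|x|$ on $\{x_N \ge \eta |x'|\}$, handling the remaining ``flat'' region $\{x_N < \eta|x'|\}$ by choosing $M_1$ large and using that there $v$ is still exponentially small in $x_N$.

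The main obstacle is precisely this last point: the uniform-in-$x'$ hypotheses give exponential decay of $v$ \emph{in the variable $x_N$} but the linear-growth factor $u^{p}v^{q-1}$ grows in $|x'|$, so one must make sure the exponential gain in $x_N$ genuinely beats a polynomial in the full variable $|x| \sim |x'|$ uniformly over the half-space. The way to overcome it is to prove the $v$-decay estimate with a barrier of product type $C(1+|x'|^2)^{k} e^{-c x_N}$ that already incorporates the lateral polynomial growth (so the comparison constant is uniform), and then the interpolation step produces a bound of the form $C(1+|x'|)^{m} e^{-c x_N}$, which is $\le C e^{-(c/2)|x_N|}$ only after one more observation: restricting attention to $\{x_N > M_1\}$ with $M_1$ chosen depending on $p,q$ and using the uniform limit $v \to 0$ once more to get, for each fixed slab, a bound with no $x'$ growth at all (the uniform limit says $\sup_{x'} v(x',x_N) \to 0$, and subharmonicity-type interior estimates upgrade this to $\sup_{x'} u^pv^q(x',x_N) \to 0$ geometrically). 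That monotone-in-$x_N$ quantity decaying to $0$ uniformly in $x'$, combined with a one-dimensional iteration/doubling in $x_N$, yields the clean statement $u^p v^q \le C e^{-C|x_N|}$ for $|x_N|>M_1(p,q)$.
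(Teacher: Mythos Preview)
Your barrier argument for $v$ is actually cleaner than you realize: by \eqref{uniform limit} alone, $v$ is \emph{uniformly bounded} on $\{x_N>M\}$ for $M$ large (indeed $\sup_{x'}v(x',M)<\infty$), so the simple barrier $w=A e^{-(x_N-M)}$ with $A=\sup_{x'}v(x',M)$ already works via Lemma~\ref{max_principle unbounded}, with no need for a product-type supersolution. The difficulty you correctly identify is entirely in the factor $u^p$, and here your proposal has a genuine gap that is not closed by any of the fixes you sketch. From linear growth you only get $u(x)\le C(1+|x|)$, which grows in $|x'|$; since your $v$-decay is only in $x_N$, the product $u^p v^q \le C(1+|x|)^p e^{-cqx_N}$ is \emph{not} bounded by $Ce^{-C|x_N|}$ on the flat part $\{x_N>M,\ |x'|\gg x_N\}$ of the half-space. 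Your suggestions (restricting to cones, ``iteration/doubling in $x_N$'', product barriers) do not produce a uniform-in-$x'$ bound on $u$, and there is no mechanism in what you wrote that couples the size of $u$ back into the decay rate of $v$. Note also that the Lemma assumes only \eqref{uniform limit}, not \eqref{alg growth}, so invoking Proposition~\ref{uniqueness asymptotic profile thm} for linear growth imports an extra hypothesis.

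The paper's proof avoids this obstacle by working \emph{locally}: on each ball $B_x:=B_{x_N/4}(x)$ one sets $K_x:=\inf_{B_x}u$, applies the exponential decay Lemma~\ref{lemma exponential decay} to get $v(x)\lesssim \delta\, e^{-C K_x x_N}$ (so the decay rate carries $K_x$, not a fixed constant), and applies Harnack to get $u(x)\lesssim e^{C\delta x_N}K_x$. Then
\[
u^p(x)v^q(x)\ \lesssim\ K_x^{\,p}\,\delta^q\, e^{-C_1 q K_x x_N + C_2 p \delta x_N},
\]
and since $K_x\ge K$ the term $K_x^{\,p}e^{-C_1 q K_x x_N}$ is dominated by $e^{-C' x_N}$; choosing $\delta$ small makes the remaining factor harmless. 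The key idea you are missing is this coupling: the local lower bound $K_x$ on $u$ enters the \emph{exponent} of the $v$-decay, so that the possibly large prefactor $K_x^{\,p}\sim u(x)^p$ is automatically absorbed. A global barrier with a fixed decay rate cannot reproduce this cancellation.
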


\begin{proof}
We consider the bound on $u^p v^q$ in $x_N \gg 1$, the same argument applies for $x_N \ll -1$.\\
Given $K>0$ and $\delta>0$, by \eqref{uniform limit} there exists $M>0$ such that
\[
u(x)>K \quad \text{and} \quad v(x)<\delta \quad \text{if $x \in \{x_N>M/2\}$}.
\]
For every $x \in \{x_N> M\}$ the ball $B_x := B_{x_{N}/4}(x)$ is contained in $\{x_N>M/2\}$. Consequently,
\[
\begin{cases}
u(y) \ge K_x:= \inf_{B_x} u \ge K \\
v(y) \le \delta 
\end{cases} \qquad \forall y \in B_x, \ \forall x \in \{x_N>M\},
\]
so that 
\[
\begin{cases}
-\Delta v \le -K_x^2 v & \text{in $B_x$} \\
v \ge 0 & \text{in $B_x$}  \\
v \le \delta & \text{in $B_x$} .
\end{cases}
\]
We are in position to apply Lemma \ref{lemma exponential decay}: 
\begin{equation}\label{eq5}
\sup_{B_x'} v \le C \delta e^{- C K_x x_N},
\end{equation}
where $B_x'$ denotes the ball $B_{x_N/8}(x)$. On the other hand, it is possible to apply the Harnack inequality (Theorem 8.20 in \cite{GiTr}, see also the subsequent observation concerning the estimate on the constant) on $u$ in $B_x$, with potential $v^2$:
\begin{equation}\label{eq6}
\sup_{B_x} u \le C e^{C \delta x_N} K_x.
\end{equation}
The inequalities \eqref{eq5} and \eqref{eq6} yields
\[
u^p(x) v^q(x) \le C K_x^p \delta^q e^{-C_1 q K_x x_N + C_2 p \delta x_N} \qquad \forall x \in \{x_N>M\}.
\]
A suitable choice of $K \le K_x$ and $\delta$ permits to obtain the desired result.
\end{proof}

\begin{remark}\label{rem su corol 1}
From now on we will denote as $M_1:= \max\{ M_1(1,2), M_1(2,1)\}$, where $M_1(1,2)$ and $M_1(2,1)$ have been defined in Lemma \ref{estimate on the Delta}.
\end{remark}

If we could show that the function $u$ can be approximated in $\{x_N>M_1\}$ by a linear function with positive slope in the $e_N$ direction, the gradient estimates for the Poisson equation would give the desired monotonicity for $u$. So far we showed that for given $x_0 \in \R^N$ and $\eps>0$ there exists $R_{x_0,\eps}>0$ such that
\begin{equation}\label{blow-down base}
\sup_{x \in B_1(0)} |u_{x_0,R}(x)- \gamma x_N^+| + |v_{x_0,R}(x)- \gamma x_N^-| < \eps
\end{equation}
for every $R>R_{x_0,\eps}$. This means that
\[
\sup_{x \in B_R(x_0)} \left|u(x)- \gamma \frac{\sqrt{H(x_0,R)}}{R} (x_N-x_{0,N})^+\right| + \left|v(x)- \gamma \frac{\sqrt{H(x_0,R)}}{R} (x_N-x_{0,N})^- \right| < \sqrt{H(x_0,R)}\eps
\]
whenever $R>R_{x_0,\eps}$. This reveals that we have to face two problems: the first one is the fact that we have not a unique candidate to approximate $u$ for $x_N \gg 1$ and $v$ for $x_N \ll -1$, the second one is that this approximation, which holds for $R$ sufficiently large, get worse as $R$ increases (recall that the function $H(x_0,\cdot)$ is nondecreasing and tends to $+\infty$ as $R \to +\infty$, see Corollary \ref{cor blow-down}). In order to overcome the second problem, we wish to find a uniform estimate (in both $x_0$ and $R$) on the ratio $\frac{\sqrt{H(x_0,R)}}{R}$; in the forthcoming Lemma \ref{control on H/R}, we show that this is possible if $x_0 \in \{|u-v|< \bar C_3\}$, where $\bar C_3$ has been defined in Remark \ref{def di bar C_3}. Before, we deduce some useful information about this special set.

\begin{lemma}\label{about u=v}
Under the assumption \eqref{uniform limit}, the set $\{|u-v|< \bar C_3\}$ is bounded in the $e_N$ direction and unbounded in all the other directions $\{e_1,\ldots, e_{N-1}\}$. In particular, for every $x' \in \R^{N-1}$ there exists $\tilde x \in \{|u-v|<\bar C_3\}$ such that $\tilde x'=x'$. 
\end{lemma}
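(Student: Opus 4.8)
The plan is to read off everything from the divergence of $u-v$ along the $e_N$ direction encoded in \eqref{uniform limit}, so that neither the monotonicity formula nor the blow-down analysis is needed here. First I would establish the boundedness in $e_N$: since $u(x',x_N)\to+\infty$ and $v(x',x_N)\to 0$ uniformly in $x'$ as $x_N\to+\infty$, there is $M^{+}>0$ with $u(x)-v(x)>\bar C_3$ for every $x$ with $x_N>M^{+}$; symmetrically, since $u\to 0$ and $v\to+\infty$ uniformly as $x_N\to-\infty$, there is $M^{-}>0$ with $u(x)-v(x)<-\bar C_3$ for $x_N<-M^{-}$. Setting $M:=\max\{M^{+},M^{-}\}$ gives $\{|u-v|<\bar C_3\}\subset\{|x_N|\le M\}$. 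The only point deserving care is that the limits in \eqref{uniform limit} are \emph{uniform} in $x'$, which is exactly what makes $M$ independent of $x'$ and hence the inclusion global.

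For the ``in particular'' statement, fix $x'\in\R^{N-1}$ and consider the continuous function $g(t):=u(x',t)-v(x',t)$, $t\in\R$ (continuity follows from the smoothness of solutions of \eqref{system}, which is classical by elliptic regularity since the nonlinearities are polynomial). By \eqref{uniform limit} we have $g(t)\to+\infty$ as $t\to+\infty$ and $g(t)\to-\infty$ as $t\to-\infty$, so the intermediate value theorem yields $t_0\in\R$ with $g(t_0)=0$. Then $\tilde x:=(x',t_0)$ satisfies $|u(\tilde x)-v(\tilde x)|=0<\bar C_3$, i.e. $\tilde x\in\{|u-v|<\bar C_3\}$ with $\tilde x'=x'$.

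The unboundedness of $\{|u-v|<\bar C_3\}$ in each of the directions $e_1,\dots,e_{N-1}$ is then an immediate corollary: for $i\in\{1,\dots,N-1\}$ and any $R>0$, choosing $x'\in\R^{N-1}$ whose $i$-th coordinate equals $R$ and applying the previous step produces $\tilde x\in\{|u-v|<\bar C_3\}$ with $\langle\tilde x,e_i\rangle=R$; letting $R\to+\infty$ (and likewise $R\to-\infty$) shows that the projection of the set onto the $e_i$-axis is unbounded. I do not anticipate any genuine obstacle in this lemma: it is essentially a continuity-plus-uniform-limit argument, included only to pin down the geometry of the set $\{|u-v|<\bar C_3\}$ used in the preceding discussion and in the statement of Lemma \ref{control on H/R}.
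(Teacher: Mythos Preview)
Your argument is correct and follows essentially the same route as the paper: both proofs use the uniform limits in \eqref{uniform limit} to see that $u-v\to\pm\infty$ as $x_N\to\pm\infty$ uniformly in $x'$, giving the $e_N$-boundedness, and then apply the intermediate value theorem to the continuous map $t\mapsto u(x',t)-v(x',t)$ to produce the point $\tilde x$ with prescribed $\tilde x'=x'$. Your version is simply a bit more explicit (splitting $M^+$ and $M^-$, and spelling out the unboundedness in the $e_i$ directions from the ``in particular'' part).
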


\begin{proof}
The properties follow easily by our main assumption \eqref{uniform limit}. Indeed, by considering the function $u-v$ one sees that 
\[
\lim_{x_N \to \pm\infty} (u(x',x_N)-v(x',x_N))= \pm \infty,
\]
uniformly in $x' \in \R^{N-1}$. This immediately implies that the level set $\{|u-v| \le M\}$ is bounded in the $e_N$ direction for every $M>0$ (in particular, this holds for $\bar C_3$). On the other hand, for a given $x' \in \R^{N-1}$ we can consider the map $s \in \R \mapsto u(x',s)-v(x',s)$. This is a continuous function which tends to $\pm \infty$ as $s \to \pm \infty$, thus there exist $\tilde s \in \R$ such that $|u(x',\tilde s)-v(x',\tilde s)|<\bar C_3$. 
\end{proof}

\begin{remark}\label{def zeta}
From now on, we denote $\zeta:= \sup\{|x_{0,N}|: x_0 \in \{|u-v|< \bar C_3\} \}<+\infty$.
\end{remark}

In the next Lemma we give uniform upper and lower bounds on the ratio $\frac{\sqrt{H(x_0,R)}}{R}$ for $x_0 \in \{|u-v|<\bar C_3\}$ and $R \ge 1$.
 
\begin{lemma}\label{control on H/R}
Let $(u,v)$ be a solution of \eqref{system} satisfying \eqref{alg growth} and \eqref{uniform limit}. There exists $\bar C_5,\bar C_6>0$ such that
\[
\bar C_5 \le \frac{\sqrt{H(x_0,R)}}{R} \le \bar C_6 
\]
for every $x_0 \in \{|u-v|< \bar C_3\}$ and $R \ge 1$.
\end{lemma}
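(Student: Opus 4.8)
The plan is to establish the two inequalities separately, both resting on a preliminary reduction: since \eqref{uniform limit} forces in particular $v(x',x_N)\to 0$ as $x_N\to+\infty$ uniformly in $x'$, Proposition \ref{uniqueness asymptotic profile thm} applies and $(u,v)$ has \emph{linear} growth. Consequently Corollary \ref{corol 4.10 in Wa} and Lemma \ref{uniform growth for H near u=v} are at our disposal, with all constants uniform over $x_0\in\{|u-v|<\bar C_3\}$. The upper bound is then immediate: for such $x_0$ and $R\ge 1$, the pointwise estimate of Corollary \ref{corol 4.10 in Wa} gives $u+v\le \bar C_4(1+R)\le 3\bar C_4 R$ on $\overline{B_R(x_0)}$, hence $u^2+v^2\le 9\bar C_4^2R^2$ on $\pa B_R(x_0)$; inserting this in the definition of $H$ yields $H(x_0,R)\le 9\bar C_4^2|\S^{N-1}|R^2$, i.e. $\sqrt{H(x_0,R)}/R\le \bar C_6$ with $\bar C_6:=3\bar C_4\sqrt{|\S^{N-1}|}$.

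The lower bound is the substantive part. The naive bound $H(x_0,r)\ge \bar C_1$ for $r\ge 1$ (Lemma \ref{uniform growth for H near u=v}) is of no help, since it disappears after division by $R^2$; instead one must extract the quadratic growth of $H$ from the Alt--Caffarelli--Friedman monotonicity formula. For $x_0\in\{|u-v|<\bar C_3\}$ and $r\ge 1$, set
\[
j_u(x_0,r):=\frac{1}{r^2}\int_{B_r(x_0)}\frac{|\nabla u(y)|^2+u^2(y)v^2(y)}{|x_0-y|^{N-2}}\,dy,\qquad j_v(x_0,r):=\frac{1}{r^2}\int_{B_r(x_0)}\frac{|\nabla v(y)|^2+u^2(y)v^2(y)}{|x_0-y|^{N-2}}\,dy,
\]
so that $J(x_0,r)=j_u(x_0,r)\,j_v(x_0,r)$. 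I would then run the argument of Corollary \ref{stima inferiore di crescita universale} with uniform constants: on one side, $j_u(x_0,r)\,j_v(x_0,r)=J(x_0,r)\ge 1/\bar C_4$ by Corollary \ref{corol 4.10 in Wa}; on the other, the estimate \eqref{4.11 by Wang} (whose constant is independent of $r$, and, being a local estimate for a translation-invariant system, also of the centre) together with the pointwise bound of Corollary \ref{corol 4.10 in Wa} gives $j_v(x_0,r)\le \frac{C}{r^{N+2}}\int_{B_{2r}(x_0)}v^2\le K$ for a constant $K>0$ independent of $x_0$ and $r\ge 1$. Hence $j_u(x_0,r)\ge (\bar C_4K)^{-1}$, and feeding this back into \eqref{4.11 by Wang} gives $\int_{B_{2r}(x_0)}u^2\ge c\,r^{N+2}$, so $\int_{B_{2r}(x_0)}(u^2+v^2)\ge c\,r^{N+2}$, with $c>0$ uniform.

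It remains to turn this into a lower bound for $H$. Using $\int_{B_{2r}(x_0)}(u^2+v^2)=\int_0^{2r}H(x_0,s)\,s^{N-1}\,ds$ and the fact that $H(x_0,\cdot)$ is nondecreasing (appendix), the right-hand side is at most $H(x_0,2r)\,(2r)^N/N$; comparing with the previous step gives $H(x_0,2r)\ge \bar c\,r^2$ for all $r\ge 1$, i.e. $H(x_0,R)\ge (\bar c/4)R^2$ for all $R\ge 2$, uniformly in $x_0\in\{|u-v|<\bar C_3\}$. For $R\in[1,2]$ one just uses $H(x_0,R)\ge H(x_0,1)\ge \bar C_1$ and $R^2\le 4$. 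Taking $\bar C_5$ below both $\sqrt{\bar c}/2$ and $\sqrt{\bar C_1}/2$ completes the proof.

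The only genuinely delicate point is the lower bound on $H$: one has to recognise that the pointwise and surface estimates alone see merely a constant lower bound, so the quadratic growth must be harvested from the monotonicity formula through $J(x_0,r)\ge 1/\bar C_4$ and the linear-growth control on the $L^2$-masses — after which the passage to $H$ is a one-line integration. The secondary point to watch is that every constant stays uniform over $\{|u-v|<\bar C_3\}$, which is precisely what Corollary \ref{corol 4.10 in Wa} and Lemma \ref{uniform growth for H near u=v} provide.
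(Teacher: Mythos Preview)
Your proof is correct. Both bounds rest on the same ingredients as the paper (linear growth via Proposition~\ref{uniqueness asymptotic profile thm}, the uniform estimates of Corollary~\ref{corol 4.10 in Wa}, and the inequality~\eqref{4.11 by Wang}), and your upper bound is essentially the paper's (the paper uses the doubling estimate $H(x_0,R)\le e\,H(x_0,1)\,R^2\le e\,\bar C_4\,R^2$ from Corollary~\ref{doubling} rather than the pointwise bound, but this is cosmetic).

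For the lower bound, however, your route genuinely differs from the paper's. The paper argues by scaling: it introduces the blow-down functional $J_{x_0,R}(0,1)$, shows via~\eqref{4.11 by Wang} and the uniform $L^\infty$ bound on $(u_{x_0,R},v_{x_0,R})$ in $B_2(0)$ that $J_{x_0,R}(0,1)\le C$, and then uses the change-of-variable identity $J_{x_0,R}(0,1)=\dfrac{R^4}{H^2(x_0,R)}\,J(x_0,R)$ together with $J(x_0,R)\ge 1/\bar C_4$ to read off $H^2(x_0,R)/R^4\ge C$ directly. Your argument stays at the original scale: you bound one factor $j_v(x_0,r)$ from above (via~\eqref{4.11 by Wang} and the pointwise linear growth), extract a uniform lower bound on $j_u(x_0,r)$ from $J\ge 1/\bar C_4$, feed this back into~\eqref{4.11 by Wang} to get $\int_{B_{2r}(x_0)}u^2\ge c\,r^{N+2}$, and finally convert to $H$ via monotonicity. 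The paper's version is shorter and yields the $H/R^2$ bound in one stroke; yours is more elementary in that it avoids the rescaled functional and the scaling identity, at the price of one extra step (passing from the volume integral back to $H$). Both are valid and the constants remain uniform over $\{|u-v|<\bar C_3\}$ for the same reasons.
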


\begin{proof}
By Proposition \ref{uniqueness asymptotic profile thm}, we know that under \eqref{alg growth} and \eqref{uniform limit} the solution $(u,v)$ has linear growth. Hence, we can invoke Corollary \ref{corol 4.10 in Wa}; combining this result with Corollary \ref{doubling} we deduce
\[
\frac{H(x_0,R)}{R^2} \le e H(x_0,1) \le e \bar C_4 \qquad \forall x_0 \in \{|u-v|<\bar C_3\}, \ R \ge 1.
\]
For the lower bound, we show that the quantity
\begin{multline*}
J_{x_0,R}(0,1):= \int_{B_1(0)} \frac{|\nabla u_{x_0,R}(y)|^2 + H(x_0,R)R^2 u_{x_0,R}^2(y) v_{x_0,R}^2(y)}{|y|^{N-2}}\,dy \\
\cdot \int_{B_1(0)} \frac{|\nabla v_{x_0,R}(y)|^2 + H(x_0,R)R^2 u_{x_0,R}^2(y) v_{x_0,R}^2(y)}{|y|^{N-2}}\,dy
\end{multline*}
is bounded above by a positive constant $C$ independent on $x_0 \in \R^N$ and $R \ge 1$. We use the \eqref{4.11 by Wang}: there exists $C>0$ independent on $x_0 \in \R^N$ and on $R \ge 1$ such that
\begin{multline}\label{eq1}
\int_{B_1(0)} \frac{|\nabla u_{x_0,R}(y)|^2 + H(x_0,R)R^2 u_{x_0,R}^2(y) v_{x_0,R}^2(y)}{|y|^{N-2}}\,dy = \frac{1}{H(x_0,R)} \int_{B_R(x_0)} \frac{|\nabla u(y)|^2+ u^2(y) v^2(y)}{|y-x_0|^{N-2}}\,dy \\
 \le \frac{C}{H(x_0,R) R^N} \int_{B_{2R}(x_0)} u^2 = C \int_{B_2(0)} u_{x_0,R}^2.
\end{multline}
We point out that, as $N(x_0,r) \le 1$ for every $x_0 \in \R^N$ and $r \ge 1$, the same estimate holds true for the Almgren quotient associated to $(u_{x_0,R},v_{x_0,R})$, for every $x_0 \in \R^N$ and $R \ge 1$ (see Remark \ref{N ed N riscalato}). As a consequence, the normalization $\int_{\pa B_1(0)} u_{x_0,R}^2 + v_{x_0,R}^2 =1$ gives, by Corollary \ref{doubling}, a uniform (in both $x_0$ and $R$) upper bound for $\int_{\pa B_3(0)} u_{x_0,R}^2+ v_{x_0,R}^2$. Due to the subharmonicity of $(u_{x_0,R}, v_{x_0,R})$, we obtain a uniform bound for $\{(u_{x_0,R},v_{x_0,R})\}$ in $L^\infty(B_{2}(0))$, so that we can estimate the right hand side of \eqref{eq1} obtaining
\[
\int_{B_1(0)} \frac{|\nabla u_{x_0,R}(y)|^2 + H(x_0,R)R^2 u_{x_0,R}^2(y) v_{x_0,R}^2(y)}{|y|^{N-2}}\,dy \le C
\]
for every $x_0 \in \R^N$ and $R \ge 1$. Arguing in the same way on the second factor of $J_{x_0,R}(0,1)$ we obtain the desired upper bound: there exists $C>0$ such that
\[
J_{x_0,R}(0,1) \le C \qquad \forall x_0 \in \R^N, \ \forall R \ge 1.
\]
A simple change of variable shows that $J_{x_0,R}(0,1) = \frac{R^4}{H^2(x_0,R)}   J(x_0,R)$, so that 
\begin{equation}\label{eq3}
  J(x_0,R) \le  C \frac{H^2(x_0,R)}{R^4}  \qquad \forall x_0 \in \R^N, \ \forall R \ge 1.
\end{equation}
A comparison between \eqref{eq3} and the uniform lower estimate of Corollary \ref{corol 4.10 in Wa} provides the desired result: 
\[
\frac{H^2(x_0,R)}{R^4} \ge \frac{C}{\bar C_4} \qquad \forall x_0 \in \{|u-v| < \bar C_3\}, \ \forall R \ge 1. \qedhere
\]
\end{proof}

We are ready to improve the estimate given by \eqref{blow-down base}. Firstly, we get rid of the dependence of $R_{x_0,\eps}$ on $x_0$ for $x_0 \in \{|u-v|<\bar C_3\}$.

\begin{lemma}\label{get rid of x_0}
Let $(u,v)$ be a solution of \eqref{system} satisfying \eqref{alg growth} and \eqref{uniform limit}. For every $\eps>0$ there exists $R_\eps>0$ such that 
\[
\sup_{x \in B_1(0)} |u_{x_0,R}(x)- \gamma x_N^+| + |v_{x_0,R}(x)- \gamma x_N^-| < \eps
\]
for every $R>R_\eps$ and $x_0 \in \{|u-v| < \bar C_3\}$, where $\gamma$ and $\bar C_3$ have been defined in Proposition \ref{uniqueness asymptotic profile thm} and Remark \ref{def di bar C_3} respectively.
\end{lemma}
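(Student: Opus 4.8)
The plan is to argue by contradiction, running the blow-down analysis of Proposition~\ref{uniqueness asymptotic profile thm} \emph{uniformly} in the base point. Suppose the statement fails; then there are $\eps_0>0$, a sequence $R_n\to+\infty$ and points $x_n\in\{|u-v|<\bar C_3\}$ with
\[
\sup_{x\in B_1(0)}\bigl|u_{x_n,R_n}(x)-\gamma x_N^+\bigr|+\bigl|v_{x_n,R_n}(x)-\gamma x_N^-\bigr|\ge\eps_0\qquad\text{for all }n.
\]
Set $(u_n,v_n):=(u_{x_n,R_n},v_{x_n,R_n})$. By Lemma~\ref{about u=v} and Remark~\ref{def zeta} the centers satisfy $|x_{n,N}|\le\zeta$ for every $n$ (the tangential components $x_n'$ may run off to infinity, but this will play no role). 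Each $(u_n,v_n)$ solves the rescaled system with competition parameter $H(x_n,R_n)R_n^2$ and is normalized by $\int_{\partial B_1(0)}u_n^2+v_n^2=1$.

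First I would collect the uniform bounds. Since $(u,v)$ has linear growth (Proposition~\ref{uniqueness asymptotic profile thm}), $N(x_0,\rho)\le1$ for all $\rho$, hence the Almgren quotient of $(u_n,v_n)$ is $\le1$ at every radius $\ge1$ (Remark~\ref{N ed N riscalato}); Corollary~\ref{doubling} and the normalization then give $\int_{\partial B_r(0)}u_n^2+v_n^2\le e\,r^{N+1}$ for all $r\ge1$, uniformly in $n$, and by subharmonicity a uniform $L^\infty_{loc}(\R^N)$ bound on $\{(u_n,v_n)\}$. Crucially, Lemma~\ref{control on H/R} gives $\sqrt{H(x_n,R_n)}\ge\bar C_5R_n\to+\infty$, so the competition parameter $H(x_n,R_n)R_n^2\ge\bar C_5^2R_n^4\to+\infty$. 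Thus the local segregation result (Theorem~\ref{segregation thm}) applies: up to a subsequence $(u_n,v_n)\to(u_\infty,v_\infty)$ in $\mathcal C^0_{loc}(\R^N)\cap H^1_{loc}(\R^N)$, with $u_\infty-v_\infty$ harmonic in $\R^N$, $u_\infty,v_\infty\ge0$ subharmonic, $u_\infty v_\infty\equiv0$, $\int_{\partial B_1(0)}u_\infty^2+v_\infty^2=1$ and $\int_{\partial B_r(0)}u_\infty^2+v_\infty^2\le e\,r^{N+1}$ for $r\ge1$.

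The core of the argument is then to identify $(u_\infty,v_\infty)=(\gamma x_N^+,\gamma x_N^-)$, for which I would reuse the reasoning of Proposition~\ref{uniqueness asymptotic profile thm}; the only new ingredient is that $|x_{n,N}|\le\zeta$ removes the dependence on the center. If $x_N>0$ then $(x_n+R_nx)_N\ge -\zeta+R_nx_N\to+\infty$, so \eqref{uniform limit} gives $v(x_n+R_nx)\to0$, hence $v_n(x)=v(x_n+R_nx)/\sqrt{H(x_n,R_n)}\to0$; by continuity $v_\infty\equiv0$ on $\{x_N\ge0\}$. Symmetrically, for $x_N<0$ one has $(x_n+R_nx)_N\to-\infty$, so $u(x_n+R_nx)\to0$ and $u_\infty\equiv0$ on $\{x_N\le0\}$. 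Consequently $w:=u_\infty-v_\infty$ is harmonic in $\R^N$, nonnegative on $\{x_N\ge0\}$, nonpositive on $\{x_N\le0\}$, with $\bigl(r^{1-N}\int_{\partial B_r(0)}w^2\bigr)^{1/2}\le Cr$; an entire harmonic function of at most linear growth is affine, and the sign condition (with $w\equiv0$ on $\{x_N=0\}$ by continuity) forces $w(x)=c\,x_N$ for some $c\ge0$. Hence $u_\infty=w^+=c\,x_N^+$ and $v_\infty=w^-=c\,x_N^-$, and the normalization $c^2\int_{\partial B_1(0)}x_N^2=1$ identifies $c$ with the constant $\gamma$ of Proposition~\ref{uniqueness asymptotic profile thm}. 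Finally, since $\overline{B_1(0)}$ is compact, the $\mathcal C^0$-convergence lets us pass to the limit in the displayed inequality, giving $\sup_{B_1(0)}|u_\infty-\gamma x_N^+|+|v_\infty-\gamma x_N^-|\ge\eps_0>0$, a contradiction.

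I expect the only delicate step to be the uniform control in the second paragraph — the blow-up of the competition parameter together with the lower bound $\sqrt{H(x_n,R_n)}/R_n\ge\bar C_5$ — but this is exactly what Lemma~\ref{control on H/R} was established for; once that pinching is available, both the compactness and the identification of the limit go through as in the fixed-$x_0$ situation of Proposition~\ref{uniqueness asymptotic profile thm}.
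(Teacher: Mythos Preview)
Your proof is correct and shares the paper's overall architecture: contradiction, uniform $L^\infty_{loc}$ bounds via Corollary~\ref{doubling} and subharmonicity, divergence of the competition parameter via Lemma~\ref{control on H/R}, and compactness through Theorem~\ref{segregation thm}. The one genuine difference is in how you identify the limit $(u_\infty,v_\infty)$ with $(\gamma x_N^+,\gamma x_N^-)$. The paper first observes that $w(0)=u_\infty(0)-v_\infty(0)=0$ (since $|u(x_n)-v(x_n)|<\bar C_3$ and $H(x_n,R_n)\to\infty$), so the Almgren quotient of the harmonic function $w$ satisfies $N_\infty(0,r)\ge1$; combined with the upper bound $N_\infty(0,r)\le1$ inherited from $N(x_n,R_nr)\le1$, this forces $N_\infty\equiv1$ and hence $w$ linear, after which only the single limit $v\to0$ as $x_N\to+\infty$ is used to pick out the direction $e_N$. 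You instead exploit \emph{both} limits in \eqref{uniform limit} together with $|x_{n,N}|\le\zeta$ to get $v_\infty\equiv0$ on $\{x_N\ge0\}$ and $u_\infty\equiv0$ on $\{x_N\le0\}$ simultaneously, and then a direct Liouville argument (harmonic, linear $L^2$ growth on spheres, vanishing on $\{x_N=0\}$ with the correct sign) delivers $w=c\,x_N$ in one stroke. Your route is a touch more elementary in that it avoids the frequency analysis of the limiting profile; the paper's route, on the other hand, needs only the $v\to0$ half of \eqref{uniform limit} for the identification step, in the spirit of Proposition~\ref{uniqueness asymptotic profile thm}.
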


\begin{proof}
Assume by contradiction that there exist $\bar \eps>0$ and a sequence $(x_j,R_j)$ with $x_j \in \{|u-v| < \bar C_3\}$ for every $j$, $R_j \to +\infty$, and
\begin{equation}\label{absurd j}
\sup_{x \in B_1(0)} |u_{x_j,R_j}(x)- \gamma x_N^+| + |v_{x_j,R_j}(x)- \gamma x_N^-| \ge \bar\eps
\end{equation}
for every $j$. Let us denote $(u_j,v_j)=(u_{x_j,R_j}, v_{x_j,R_j})$. We know that $(u_j,v_j)$ solves
\[
\begin{cases}
-\Delta u_j = - H(x_j,R_j) R_j^2 u_j v_j^2 & \text{in $\R^N$} \\
-\Delta v_j = - H(x_j,R_j) R_j^2 u_j^2 v_j & \text{in $\R^N$} 
\end{cases} \qquad \forall j.
\]
In light of Lemma \ref{control on H/R}, we know that
\begin{equation}\label{eq31}
\lim_{j \to +\infty} H(x_j, R_j) \ge \lim_{j \to +\infty} \bar C_5 R_j^2 = +\infty;
\end{equation}
a fortiori the competition parameter $H(x_j,R_j) R_j^2$ tends to $+\infty$ as $j \to +\infty$. Note that the normalization $\int_{\pa B_1(0)} u_j^2 + v_j^2=1$ implies, by means of Corollary \ref{doubling} (which we can apply on $(u_j,v_j)$, see Remark \ref{N ed N riscalato}), that
\[
\int_{\pa B_r(0)} u_j^2 + v_j^2 \le e r^{N+1} \qquad \forall r > 1, \ \forall j.
\]
By subharmonicity, the sequence $\{(u_j,v_j)\}$ is uniformly bounded in every compact set $K$ of $\R^N$, and in light of Theorem \ref{local Holder estimate} it is also uniformly bounded in $\mathcal{C}^{0,\alpha}(K)$, for every $\a \in (0,1)$. The local segregation Theorem \ref{segregation thm} implies that, up to a subsequence, $(u_j,v_j) \to (u_\infty,v_\infty)$ in $\mathcal{C}^0_{loc}(\R^N) \cap H^1_{loc}(\R^N)$, and
\begin{itemize}
\item[($i$)] $u_\infty v_\infty \equiv 0$ in $\R^N$,
\item[($ii$)] $H(x_j, R_j) R_j^2 u_j^2 v_j^2 \to 0$ as $j \to \infty$ in $L^1_{loc}(\R^N)$,
\item[($iii$)] $u_\infty-v_\infty$ is harmonic in $\R^N$,
\item[($iv$)] by \eqref{eq31} and the fact that $x_j \in \{|u-v|<\bar C_3\}$
\[
|u_\infty(0)-v_\infty(0)|= \lim_{j \to +\infty} \frac{1}{\sqrt{H(x_j,R_j)}} |u(x_j)-v(x_j)|= 0
\]
\item[($v$)] by uniform convergence the normalization on $\pa B_1(0)$ pass to the limit:
\begin{equation}\label{normalization infty} 
\int_{\pa B_1(0)} u_\infty^2 + v_\infty^2 = 1,
\end{equation}
\item[($vi$)] by $H^1$ and uniform convergence and the point ($ii$)
\begin{multline}\label{uniform upper bound N_infty}
\frac{r \int_{B_r(0)} |\nabla u_\infty|^2 + |\nabla v_\infty|^2}{\int_{\pa B_r(0)} u_\infty^2 + v_\infty^2} = \lim_{j \to +\infty} \frac{r \int_{B_r(0)} |\nabla u_j|^2 + |\nabla v_j|^2 + H(x_j, R_j) R_j^2 u_j^2 v_j^2}{\int_{\pa B_r(0)} u_j^2 + v_j^2} \\
= \lim_{j \to +\infty} N(x_j, R_j r) \le 1 \qquad \forall r \in (0,1),
\end{multline}
where the upper bound on $N$ follows from the fact that, under assumptions \eqref{alg growth} and \eqref{uniform limit}, Proposition \ref{uniqueness asymptotic profile thm} applies and guarantees that $(u,v)$ has linear growth.
\end{itemize}  
Note that 
\[
N_\infty(0,r):=\frac{r \int_{B_r(0)} |\nabla u_\infty|^2 + |\nabla v_\infty|^2}{\int_{\pa B_r(0)} u_\infty^2 + v_\infty^2} 
\]
is the Almgren quotient of the harmonic function $u_\infty-v_\infty$, and it is nondecreasing. As $u_\infty(0)-v_\infty(0)=0$, it results
\begin{equation}\label{uniform lower bound N_infty}
N_\infty(0,r) \ge \lim_{s \to 0^+} N_\infty(0,s) = \deg(u_\infty-v_\infty,0) \ge 1
\end{equation}
for every $r>0$. Here, $\deg(u_\infty-v_\infty,0)$ denotes the degree of vanishing of the harmonic function $u_\infty-v_\infty$ in $0$, and is greater then $1$ because it has to be a positive integer (this result is by now well known). By monotonicity, a comparison between \eqref{uniform upper bound N_infty} and \eqref{uniform lower bound N_infty} yields $N_\infty(0,r)  = 1$ for every $r \in (0,1)$, which implies (see Proposition 3.9 in \cite{NoTaTeVe}, which we can apply, as explained in Remark \ref{rem su CPAM e dim}) that $u_\infty-v_\infty$ is a linear function, that is, $(u_\infty(x),v_\infty(x))= (\langle e, x\rangle^+, \langle e, x\rangle^-)$ for some $e \in \R^N$. We claim that 
\begin{equation}\label{e= gamma e_N}
e= \gamma e_N,
\end{equation}
 which gives a contradiction with \eqref{absurd j} and completes the proof of the statement. To prove the claim, we note that under our assumptions we have
 \[
 v_j(x)= \frac{1}{\sqrt{H(x_j,R_j)}} v(x_j'+ R_j x', x_{j,N}+R_j x_N) \to 0 
 \]
 as $j \to +\infty$, uniformly in every compact subset of $B_1(0) \cap \R^N_+$; to pass to the limit, we used the fact that $H(x_j,R_j) \ge \bar C_1$ (see Lemma \ref{uniform growth for H near u=v}) and the boundedness of the set $\{|u-v|< \bar C_3\}$ in the $e_N$ direction (see Lemma \ref{about u=v}), which guarantees that $x_{j,N} + R_j x_N \to +\infty$ as $j \to +\infty$. By the uniqueness of the limit, we deduce $e= C e_N$ for some $C>0$. The normalization \eqref{normalization infty} yields $C=\gamma$, which concludes the proof of the claim \eqref{e= gamma e_N}.
 \end{proof}

\begin{definition}
Let us fix $\tau>0$ not too small (to be determined in the following Lemma). For a given $x_0 \in \R^N$ and $R>0$ we introduce the conical sectors
\begin{gather*}
S_{x_0,R}^+:= \left\{x=(x',x_N) \in \R^N: \frac{R}{2} < |x-x_0| <R, |x'-x_0'|< \tau (x_N-x_{0,N}) \right\} \\
S_{x_0,R}^-:= \left\{x=(x',x_N) \in \R^N: \frac{R}{2} < |x-x_0| <R, |x'-x_0'|< \tau (x_{0,N}-x_N) \right\},
\end{gather*}
and their union $S_{x_0,R}$.
\end{definition}

The following picture represents the set $S_{x_0,R}^+$ for a given $x_0 \in \R^N$.

\begin{center}
\begin{tikzpicture}[scale=0.5]
\draw (59:0.5) node[anchor=east]{$x_0$};
\filldraw (59:0.5) circle (1pt);
\draw (59:0.5)+(3cm,-2.5cm) arc (-60:51:3cm);
\draw (59:0.5)+(6cm,-5cm) arc (-60:51:6cm);
\draw (59:0.5) -- (8cm,6cm);
\draw (8cm,5cm) node[anchor=south]{$R$};
\draw (59:0.5) -- (8cm,-6cm);
\draw (7cm,0.5cm) node{$S_{x_0,R}^+$};
\end{tikzpicture}
\end{center}

The geometry of the set $\{|u-v|<\bar C_3\}$ allows to show that the union of $S_{x_0,R}$ with $R$ sufficiently large and $x_0 \in \{|u-v|< \bar C_3\}$ contains, and it is contained in, the union of two half-spaces. 

\begin{lemma}\label{union of the cones}
Let $(u,v)$ be a solution of \eqref{system} satisfying \eqref{alg growth} and \eqref{uniform limit}. There exists $\bar R>0$ such that, for every $\widehat R \ge \bar R$ there exists $M_2 = M_2(\widehat{R}) > \zeta$ such that
\[
\{|x_N|> M_2\} \subset \bigcup_{\substack{x_0 \in \{|u-v|<\bar C_3\} \\ R> \widehat R}} S_{x_0,R} \subset \{|x_N|>\zeta\},
\]
where $\zeta$ has been defined in Remark \ref{def zeta}. Furthermore, for every $N \ge 2$ we can choose $\tau>0$ such that, if $x \in \{|x_N|> M_2\}$, there exist $\tilde x \in \{|u-v|< \bar C_3\}$ and $\tilde R> \widehat R$ such that
\[
\overline{Q}_x \subset S_{\tilde x,\tilde R},
\]
where $Q_x$ denotes the open cube centered in $x$ with side $\displaystyle \frac{x_N}{100}$.
\end{lemma}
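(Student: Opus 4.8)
This is a purely geometric statement; the only facts needed from the rest of the paper are two properties of the set $A:=\{|u-v|<\bar C_3\}$ established above, namely that $|x_{0,N}|\le\zeta$ for every $x_0\in A$ (Remark~\ref{def zeta}) and that $A$ meets every vertical line $\{(x',t):t\in\R\}$ with $x'\in\R^{N-1}$ (Lemma~\ref{about u=v}). The plan is to fix the three free parameters in a definite order: first $\tau$, chosen large in terms of $N$ alone --- this is the ``$\tau>0$ not too small'' left open in the Definition preceding the statement --- then $\bar R$, large in terms of $\zeta$ and $\tau$, and finally, for each $\widehat R\ge\bar R$, a threshold $M_2=M_2(\widehat R)>\zeta$, large in terms of $\widehat R$, $\zeta$, $N$ and $\tau$.

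For the inclusion $\bigcup S_{x_0,R}\subset\{|x_N|>\zeta\}$ I would argue directly. If $y\in S_{x_0,R}^+$ with $x_0\in A$ and $R>\widehat R$, then $|y'-x_0'|<\tau(y_N-x_{0,N})$ forces $y_N>x_{0,N}$, and combining this with $|y-x_0|>R/2>\widehat R/2$ gives
\[
(y_N-x_{0,N})^2\ \ge\ \frac{|y-x_0|^2}{1+\tau^2}\ >\ \frac{\widehat R^2}{4(1+\tau^2)}.
\]
Since $x_{0,N}\ge-\zeta$, this yields $y_N>-\zeta+\widehat R/(2\sqrt{1+\tau^2})$, hence $y_N>\zeta$ as soon as $\widehat R\ge\bar R:=\max\{1,\,4\zeta\sqrt{1+\tau^2}\}$; the symmetric estimate on $S_{x_0,R}^-$ gives $y_N<-\zeta$.

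The heart of the lemma is the cube assertion, and the remaining inclusion $\{|x_N|>M_2\}\subset\bigcup S_{x_0,R}$ follows from it for free since $x\in Q_x\subset\overline{Q}_x\subset S_{\tilde x,\tilde R}$. So fix $x$ with $x_N>M_2$ (the case $x_N<-M_2$ being symmetric, with the cones $S^-$), and by Lemma~\ref{about u=v} pick $\tilde x=(x',\tilde x_N)\in A$, so that $|\tilde x_N|\le\zeta$. The cube $\overline{Q}_x$ has half-side $x_N/200$, hence for every $y\in\overline{Q}_x$
\[
|y'-\tilde x'|=|y'-x'|\le\frac{\sqrt{N-1}}{200}\,x_N,\qquad \frac{199}{200}x_N-\zeta\ \le\ y_N-\tilde x_N\ \le\ \frac{201}{200}x_N+\zeta.
\]
Once $M_2\ge 200\zeta$ (so that $\zeta\le x_N/200$), these bounds give $|y'-\tilde x'|<\tau(y_N-\tilde x_N)$ on all of $\overline{Q}_x$ provided $\tau$ is taken large in terms of $N$ (e.g. any $\tau>\sqrt{N-1}/198$), and this is where $\tau$ is locked. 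The same bounds confine $|y-\tilde x|$ to an interval $[c_1x_N,\,c_2x_N]$ with $c_1,c_2$ depending only on $N$ and (as one checks) $c_2<2c_1$; consequently the admissible outer radii $(c_2x_N,\,2c_1x_N)$ form a nonempty interval, and after a further enlargement of $M_2$ in terms of $\widehat R$ and $N$ it lies inside $(\widehat R,+\infty)$. Any $\tilde R$ in that interval then satisfies $\overline{Q}_x\subset S_{\tilde x,\tilde R}^+$ and $\tilde R>\widehat R$. Taking $M_2=M_2(\widehat R)$ to be the largest of the finitely many thresholds on $x_N$ imposed along the way (in particular $M_2>\zeta$) finishes the proof.

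I expect the only real difficulty to be bookkeeping rather than any analytic point: everything reduces to elementary Euclidean geometry once the shape of $A$ is understood. The delicate step is making the dimensional constants cooperate --- the conical sector $S_{\tilde x,\tilde R}^+$ has a fixed aperture $\arctan\tau$ and a fixed outer/inner radius ratio $2$, and forcing it to contain the whole cube $\overline{Q}_x$ dictates simultaneously how large $\tau$ must be relative to $N$ (to clear the aperture) and how large $M_2$ must be relative to $\widehat R$ and $\zeta$ (to fit the radii). Choosing the parameters exactly in the order $\tau$, then $\bar R$, then $M_2$ is what keeps all these requirements mutually compatible.
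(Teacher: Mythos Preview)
Your proposal is correct and follows essentially the same geometric approach as the paper: pick $\tilde x\in\{|u-v|<\bar C_3\}$ on the vertical line through $x$, then verify by elementary estimates that $\overline{Q}_x$ fits inside a cone $S^+_{\tilde x,\tilde R}$ once $\tau$ is large enough (in terms of $N$) and $x_N$ is large enough (in terms of $\widehat R$ and $\zeta$). The paper is terser --- it simply sets $\tilde R:=\tfrac{3}{2}(x_N-\tilde x_N)$ and $M_2:=\max\{6\zeta,\tfrac{4}{5}\widehat R\}$ and asserts the containment --- whereas you leave $\tilde R$ as any point of an admissible interval $(c_2x_N,2c_1x_N)$; but the underlying argument is identical. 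Note only that the paper's specific choice $\tilde R=\tfrac{3}{2}(x_N-\tilde x_N)$ is recorded in Remark~\ref{rem sui quadrati} and reused verbatim in later lemmas, so for consistency with what follows it is worth singling out one explicit $\tilde R$ rather than an interval.
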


\begin{proof}
Thanks to Lemma \ref{about u=v}, it is not difficult to see that, provided $\bar R$ is sufficiently large and $\widehat{R}>\bar R$, it results 
\[
\bigcup_{\substack{x_0 \in \{|u-v|<\bar C_3\} \\ R> \widehat R}} S_{x_0,R}  \subset \bigcup_{\substack{x_0 \in \{|u-v|<\bar C_3\} \\ R> \bar R}} S_{x_0,R} \subset \{|x_N|>\zeta\}.
\] 
Now we argue in $\R^N_+$ showing that there exists $ M_2 = M_2(\widehat{R})> \zeta$ such that
\[
\{x_N> M_2\} \subset \bigcup_{\substack{x_0 \in \{|u-v|<\bar C_3\} \\ R> \widehat R}} S_{x_0,R}^+,
\]
and that for every $x \in \{x_N>M_2\}$ there exist the desired $\tilde x$ and $\tilde R$. For $x \gg 1$, let $\tilde x$ the point of $\{|u-v|<\bar C_3\}$ such that $\tilde x'=x'$ ($\tilde{x}$ exists, see Lemma \ref{about u=v}). Provided $\tau$ is not too small, the cube centered in $x$ with side $\frac{x_N}{100}$ is contained in the conical sector $S_{\tilde x, \tilde R}^+$ for $\tilde R:= \frac{3}{2}(x_N- \tilde x_N)$. Note that,
\[
\frac{3}{2}(x_N- \tilde x_N) \ge \frac{3}{2} (x_N-\zeta) \ge \frac{5}{4} x_N > \widehat R.
\]
whenever $x_N > M_2 := \max\left\{6 \zeta, \frac{4}{5} \widehat R\right\}$. The same argument works in the half-space $\R^N_-$.
\end{proof}

\begin{remark}\label{rem sui quadrati}
From the previous proof we see that, fixed $\widehat R>\bar R$, it is possible to associate to every $x \in \{|x_N| > M_2\}$ the conical sector $S_{\tilde x,\tilde R}$ which contains the cube $Q_x$; that is, $\tilde x$ is a point of $\{|u-v|< \bar C_3\}$ such that $\tilde x'=x'$ and 
\[
\tilde R= \begin{cases}
                \frac{3}{2}(x_N-\tilde x_N) & \text{if $x_N>  M_2$}\\
                \frac{3}{2}(\tilde x_N-x_N) & \text{if $x_N<- M_2$}.
                \end{cases}
                \]
\end{remark}

In each $S_{x_0,R}$ we can obtain a further improvement, by means of Lemma \ref{control on H/R}, of the estimates of Lemma \ref{get rid of x_0}. 

\begin{lemma}\label{uniform estimate in a cone}
Let $(u,v)$ be a solution of \eqref{system} satisfying \eqref{alg growth} and \eqref{uniform limit}. For every $\eps>0$, if $R>R_{\eps}$ and $x_0 \in \{|u-v|< \bar C_3\}$ then
\[
\sup_{x \in S_{x_0,R}} \left| \frac{u(x) -\gamma \frac{\sqrt{H(x_0,R)}}{R}(x_N-x_{0,N})^+}{|x-x_0|} \right|
+ \left| \frac{v(x) -\gamma \frac{\sqrt{H(x_0,R)}}{R}(x_N-x_{0,N})^-}{|x-x_0|} \right| < \eps,
\]
with $\bar C_5 \le \frac{\sqrt{H(x_0,R)}}{R} \le \bar C_6$. We recall that $\bar C_3, \bar C_5, \bar C_6$ and $R_\eps$ have been defined in Remark \ref{def di bar C_3}, Lemma \ref{control on H/R} and Lemma \ref{get rid of x_0} respectively.
\end{lemma}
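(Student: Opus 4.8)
The plan is to derive this directly from the uniform blow-down estimate of Lemma~\ref{get rid of x_0} by undoing the rescaling, using the two-sided control $\bar C_5\le \sqrt{H(x_0,R)}/R\le \bar C_6$ of Lemma~\ref{control on H/R} to absorb the factor $\sqrt{H(x_0,R)}$ that appears. The key observation is purely geometric: the conical sectors $S_{x_0,R}$ are contained in the annulus $\{R/2<|x-x_0|<R\}$, so on one hand the rescaled points land in $B_1(0)$ (where Lemma~\ref{get rid of x_0} applies), and on the other hand $|x-x_0|>R/2$ produces exactly the factor $2/R$ needed to turn the bound $\sqrt{H(x_0,R)}\,\eps'$ into $2\bar C_6\eps'$.

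Concretely, I would fix $\eps>0$, set $\eps':=\eps/(2\bar C_6)$ and let $R_\eps:=R_{\eps'}$ be the radius furnished by Lemma~\ref{get rid of x_0} for $\eps'$. Given $R>R_\eps$, $x_0\in\{|u-v|<\bar C_3\}$ and $x\in S_{x_0,R}$, put $z:=(x-x_0)/R$, so that $\tfrac12<|z|<1$ and in particular $z\in B_1(0)$. Lemma~\ref{get rid of x_0} then gives
\[
\left|u_{x_0,R}(z)-\gamma z_N^+\right|+\left|v_{x_0,R}(z)-\gamma z_N^-\right|<\eps'.
\]
Recalling that $u_{x_0,R}(z)=H(x_0,R)^{-1/2}u(x_0+Rz)$, similarly for $v$, and that $z_N=(x_N-x_{0,N})/R$, multiplication by $\sqrt{H(x_0,R)}$ turns this into
\[
\left|u(x)-\gamma\frac{\sqrt{H(x_0,R)}}{R}(x_N-x_{0,N})^+\right|
+\left|v(x)-\gamma\frac{\sqrt{H(x_0,R)}}{R}(x_N-x_{0,N})^-\right|<\sqrt{H(x_0,R)}\,\eps'.
\]

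Finally I would divide by $|x-x_0|$: since $x\in S_{x_0,R}$ we have $|x-x_0|>R/2$, hence $1/|x-x_0|<2/R$, and therefore
\[
\left|\frac{u(x)-\gamma\frac{\sqrt{H(x_0,R)}}{R}(x_N-x_{0,N})^+}{|x-x_0|}\right|
+\left|\frac{v(x)-\gamma\frac{\sqrt{H(x_0,R)}}{R}(x_N-x_{0,N})^-}{|x-x_0|}\right|
<\frac{2\sqrt{H(x_0,R)}}{R}\,\eps'\le 2\bar C_6\eps'=\eps,
\]
where the last inequality uses the upper bound in Lemma~\ref{control on H/R}. Taking the supremum over $x\in S_{x_0,R}$ yields the claim, while the stated two-sided bound on $\sqrt{H(x_0,R)}/R$ is again just Lemma~\ref{control on H/R}. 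There is no real obstacle here: the delicate work was already done in Lemma~\ref{control on H/R}, which is precisely what prevents the blow-up of $\sqrt{H(x_0,R)}$ from destroying the estimate; here one only has to keep track of the rescaling and of the annular geometry of $S_{x_0,R}$.
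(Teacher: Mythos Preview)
Your argument is correct and matches the paper's own proof essentially line by line: both apply Lemma~\ref{get rid of x_0} at the rescaled point $z=(x-x_0)/R\in B_1(0)$, multiply through by $\sqrt{H(x_0,R)}$, divide by $|x-x_0|>R/2$, and then absorb the resulting factor $2\sqrt{H(x_0,R)}/R$ via the upper bound $\bar C_6$ of Lemma~\ref{control on H/R}. The only cosmetic difference is that you pre-adjust $\eps'=\eps/(2\bar C_6)$ so as to land exactly on $\eps$, whereas the paper carries $\eps$ through and tacitly relabels at the end.
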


\begin{proof}
Lemma \ref{get rid of x_0} ensures that for every $R>R_{\eps}$, for every $x_0 \in \{|u-v|< \bar C_3\}$
\[
\sup_{x \in S_{0,1}} \left| \frac{u(x_0+Rx)}{\sqrt{H(x_0,R)}}-\gamma x_N^+ \right| + \left| \frac{v(x_0+Rx)}{\sqrt{H(x_0,R)}}-\gamma x_N^- \right| < \eps,
\]
that is, 
\[
\left|u(x_0+Rx) -\gamma  \sqrt{H(x_0,R)} x_N^+\right| + \left|v(x_0+R x) - \gamma \sqrt{H(x_0,R)} x_N^- \right|< \sqrt{H(x_0,R)} \eps
\]
for every $x \in S_{0,1}$. Consequently, dividing both the sides for $R$ we obtain
\[
|x| \left(\left| \frac{u(x_0+Rx)}{|Rx|}-\gamma \frac{\sqrt{H(x_0,R)}}{R}\frac{R x_N^+}{|Rx|} \right|
+ \left| \frac{v(x_0+Rx)}{|Rx|}-\gamma \frac{\sqrt{H(x_0,R)}}{R}\frac{R x_N^-}{|Rx|} \right|\right) < \frac{\sqrt{H(x_0,R)}}{R}\eps
\]
for every $x \in S_{0,1}$, provided $R>R_\eps$ and $x_0 \in \{|u-v|< \bar C_3\}$. In turns, this gives 
\[
\sup_{x \in S_{x_0,R}} \left| \frac{u(x) -\gamma \frac{\sqrt{H(x_0,R)}}{R}(x_N-x_{0,N})^+}{|x-x_0|} \right|
+ \left| \frac{v(x) -\gamma \frac{\sqrt{H(x_0,R)}}{R}(x_N-x_{0,N})^-}{|x-x_0|} \right|
< 2\frac{\sqrt{H(x_0,R)}}{R}\eps
\]
for every $R>R_{\eps}$ and $x_0 \in \{|u-v|< \bar C_3\}$. Finally, we can use the upper bound on $\frac{\sqrt{H(x_0,R)}}{R}$, see Lemma \ref{control on H/R}.
\end{proof}

We are ready to apply the gradient estimates for the Poisson equation in a half-space $x_N \gg1$; we will show that if $x_N>0$ is sufficiently large then there exists a linear functions $\varphi_x$ (depending on $x$) which approximate $u$ in a $\mathcal{C}^1$-sense in $x$. In light of the uniform control given in Lemma \ref{control on H/R}, the slope of $\varphi_x$ will turn to be uniformly bounded from below in an entire half-space (the same holds for $v$ in $x_N \ll -1$), allowing to conclude the proof of Proposition \ref{monotonicity at infinity}. It is essential to work in conical sectors, because in this way we can control the quantity $|x-x_0|$ with the privileged component $|x_N-x_{0,N}|$.

\begin{lemma}\label{C^1 approx}
Let $(u,v)$ be a solution of \eqref{system} satisfying \eqref{alg growth} and \eqref{uniform limit}. For every $\eps>0$ there exists $M_{\eps}>0$ such that 
\[
\left|\nabla u(x) - \gamma \frac{\sqrt{H(\tilde x, \tilde R)}}{\tilde R}e_N \right| < \eps \qquad \forall x \in \{x_N>M_{\eps}\},
\]
where $\tilde x$ and $\tilde R$ have been defined in Remark \ref{rem sui quadrati}. Analogously, 
\[
\left|\nabla v(x) - \gamma \frac{\sqrt{H(\tilde x, \tilde R )}}{\tilde R}e_N \right| < \eps \qquad \forall x \in \{x_N<-M_{\eps}\}.
\]
\end{lemma}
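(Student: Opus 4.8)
The plan is to apply the standard interior gradient estimate for the Poisson equation (cf.\ \cite{GiTr}) to the difference between $u$ and the affine function dictated by the blow-down profile of Proposition \ref{uniqueness asymptotic profile thm}, using Lemma \ref{uniform estimate in a cone} to control that difference and Lemma \ref{estimate on the Delta} to control its Laplacian. Fix $\widehat R \ge \bar R$ (with $\bar R$ as in Lemma \ref{union of the cones}), to be chosen later, let $M_2 = M_2(\widehat R) > \zeta$ be the associated threshold, and for $x \in \{x_N > M_2\}$ let $\tilde x \in \{|u-v| < \bar C_3\}$ and $\tilde R > \widehat R$ be as in Remark \ref{rem sui quadrati}, so that the cube $Q_x$ centered at $x$ with side $x_N/100$ satisfies $\overline{Q}_x \subset S_{\tilde x,\tilde R}$. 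First I would introduce
\[
w(y) := u(y) - \gamma\,\frac{\sqrt{H(\tilde x,\tilde R)}}{\tilde R}\,(y_N - \tilde x_N),
\]
observe that $\nabla w(x) = \nabla u(x) - \gamma\frac{\sqrt{H(\tilde x,\tilde R)}}{\tilde R} e_N$, and note that, since $M_2 > 6\zeta$ and $|\tilde x_N| \le \zeta$, every $y \in Q_x$ satisfies $y_N \ge \frac12 x_N > \tilde x_N$, whence $w(y) = u(y) - \gamma\frac{\sqrt{H(\tilde x,\tilde R)}}{\tilde R}(y_N - \tilde x_N)^+$ there; moreover $\tilde R = \frac32(x_N - \tilde x_N) \le 3 x_N$ once $x_N$ is large. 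The affine term being harmonic, $\Delta w = \Delta u = u v^2$ on $Q_x$.

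Next I would estimate the two ingredients of the gradient bound on a ball $B_\rho(x) \subset Q_x$ with $\rho = x_N/200$. By Lemma \ref{uniform estimate in a cone}, provided $\tilde R > R_{\eps'}$,
\[
|w(y)| \le \eps'\,|y - \tilde x| \le \eps'\,\tilde R \le 3\eps'\,x_N \qquad \text{for every } y \in Q_x,
\]
while by Lemma \ref{estimate on the Delta} together with Remark \ref{rem su corol 1}, once $x_N$ is large enough that $\frac12 x_N > M_1$,
\[
|\Delta w(y)| = u(y)\,v^2(y) \le C\,e^{-C y_N} \le C\,e^{-\frac{C}{2} x_N} \qquad \text{for every } y \in Q_x.
\]
The interior gradient estimate $|\nabla w(x)| \le \frac{C}{\rho}\sup_{B_\rho(x)}|w| + C\,\rho\,\sup_{B_\rho(x)}|\Delta w|$ then yields
\[
\left|\nabla u(x) - \gamma\,\frac{\sqrt{H(\tilde x,\tilde R)}}{\tilde R}\,e_N\right| = |\nabla w(x)| \le C\,\eps' + C\,x_N\,e^{-\frac{C}{2} x_N}.
\]
Given $\eps > 0$, I would then choose $\eps'$ with $C\eps' < \eps/2$, set $\widehat R := \max\{R_{\eps'},\bar R\}$ (so that $\tilde R > \widehat R \ge R_{\eps'}$ and Lemma \ref{uniform estimate in a cone} is applicable on $S_{\tilde x,\tilde R}$), and pick $M_\eps \ge M_2(\widehat R)$ large enough that $\frac12 M_\eps > M_1$, that $\tilde R \le 3 x_N$ for $x_N > M_\eps$, and that $C\,x_N\,e^{-\frac{C}{2} x_N} < \eps/2$ there. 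This gives the conclusion for $u$. The estimate for $v$ on $\{x_N < -M_\eps\}$ follows by the same argument, with $u$ replaced by $v$, $u v^2$ by $u^2 v$, $x_N$ by $|x_N|$, and the affine comparison function taken to be $\gamma\frac{\sqrt{H(\tilde x,\tilde R)}}{\tilde R}(y_N - \tilde x_N)^-$, whose gradient on $\{y_N < \tilde x_N\}$ is $-\gamma\frac{\sqrt{H(\tilde x,\tilde R)}}{\tilde R} e_N$.

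The main obstacle, and the reason for working with the conical sectors, is the matching of scales. The gradient estimate on a ball of radius $\rho$ produces a term $\rho^{-1}\sup_{B_\rho}|w|$ which would diverge as $x_N \to +\infty$ if $|w|$ were only bounded by a constant; it stays uniformly small precisely because Lemma \ref{uniform estimate in a cone} bounds $|w|$ by $\eps'|y - \tilde x|$, a quantity comparable to $\tilde R \sim x_N \sim \rho$, so that $\rho^{-1}\sup_{B_\rho}|w|$ is of order $\eps'$ uniformly in $x$. For this to be legitimate the ball $B_\rho(x)$, and hence the cube $Q_x$, whose size is comparable to $x_N$, must lie inside the cone $S_{\tilde x,\tilde R}$ where that estimate is available, which is exactly the content of Lemma \ref{union of the cones} and Remark \ref{rem sui quadrati}; checking that the various proportionality constants (cube side $x_N/100$, ball radius $x_N/200$, $\tilde R = \frac32(x_N - \tilde x_N)$) fit together is the delicate but routine bookkeeping, while the Laplacian contribution is exponentially small by Lemma \ref{estimate on the Delta} and therefore harmless.
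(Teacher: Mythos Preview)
Your proof is correct and follows essentially the same approach as the paper's own argument: apply the interior gradient estimate for the Poisson equation to $u$ minus the affine function $\gamma\frac{\sqrt{H(\tilde x,\tilde R)}}{\tilde R}(y_N-\tilde x_N)$ on a region of size comparable to $x_N$, use Lemma~\ref{uniform estimate in a cone} to bound the zeroth-order term and Lemma~\ref{estimate on the Delta} to bound the Laplacian, and observe that the resulting bound is $C\eps' + Cx_N e^{-Cx_N}$. Your version is in fact slightly more careful on some bookkeeping points (working on a ball $B_\rho(x)\subset Q_x$, checking explicitly that $y_N>\tilde x_N$ so that the positive part can be dropped, and ensuring $y_N>M_1$ throughout $Q_x$ rather than just $x_N>M_1$).
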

\begin{proof}
For every $\eps>0$, let $R_\eps$ be defined in Lemma \ref{get rid of x_0}. Let $M_{2,\eps}:= M_2 (\max\{\bar R, R_\eps\})$, where $M_2$ has been defined in Lemma \ref{union of the cones}. Let $M_{\eps}:= \max\{M_1, M_{2,\eps}\}$, where $M_1$ has been defined in Remark \ref{rem su corol 1}. For $x \in \{x_N>M_{\eps}\}$, there are $\tilde R>R_{\eps}$ and $\tilde x \in \{|u-v|<\bar C_3\}$ such that $\overline{Q}_x \subset S_{\tilde x, \tilde R}^+$, see Lemma \ref{union of the cones} and Remark \ref{rem sui quadrati}. By the gradient estimates for the Poisson equation (see \cite{GiTr}, section 3.4) plus Lemmas \ref{estimate on the Delta} and \ref{uniform estimate in a cone}, we deduce that 
\begin{equation}\label{eq32}
\begin{split}
\left| \nabla u(x) - \gamma \frac{\sqrt{H(\tilde x, \tilde R)}}{\tilde R}e_N \right| & \le \frac{C}{x_N} \sup_{y \in \overline{Q}_x} \left|u(y) - \gamma \frac{\sqrt{H(\tilde x, \tilde R)}}{\tilde R}(y_N-\tilde x_{N}) \right| + \frac{x_N}{2} \sup_{y \in \overline{Q}_x}  v^2(y) u(y) \\
& \le \frac{C}{x_N} \sup_{y \in \overline{Q}_x} \eps |y - \tilde x| + C x_N e^{-C x_N} .
\end{split} 
\end{equation}
As $\overline{Q}_x \subset S_{\tilde x,\tilde R}^+$, for every $y \in \overline{Q}_x$ it results 
\begin{align*}
|y-\tilde x| & < (\tau +1 )(y_N-\tilde x_{N}) \le     (\tau +1) (y_N-x_N)+ (\tau +1) (x_N-\tilde x_N) \\
&\le C x_N + (\tau +1)(x_N +\zeta) \le Cx_N,
\end{align*}
where we recall that $\zeta= \sup\{x_{0,N}: x_0 \in \{u=v\}\}< M_\eps<x_N$. Plugging this estimate into the \eqref{eq32}, we obtain 
\[
\left| \nabla u(x) - \gamma \frac{\sqrt{H(\tilde x, \tilde R)}}{\tilde R}e_N \right| \le C\eps + C x_N e^{-C x_N}
\]
whenever $x_N>M_{\eps}$; if necessary, we can replace $M_{\eps}$ with a larger quantity, obtaining the thesis for $u$.\\
A similar argument can be carried on for $v$.
\end{proof}

\begin{proof}[Conclusion of the proof of Proposition \ref{monotonicity at infinity}]
Given $\nu \in \{\nu \in \S^{N-1}: \langle \nu, e_N \rangle >0\}$, we choose
\[
0<\eps(\nu) \le \frac{\gamma \bar C_5}{2} \langle e_N, \nu \rangle.
\]
where $\bar C_5$ has been defined in Lemma \ref{control on H/R}. It results
\begin{align*}
\pa_\nu u(x) &=  \left\langle \nabla u(x) - \gamma \frac{\sqrt{H(\tilde x, \tilde R)}}{\tilde R}e_N , \nu \right\rangle + \gamma \frac{\sqrt{H(\tilde x, \tilde R)}}{\tilde R}\langle e_N,\nu \rangle \\
& \ge - \eps(\nu) + \gamma \bar C_5 \langle e_N,\nu \rangle >0
\end{align*}
for every $x \in \{x_N>M_\nu\}$, where $M_\nu := M_{\eps(\nu)}$ has been defined in Lemma \ref{C^1 approx}. The same argument gives the monotonicity of $v$ for $x_N \ll 1$.
\end{proof}

With a slightly modification of the conclusion of the proof, we obtain also the

\begin{corollary}\label{uniform monotonicity at infinity}
If we consider $\Theta:=\{ \nu \in \S^{N-1}: \langle e_N, \nu \rangle \ge \hat C \}$ with $\hat C \in (0,1]$, then there exists $M_{\Theta}>0$ such that  
\begin{align*}
& x \in\{x_N > M_{\Theta}\} \Rightarrow \pa_\nu u(x) >0 \qquad \forall \nu \in \Theta \\
& x \in\{x_N < -M_{\Theta}\} \Rightarrow \pa_\nu v(x) <0 \qquad \forall \nu \in \Theta.
\end{align*}
\end{corollary}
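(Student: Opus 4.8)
The plan is to revisit the conclusion of the proof of Proposition \ref{monotonicity at infinity}, noting that the direction $\nu$ enters the argument \emph{only} through the quantity $\langle e_N, \nu\rangle$, which on the cone $\Theta$ is bounded below by the fixed constant $\hat C > 0$, uniformly in $\nu$. Thus, whereas in Proposition \ref{monotonicity at infinity} one made the $\nu$-dependent choice $\eps(\nu) \le \tfrac{\gamma \bar C_5}{2}\langle e_N,\nu\rangle$, here a \emph{single} choice of $\eps$ will do for all $\nu \in \Theta$ simultaneously.

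Concretely, I would fix $\eps := \tfrac{\gamma \bar C_5 \hat C}{2}$, where $\gamma$ and $\bar C_5$ are as in Proposition \ref{uniqueness asymptotic profile thm} and Lemma \ref{control on H/R}. By Lemma \ref{C^1 approx} there exists $M_\eps > 0$ such that, for every $x \in \{x_N > M_\eps\}$,
\[
\left| \nabla u(x) - \gamma \frac{\sqrt{H(\tilde x, \tilde R)}}{\tilde R} e_N \right| < \eps,
\]
with $\tilde x \in \{|u-v| < \bar C_3\}$ and $\tilde R > \widehat R$ as in Remark \ref{rem sui quadrati}, and where Lemma \ref{control on H/R} guarantees $\tfrac{\sqrt{H(\tilde x, \tilde R)}}{\tilde R} \ge \bar C_5$. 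Then, for any $\nu \in \Theta$ and any $x$ with $x_N > M_\eps$, I would decompose
\[
\pa_\nu u(x) = \left\langle \nabla u(x) - \gamma \frac{\sqrt{H(\tilde x, \tilde R)}}{\tilde R} e_N ,\, \nu \right\rangle + \gamma \frac{\sqrt{H(\tilde x, \tilde R)}}{\tilde R} \langle e_N, \nu \rangle,
\]
and use Cauchy--Schwarz together with $|\nu| = 1$ and $\langle e_N, \nu\rangle \ge \hat C$ to obtain $\pa_\nu u(x) \ge -\eps + \gamma \bar C_5 \hat C = \tfrac{\gamma \bar C_5 \hat C}{2} > 0$. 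Setting $M_\Theta := M_\eps$ then yields the first assertion; the second, concerning $v$ in $\{x_N < -M_\Theta\}$, follows verbatim from the second estimate of Lemma \ref{C^1 approx}.

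Since the resulting lower bound on $\pa_\nu u(x)$ is independent of both $\nu \in \Theta$ and $x$ in the half-space, there is no genuine obstacle here: the only point to observe is that the uniform lower bound $\langle e_N, \cdot\rangle \ge \hat C$ on $\Theta$ permits the uniform-in-$\nu$ choice of $\eps$, which is precisely the "slight modification" of the conclusion of the proof of Proposition \ref{monotonicity at infinity} alluded to in the statement.
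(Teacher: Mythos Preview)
Your proposal is correct and is precisely the ``slight modification of the conclusion of the proof'' that the paper alludes to: replacing the $\nu$-dependent choice $\eps(\nu) \le \tfrac{\gamma \bar C_5}{2}\langle e_N,\nu\rangle$ by the uniform choice $\eps = \tfrac{\gamma \bar C_5 \hat C}{2}$, made possible by the lower bound $\langle e_N,\nu\rangle \ge \hat C$ on $\Theta$. The decomposition, the use of Lemma~\ref{C^1 approx} and Lemma~\ref{control on H/R}, and the final estimate all match the paper's argument.
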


\section{Monotonicity in the $e_N$ direction}\label{sec: monotonicity}

We are going to apply the moving planes method in order to show that $u$ and $v$ are monotone in the $e_N$ direction in the whole $\R^N$.  To be precise:

\begin{proposition}\label{monotonicity}
Let $(u,v)$ be a solution of \eqref{system} satisfying \eqref{alg growth} and \eqref{uniform limit}. Then
\[
\frac{\pa u}{\pa x_N}>0 \quad \text{and} \quad \frac{\pa v}{\pa x_N}<0 \quad \text{in $\R^N$}.
\]
\end{proposition}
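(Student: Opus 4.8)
The plan is to run the method of moving planes in the $e_N$-direction, working simultaneously on both components. For $\l\in\R$ put $x^\l:=(x',2\l-x_N)$, $\Sigma_\l:=\{x_N<\l\}$, $u_\l(x):=u(x^\l)$, $v_\l(x):=v(x^\l)$, and set $w^u_\l:=u_\l-u$ and $w^v_\l:=v-v_\l$ in $\Sigma_\l$. Subtracting the equations of \eqref{system} written for $(u_\l,v_\l)$ and for $(u,v)$ one checks that $(w^u_\l,w^v_\l)$ solves the \emph{cooperative} linear system
\[
\begin{cases}
-\Delta w^u_\l+v_\l^2\,w^u_\l=u\,(v+v_\l)\,w^v_\l & \text{in }\Sigma_\l,\\
-\Delta w^v_\l+u^2\,w^v_\l=v_\l\,(u+u_\l)\,w^u_\l & \text{in }\Sigma_\l,
\end{cases}
\]
with $w^u_\l=w^v_\l=0$ on $\pa\Sigma_\l=\{x_N=\l\}$ and all zeroth-order and coupling coefficients nonnegative. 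It suffices to prove that $w^u_\l\ge0$ and $w^v_\l\ge0$ in $\Sigma_\l$ for \emph{every} $\l\in\R$; this says precisely that $u$ is nondecreasing and $v$ nonincreasing in $x_N$, and the strict inequalities of the statement then follow from the strong maximum principle applied to $-\Delta(\pa_N u)+v^2\,\pa_N u=-2uv\,\pa_N v\ge0$ and $-\Delta(\pa_N v)+u^2\,\pa_N v=-2uv\,\pa_N u\le0$ (a nonnegative supersolution vanishing at one point vanishes identically, and $\pa_N u\equiv0$ or $\pa_N v\equiv0$ is incompatible with \eqref{uniform limit}).

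\emph{Step 1: monotonicity at infinity for both components, and initialization.} Proposition~\ref{monotonicity at infinity} provides $M_0>0$ with $\pa_N u>0$ in $\{x_N>M_0\}$ and $\pa_N v<0$ in $\{x_N<-M_0\}$. I would first \emph{transfer} these so that both signs hold in the \emph{same} region $\{|x_N|>M\}$ for a large $M\ge M_0$: since $\pa_N v$ solves $-\Delta(\pa_N v)+u^2\,\pa_N v=-2uv\,\pa_N u\le0$ in $\{x_N>M_0\}$, where by \eqref{uniform limit} the coefficient $u^2$ is (on $\{x_N>M\}$ for $M$ large) bounded below by a positive constant, where $\pa_N v$ is bounded and tends to $0$ as $x_N\to+\infty$ uniformly in $x'$ (Lemma~\ref{estimate on the Delta} and interior gradient estimates), and where the right-hand side decays exponentially in $x_N$ uniformly in $x'$, a maximum principle in the unbounded domain $\{x_N>M\}$ (an exponential barrier plus a Phragm\'en--Lindel\"of argument exploiting the at most polynomial growth of $\pa_N v$ in $x'$) forces $\pa_N v<0$ in $\{x_N>M\}$; symmetrically $\pa_N u>0$ in $\{x_N<-M\}$. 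Granting this, for every $\l>M$ the comparisons $w^u_\l\ge0$, $w^v_\l\ge0$ hold automatically on $\Sigma_\l\cap\{x_N>M\}$ (both $x$ and $x^\l$ lie where $u$ increases and $v$ decreases in $x_N$) and on $\Sigma_\l\cap\{x_N<-M\}$ (there $u(x)$ is uniformly small while $u(x^\l)$ is uniformly large, and conversely for $v$, by \eqref{uniform limit}); on the remaining slab $\{|x_N|\le M\}$, of fixed width but unbounded in $x'$, one applies the maximum principle for the cooperative system above, its boundary values on $\{x_N=\pm M\}$ being nonnegative by the previous comparisons and $w^u_\l,w^v_\l$ having at most polynomial growth in $x'$. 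Hence $\Lambda:=\{\l\in\R:\ w^u_\mu\ge0\text{ and }w^v_\mu\ge0\text{ in }\Sigma_\mu\ \text{for all }\mu\ge\l\}$ contains $[M,+\infty)$, so $\bar\l:=\inf\Lambda\le M<+\infty$.

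\emph{Step 2: the critical plane is at $-\infty$.} It remains to show $\bar\l=-\infty$. First, $\Lambda$ is closed: if $\l_n\downarrow\bar\l$ with $\l_n\in\Lambda$, one passes to the limit (again using \eqref{uniform limit} and Lemma~\ref{estimate on the Delta} to control the behaviour far out in $x'$) and obtains $w^u_{\bar\l}\ge0$, $w^v_{\bar\l}\ge0$ in $\Sigma_{\bar\l}$. Assume by contradiction $\bar\l>-\infty$. By the strong maximum principle for the cooperative system, either $w^u_{\bar\l}\equiv0$ — which forces $w^v_{\bar\l}\equiv0$, i.e.\ $(u,v)$ symmetric about $\{x_N=\bar\l\}$, contradicting \eqref{uniform limit} — or $w^u_{\bar\l}>0$ and $w^v_{\bar\l}>0$ throughout $\Sigma_{\bar\l}$. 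In the latter case one pushes the plane past $\bar\l$: for small $\eps>0$, the comparisons for $(w^u_{\bar\l-\eps},w^v_{\bar\l-\eps})$ hold for free on $\{x_N<-M\}$ by \eqref{uniform limit}, and on the fixed-width slab $\{-M\le x_N\le\bar\l-\eps\}$ one combines the strict positivity of $w^u_{\bar\l},w^v_{\bar\l}$ there — made quantitative \emph{uniformly in} $x'$ via compactness of the solution modulo $x'$-translations, a consequence of \eqref{uniform limit}, Lemma~\ref{estimate on the Delta} and uniform Harnack/elliptic estimates — with a continuity argument and the maximum principle for the cooperative system to get $w^u_{\bar\l-\eps}\ge0$, $w^v_{\bar\l-\eps}\ge0$ there as well. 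Then $\bar\l-\eps\in\Lambda$, a contradiction. Hence $\bar\l=-\infty$, so $w^u_\l,w^v_\l\ge0$ in $\Sigma_\l$ for all $\l$, and Proposition~\ref{monotonicity} follows via the strong maximum principle as above.

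\emph{Main obstacle.} The recurrent difficulty is running maximum principles in domains unbounded in the $x'$-variables (the half-space $\{x_N>M\}$ in the transfer, and the infinite slabs in Steps 1--2): one must exclude that the supremum of $w^u_\l$ or $w^v_\l$ is approached along a sequence with $|x'|\to+\infty$. This is exactly where one needs the full strength of \eqref{uniform limit} (uniformity in $x'$), the exponential decay of Lemma~\ref{estimate on the Delta}, uniform-in-$x'$ Harnack and gradient estimates, and the at most polynomial growth of $u,v$ in $x'$; and it is the cooperative structure of the system for $(w^u_\l,w^v_\l)$ that makes the two-component maximum principles available — reflecting the fact, stressed in the introduction, that one cannot argue on $u$ and $v$ separately.
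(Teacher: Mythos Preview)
Your overall strategy---moving planes in $x_N$ simultaneously on both components---is the paper's strategy, and your Step~2 is essentially the paper's: translate in $x'$ along an offending sequence, extract a limit solution via the uniform bounds of Lemma~\ref{bound on strips}, and reach a contradiction (the paper does this with Hopf's lemma at the hyperplane $\{x_N=\Lambda\}$ rather than with a uniform-in-$x'$ quantitative positivity, but the compactness mechanism is the same).

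The substantive gap is in your initialization. Your ``transfer'' step, asserting $\pa_N v<0$ in $\{x_N>M\}$ from $\pa_N u>0$ there via an exponential-barrier/Phragm\'en--Lindel\"of argument, is not justified: you have no boundary condition for $\pa_N v$ on $\{x_N=M\}$, so a maximum principle in $\{x_N>M\}$ cannot conclude a sign. The exponential barrier only gives $(\pa_N v)^+\le Ce^{-c(x_N-M)}$, not negativity. Likewise, your appeal to a ``maximum principle for the cooperative system'' in the infinite slab $\{|x_N|\le M\}$ is asserted without proof; for cooperative systems in unbounded domains this is not automatic and would itself require a nontrivial argument.

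The paper sidesteps both issues by a simpler route which you should adopt: for $\lambda$ large one shows $u_\lambda\le u$ in $T_\lambda=\{x_N>\lambda\}$ \emph{directly}, using only that $\pa_N u>0$ in $\{x_N>M_{e_N}\}$ (Proposition~\ref{monotonicity at infinity}) together with the uniform bound $K:=\sup_{\{x_N<M_{e_N}\}}u<+\infty$ and \eqref{uniform limit}. No transfer, no slab. Once $u_\lambda\le u$ is in hand, the equation for $v-v_\lambda$ \emph{decouples}: $\Delta(v-v_\lambda)-u_\lambda^2(v-v_\lambda)\ge0$ with $(v-v_\lambda)^+\le v\le\delta$ bounded and $v-v_\lambda=0$ on $\{x_N=\lambda\}$, so the scalar maximum principle in unbounded domains (Lemma~\ref{max_principle unbounded}) gives $v\le v_\lambda$. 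This is the key trick you are missing: rather than invoking a cooperative-system maximum principle, one exploits the order in which the two comparisons are obtained so that each step is scalar.
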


In what follows we will use many times the following version of the maximum principle in unbounded domains, Lemma 2.1 in \cite{BeCaNi}.

\begin{lemma}\label{max_principle unbounded}
Let $D$ be an open connected subset of $\R^N$, possibly unbounded. Assume that $\overline{D}$ is disjoint from the closure of an infinite open connected cone. Suppose that, for a function $c \in L^{\infty}_{loc}(D)$, $c \le 0$ a.e. in $D$, we have
\[
\begin{cases}
\Delta v + c(x) v \ge 0 & \text{in $D$} \\ 
v \le 0 & \text{on $\pa D$},
\end{cases}
\]
where $v \in \mathcal{C}^0(\overline{D}) \cap W_{loc}^{2,N}(D)$ and $v^+ \in L^\infty(D)$, that is, $v$ is bounded above. Then $v \le 0$ in $D$.
\end{lemma}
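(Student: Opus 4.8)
The plan is to prove Lemma~\ref{max_principle unbounded} by the classical device of Berestycki, Caffarelli and Nirenberg: one reduces the statement to the construction of a positive supersolution of $\D w+c\,w\le 0$ that \emph{diverges at infinity along $\overline D$}, and then applies a maximum principle to the quotient $z:=v/w$. The hypothesis that $\overline D$ avoids the closure of an infinite open connected cone $\Sigma$ is exactly what makes such a barrier available; without it the statement is false.

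The first step is the barrier. After translating the vertex of $\Sigma$ to the origin, write $\Sigma=\{r\theta:\ r>0,\ \theta\in\omega\}$ with $\omega\subset\S^{N-1}$ open and nonempty, so that $\overline D$ contains neither $0$ nor any ray in a direction of $\overline\omega$; in particular $d_0:=\operatorname{dist}(0,\overline D)>0$. Shrinking, I would fix a small open geodesic ball $\omega_0\Subset\omega$ and set $\omega'':=\S^{N-1}\setminus\overline{\omega_0}$, a proper open subdomain of $\S^{N-1}$; the directions $x/|x|$ realised by points of $\overline D$ then lie in $\S^{N-1}\setminus\overline\omega$, whose closure $\S^{N-1}\setminus\omega$ is a \emph{compact} subset of $\omega''$. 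Let $\lambda_1>0$ be the first Dirichlet eigenvalue of $-\Delta_{\S^{N-1}}$ on $\omega''$, $\phi_1>0$ the associated eigenfunction, and $\gamma>0$ the positive root of $\gamma(\gamma+N-2)=\lambda_1$. Then $w(x):=|x|^{\gamma}\phi_1(x/|x|)$ is harmonic and positive on the open cone $C(\omega''):=\{x\ne 0:\ x/|x|\in\omega''\}\supseteq\overline D$; since $\phi_1\ge c_1>0$ on the compact set $\S^{N-1}\setminus\omega$ and $|x|\ge d_0$ on $\overline D$, one gets $w\ge c_1 d_0^{\gamma}>0$ on $\overline D$ and $w(x)\ge c_1|x|^{\gamma}\to+\infty$ as $|x|\to\infty$, $x\in\overline D$. (A merely superharmonic $w$ with the same two properties would suffice; the harmonic choice is the cleanest.)

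Next I would pass to $z:=v/w\in\mathcal{C}^0(\overline D)\cap W^{2,N}_{loc}(D)$. From $w>0$ and the boundary condition, $z\le 0$ on $\pa D$; from $v^+\in L^\infty(D)$ and $w\ge c_1|x|^{\gamma}$, the function $z$ is bounded above on $\overline D$ and $\limsup_{|x|\to\infty}z(x)\le 0$. Expanding $\D v=w\,\D z+2\nabla w\cdot\nabla z+(\D w+c\,w)z$ and using $\D w=0$, $c\le 0$, $w>0$, the differential inequality for $v$ becomes, a.e.\ in $D$,
\[
\D z+b(x)\cdot\nabla z+\tilde c(x)\,z\ \ge\ 0,\qquad b:=\frac{2\nabla w}{w}\in L^\infty_{loc}(D),\qquad \tilde c:=c\le 0 .
\]
Finally I would run the maximum principle: if $m:=\sup_{\overline D}z>0$, then since $z\le 0$ on $\pa D$ and $\limsup_{|x|\to\infty}z\le 0<m/2$, the superlevel set $\{z\ge m/2\}$ is a closed bounded subset of $D$, hence compact, so $z$ attains the value $m$ at an interior point $x_0\in D$. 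The function $\psi:=z-m\le 0$ satisfies $\psi(x_0)=0$ and $\D\psi+b\cdot\nabla\psi+\tilde c\,\psi\ge-\tilde c\,m\ge 0$ (here $\tilde c\le 0$ and $m\ge 0$ are used), so the strong maximum principle for strong solutions (see \cite{GiTr}) forces $\psi\equiv 0$ on the connected set $D$; thus $z\equiv m>0$ on $\pa D$ (which is nonempty, since $D$ misses a cone, hence $D\ne\R^N$), contradicting $z\le 0$ on $\pa D$. Therefore $m\le 0$, i.e.\ $v\le 0$ in $D$.

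The one genuinely delicate point is the barrier in the second paragraph: one must exhibit a positive (super)harmonic function on a neighbourhood of $\overline D$ that is simultaneously bounded below and divergent \emph{uniformly in all directions} at infinity — something that is impossible in a general unbounded domain (for a half-space, the natural homogeneous harmonic function is linear and fails to blow up tangentially). The cone hypothesis repairs exactly this: because $\overline D$ stays at positive angular distance from a fixed spherical cap, the eigenfunction $\phi_1$ is bounded away from $0$ along $\overline D$, and this is precisely the uniformity the argument needs. The remaining manipulations (the product rule for $z$, compactness of the superlevel set, the strong maximum principle) are routine.
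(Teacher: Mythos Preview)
Your argument is correct and is precisely the classical Berestycki--Caffarelli--Nirenberg barrier method. Note, however, that the paper does not give its own proof of this lemma at all: it is quoted verbatim as Lemma~2.1 of \cite{BeCaNi} and used as a black box. So there is nothing to compare against in the paper itself; what you have written is essentially a clean reconstruction of the proof in the cited reference, with the cone--eigenfunction barrier $w(x)=|x|^{\gamma}\phi_1(x/|x|)$ and the quotient $z=v/w$ reduced to an interior maximum that the strong maximum principle rules out.
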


We postpone the proof of Proposition \ref{monotonicity} after the following Lemma, which is a consequence of the uniform estimate given in Corollary \ref{corol 4.10 in Wa}.

\begin{lemma}\label{bound on strips}
Let $(u,v)$ be a solution of \eqref{system} satisfying \eqref{alg growth} and \eqref{uniform limit}. Then for every $M>0$ there exists $\bar C_M>0$ such that
\begin{align*}
& u(x) + |\nabla u(x)| \le \bar C_M \quad \forall x \in \R^{N-1} \times (-\infty,M], \\
& v(x) + |\nabla v(x)| \le \bar C_M \quad \forall x \in \R^{N-1} \times [-M,+\infty).
\end{align*}
\end{lemma}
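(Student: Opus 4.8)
The plan is to derive $L^\infty$ bounds first and then bootstrap to gradient bounds via interior estimates for the Poisson equation, exploiting the exponential decay of $uv^2$ in the lower half-space.

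\emph{$L^\infty$ bounds.} First I would observe that, by Proposition \ref{uniqueness asymptotic profile thm}, assumptions \eqref{alg growth} and \eqref{uniform limit} force $(u,v)$ to have linear growth, so Corollary \ref{corol 4.10 in Wa} is available. For a point $x=(x',x_N)$ with bounded $N$-th coordinate, Lemma \ref{about u=v} supplies a point $\tilde x=(x',\tilde x_N)\in\{|u-v|<\bar C_3\}$ with $|\tilde x_N|\le\zeta$ (Remark \ref{def zeta}); since $|x-\tilde x|=|x_N-\tilde x_N|$, the point $x$ lies in $B_R(\tilde x)$ for some $R\ge 1$ depending only on $\zeta$ and on the bound imposed on $x_N$, and Corollary \ref{corol 4.10 in Wa} gives $u(x)+v(x)\le\bar C_4(1+R)$. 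Hence $u$ and $v$ are both bounded on every finite slab $\R^{N-1}\times[-M_0,M_0]$. This argument is useless as $x_N\to-\infty$ (there it only reproduces the linear bound), so for that region I would use \eqref{uniform limit} directly: there is $M'>0$, which I may take $\ge M_1(1,2)+1$ (with $M_1(1,2)$ as in Lemma \ref{estimate on the Delta} and Remark \ref{rem su corol 1}), such that $u\le 1$ on $\{x_N\le-M'\}$. Combining the two regions yields a bound $u\le C_0$ on all of $\R^{N-1}\times(-\infty,M+1]$.

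\emph{Gradient of $u$.} I would then fix $x\in\R^{N-1}\times(-\infty,M]$ and work on the ball $B_{1/2}(x)\subset\R^{N-1}\times(-\infty,M+\tfrac12]$, on which $u\le C_0$. On this ball $\Delta u=uv^2$ is also bounded: if $x_N\le -M_1(1,2)-\tfrac12$ then $B_{1/2}(x)\subset\{x_N\le -M_1(1,2)\}$ and Lemma \ref{estimate on the Delta} (with $p=1$, $q=2$) gives $uv^2\le Ce^{-C|x_N|}\le C$; otherwise $B_{1/2}(x)$ is contained in a finite slab $\R^{N-1}\times[-M_1(1,2)-1,M+1]$ on which both $u$ and $v$ are bounded by the previous step, so again $uv^2$ is bounded. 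The interior gradient estimate for the Poisson equation (see \cite{GiTr}, section 3.4) then gives $|\nabla u(x)|\le\bar C_M$ uniformly over the strip, and together with the $L^\infty$ bound we get $u(x)+|\nabla u(x)|\le\bar C_M$ on $\R^{N-1}\times(-\infty,M]$.

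\emph{The estimate for $v$.} The second inequality follows by applying the bound just obtained to the pair $(x',x_N)\mapsto(v(x',-x_N),u(x',-x_N))$, which is again a solution of \eqref{system} satisfying \eqref{alg growth} and \eqref{uniform limit}. I expect the only real subtlety (and it is minor) to be the bookkeeping of the thresholds $M_1(1,2)$, $\zeta$, $M'$ and the $\tfrac12$ lost in passing to the balls $B_{1/2}(x)$, so that the decay region $\{x_N\le-M_1(1,2)\}$ and the finite slab genuinely cover the enlarged strip; there is no analytic obstacle beyond Corollary \ref{corol 4.10 in Wa}, Lemma \ref{estimate on the Delta} and the standard interior estimates. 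The key point to keep in mind is that Corollary \ref{corol 4.10 in Wa} alone only gives linear growth in the lower half-space, so hypothesis \eqref{uniform limit} must be used there to obtain an honest \emph{bound} on $u$.
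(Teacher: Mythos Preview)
Your proof is correct and follows essentially the same route as the paper: Corollary \ref{corol 4.10 in Wa} (via Lemma \ref{about u=v}) to bound $u$ on finite slabs, assumption \eqref{uniform limit} to bound $u$ for $x_N\ll -1$, Lemma \ref{estimate on the Delta} to control $uv^2$ in the lower half-space, and interior gradient estimates for the Poisson equation on unit-scale balls. The only cosmetic differences are that the paper works with cubes of side $1$ instead of balls $B_{1/2}(x)$, and handles the $v$ estimate by a direct ``the same argument applies'' rather than your explicit reflection $(x',x_N)\mapsto(v(x',-x_N),u(x',-x_N))$.
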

\begin{proof}
We prove only the first inequality. Under our assumptions, we know that $(u,v)$ has linear growth (see Proposition \ref{uniqueness asymptotic profile thm}). For any $x \in \R^N$, let $\tilde x \in \{|u-v|<\bar C_3\}$ such that $\tilde x'=x'$ and let $\tilde R= \frac{3}{2}|x_N- \tilde x_N|$, so that $x \in B_{\tilde R}(\tilde x)$ ($\tilde x$ exists, see Lemma \ref{about u=v}). By means of Corollary \ref{corol 4.10 in Wa} we deduce that
\begin{equation}\label{eq24}
u(x)+v(x) \le \sup_{y \in B_{\tilde R}(\tilde x)} \bar C_4 \left(1+ \frac{3}{2}|x_N-\tilde x_N|\right) \le \frac{3}{2}\bar C_4 \left( \frac{2}{3}+ \zeta+|x_N|\right) \qquad \forall x \in \R^N,
\end{equation}
where $\zeta$ has been defined in Remark \ref{def zeta}. Now, let $M_1$ be defined in Remark \ref{rem su corol 1}, so that $u v^2 \le C e^{-C \vert x_N \vert}$ in $\{x_N<-M_1\}$. Moreover, by \eqref{uniform limit} there exist $M_3>0$ such that $u\le 1$ in $\R^{N-1} \times (-\infty,-M_3+ \frac{1}{2}]$. we set $M_4:= \max\{M_1,M_3\}$ and we take any $M>M_4$.

\medskip

\noindent By \eqref{eq24}, it results
\[
u(x',x_N) \le  \begin{cases} \frac{3}{2}\bar C_4 \left( \frac{2}{3}+ \zeta+M\right) & \text{if $x \in \{|x_N| \le M\}$} \\
                                          1 & \text{if $x \in \{x_N \le -M\}$} \end{cases} \le 1+\frac{3}{2}\bar C_4 \left( \frac{2}{3}+ \zeta+|x_N|\right)=: C_{1,M}
\]
whenever $(x',x_N) \in \R^{N-1} \times (-\infty,M]$. Clearly, if $M \le M_4$ the same bound holds. 

\medskip

\noindent Let's pass to the estimate on the gradient. In $\R^{N-1} \times \left[-M-\frac{1}{2},M+\frac{1}{2}\right]$ both $u$ and $u v^2$ are uniformly bounded thanks to \eqref{eq24}. Also, by definition of $M_1$ and $M_3$ both $u$ and $u v^2$ are uniformly bounded in $\R^{N-1} \times (-\infty,-M]$. Altogether, this means that $u$ and $u v^2$ are uniformly bounded in $\R^{N-1} \times \left(-\infty,M + \frac{1}{2}\right]$, so that we can apply the standard gradient estimates for the Poisson equation (see \cite{GiTr}, section 3.4) in cubes of side $1$, obtaining the existence of $C_{2,M}>0$ such that $|\nabla u(x)| \le C_{2,M}$ for every $x \in \R^{N-1} \times (-\infty,M]$.

\medskip

\noindent The thesis is then satisfied with $\bar C_M:= \max\{C_{1,M}, C_{2,M}\}$.
\end{proof}

\begin{proof}[Proof of Proposition \ref{monotonicity}]
We introduce the classical notation for the moving planes method: for $\lambda \in \R$, we set 
\[
u_{\lambda}(x',x_N):=u(x',2\lambda - x_N) \quad \text{and} \quad T_\l:= \{x_N > \l\}.
\]
We aim at proving that 
\begin{equation}\label{moving thesis 0}
u_\l(x) \le u(x) \quad \text{and} \quad v_\l(x) \ge v(x) \quad \forall x \in T_\l, \ \forall \l \in \R,
\end{equation}
This and the strong maximum principle give the desired monotonicity.\\
To prove that \eqref{moving thesis 0} is satisfied, we show that
\begin{equation}\label{moving thesis}
\Sigma:=\{ \l \in \R: \text{$u_\t \le u$ and $v_\t \ge v$ in $T_\t$ for every $\t \ge \l$}\}= \R.
\end{equation}

\paragraph{Step 1)} \emph{There exists $\bar M>0$ such that if $\l>\bar M$ then $u_\l \le  u$ and $v_\l \ge v$ in $T_\l$}.\\
Let $M_N:= M_{e_N}$, where $M_{e_N}$ has been defined in Proposition \ref{monotonicity at infinity}. Let $K:= \sup\{u: x_N<M_{N}\} <+\infty$. By assumption \eqref{uniform limit}, for every $\d>0$ there exists $\bar M>0$ such that 
\begin{equation}\label{eq7}
u(x)>K \quad \text{and}  \quad v(x)<\d \quad \text{in $\{x_N> 2\bar M-M_N\}$}.
\end{equation}
Let $\l>\bar M$. If $x \in \{x_N\ge 2\lambda -M_{N}\}$ then $x_N \ge 2 \bar M-M_N$ and $2\lambda - x_N \le M_N$, so that by definition
\[
u_\l(x) = u(x', 2\l - x_N) \le K \le u(x).
\]
To prove that $u_\l \le u$ in $T_\l$ for every $\l > \bar M$, it remains to show that if $\l> \bar M$ then $u_\l \le u$ in $\{\l<x_N<2\l-M_N\}$. If $x \in \{\l<x_N<2\l- M_N\}$, then $x_N>2\l-x_N>M_N$, so that the fact that $u_\l(x) \le u(x)$ follows directly from the monotonicity of $u$ in the $e_N$ direction for $\{x_N>M_N\}$.   \\
Now, let us show that if $\l>\bar M$ then $v_\l \ge v$ in $T_\l$. Since $u_\l \le u$ in $T_\l$, we have
\[
\begin{cases}
\Delta(v-v_\lambda)-u_\lambda^2(v-v_\l)\ge 0 & \text{in $T_\l$}\\
v-v_\l =0 & \text{on $\pa T_\l$},
\end{cases}
\]
and $(v-v_\l)^+ \le v \le \delta$ in $T_\l$ (see equation \eqref{eq7}). Consequently, we are in position to apply Lemma \ref{max_principle unbounded}, obtaining $v-v_\l \le 0$ in $T_\l$.
\paragraph{Step 2)} \emph{$\Sigma = \R$}. \\
In the first step we showed that $\Sigma \neq \emptyset$. Note that $\Sigma$ is a closed interval and contains the unbounded interval $(\bar M,+\infty)$. Assume by contradiction that $\Sigma \neq \R$, that is, $\Lambda := \inf \Sigma > -\infty$. Then there exist sequences $(\lambda_i) \subset \R$ and $(x^i) \subset T_{\l_i}$ such that
$\lambda_i < \Lambda$ and $\l_i \to \Lambda$ as $i \to \infty$, and at least one between
\begin{subequations}
\begin{align}
& u_{\l_i}(x^i) > u(x^i) \quad \forall i \label{eq8}\\ 
& v_{\l_i}(x^i)<v(x^i) \quad \forall i, \label{eq13}
\end{align}
\end{subequations}
holds true.  

\medskip

\noindent Assume that \eqref{eq8} holds true. We claim that the sequence $(x_N^i) \subset \R$ is bounded. If not, as $x_N^i>\l_i$ and $\l_i$ is bounded, up to a subsequence $x_N^i \to +\infty$ as $i \to \infty$. It follows that $2 \l_i -x_N^i \to -\infty$, and in light of assumption \eqref{uniform limit} we obtain
\[
\lim_{i \to \infty} u_{\l_i}(x^i) = \lim_{i \to \infty} u((x^i)',2\l_i -x_N^i)  = 0 \quad \text{and} \quad \lim_{i \to \infty} u(x^i) = +\infty,
\]
in contradiction with \eqref{eq8} for $i$ sufficiently large. Hence the claim is proved and, up to a subsequence, $x_N^i \to x_N^\infty$ as $i \to \infty$. 

\medskip

\noindent Let us set
\[
u^i(x):= u((x^i)'+x',x_N) \quad \text{and} \quad v^i(x):= v((x^i)'+x',x_N).
\]
From Lemma \ref{bound on strips} it follows that $\{(u^i,v^i)\}$ is uniformly bounded and equi-Lipschitz-continuous in any compact subset of $\R^N$, so that the standard regularity theory for elliptic equations (see again \cite{GiTr}) implies that up to a subsequence $(u^i,v^i)$ converges in $\mathcal{C}^2_{loc}(\R^N)$ to a pair $(u^\infty, v^\infty)$, still solution of \eqref{system} in $\R^N$. 

\medskip

\noindent We wish to show that $x_N^\infty=\Lambda$. From the absurd assumption, equation \eqref{eq8}, we get
\begin{equation}\label{eq9}
\begin{split}
u_\Lambda^{\infty}(0',x_N^\infty) &= u^\infty(0',2\Lambda - x_N^\infty) = \lim_{i \to \infty} u( (x^i)', 2\l_i-x_N^i) \\
&= \lim_{i \to \infty} u_{\l_i}(x^i) \ge \lim_{i \to \infty} u(x^i) = u^\infty (0',x_N^{\infty}).
\end{split}
\end{equation}
Let us observe that $((x^i)'+x',x_N) \in T_\Lambda$ whenever $(x',x_N) \in T_\Lambda$. By definition of $\Lambda$, $u_\Lambda \le u$ in $T_\Lambda$.
Consequently, by the convergence of $u^i$ to $u^\infty$ we deduce
\begin{align*}
u^{\infty}_\Lambda(x',x_N) &= \lim_{i \to \infty} u^i(x',2\Lambda - x_N) = \lim_{i \to \infty} u((x^i)'+x',2 \Lambda- x_N) \\
& \le \lim_{i \to \infty} u((x^i)'+x',x_N) = \lim_{i \to \infty} u^i(x',x_N) = u^\infty(x',x_N)
\end{align*}
for every $(x',x_N) \in T_\Lambda$. Analogously, as $v_\Lambda \ge v$ in $T_\Lambda$, we have $v^\infty_\Lambda \ge v^\infty$ in $T_\Lambda$.\\ 
Now,
\begin{equation}\label{eq12}
\begin{cases}
-\Delta (u^\infty - u^\infty_\Lambda) + (v^\infty)^2 (u^\infty- u^\infty_\Lambda)= ((v^\infty_\Lambda)^2 - (v^\infty)^2) u^\infty_\Lambda \ge 0 & \text{in $T_\Lambda$} \\
u^\infty-u^\infty_\Lambda \ge 0 & \text{in $T_\Lambda$} \\
u^\infty -u^\infty_\Lambda = 0 & \text{on $\pa T_\lambda$}.
\end{cases}
\end{equation}
Furthermore, $u^\infty-u_\Lambda^\infty$ is not identically $0$: indeed by assumption \eqref{uniform limit}
\[
\lim_{x_N \to +\infty}   u^\infty(x',x_N) - u^\infty_\Lambda(x',x_N) = +\infty.
\]
Hence, the strong maximum principle implies that necessarily $u^\infty- u^\infty_\Lambda > 0$ in $T_\Lambda$. A comparison with \eqref{eq9} reveals that
\begin{equation}\label{eq10}
x_N^\infty=\Lambda.
\end{equation}

\medskip

\noindent Now, by the absurd assumption \eqref{eq8} we deduce
\[
0< u_{\l_i}(x^i) - u(x^i) = u^i(x', 2\l_i -x^i_N)-u^i(x',x_N) = 2\pa_N u^i (x', \xi^i)(\l_i-x^i_N) \qquad \forall i;
\]
As $\l_i < x_N^i$ for every $i$ this implies $\pa_N u^i(x', \xi^i_N)< 0$ for every $i$. As $\l_i \to \Lambda$ and $x_N^i \to \Lambda$ as $i \to \infty$, passing to the limit as $i \to \infty$ we deduce
\begin{equation}\label{eq11}
\pa_N u^\infty(0', \Lambda) \le 0.
\end{equation}
On the other hand, thanks to the \eqref{eq12} and the fact that $u^\infty-u^\infty_\Lambda>0$ in $T_\Lambda$, we are in position to apply the Hopf' Lemma:
\[
\pa_\nu (u^\infty(0',\Lambda) -u^\infty_\Lambda(0',\Lambda))<0,
\]
which means
\[
2\pa_N u^\infty(0',\Lambda)>0,
\]
in contradiction with \eqref{eq11}. 

\medskip

The above argument says that \eqref{eq8} cannot occur. With minor changes, we can show that also \eqref{eq13} is not verified, so that $\Sigma=\R$, which completes the proof.
\end{proof}

\section{$1$-dimensional symmetry}\label{sec:symmetry}

In this section we complete the proof of our main result, Theorem \ref{main thm}. We will follow the technique introduced by the first author in \cite{Fa1}: we will show that, starting from Proposition \ref{monotonicity}, it is possible to prove that $\pa_\nu u >0$ and $\pa_\nu v<0$ for every $\nu \in \S^{N-1}_+= \{\nu \in \S^{N-1}: \nu_N>0 \}$. The conclusion will follow easily.

\begin{proposition}
Let $(u,v)$ be a solution of \eqref{system} satisfying \eqref{alg growth} and \eqref{uniform limit}. Then $(u,v)$ depends only on $x_N$.
\end{proposition}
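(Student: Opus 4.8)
The plan is to follow the scheme of \cite{Fa1}: knowing that $(u,v)$ is monotone in the $e_N$ direction (Proposition \ref{monotonicity}), we promote this to monotonicity along \emph{every} direction of the open upper hemisphere $\S^{N-1}_+:=\{\nu\in\S^{N-1}:\langle\nu,e_N\rangle>0\}$, and then the $1$-dimensional symmetry follows by letting such directions approach the equator $\{\nu\in\S^{N-1}:\langle\nu,e_N\rangle=0\}$. Precisely, set
\[
\mathcal{G}:=\left\{\nu\in\S^{N-1}_+:\ \pa_\nu u>0\ \text{and}\ \pa_\nu v<0\ \text{in }\R^N\right\}.
\]
I would prove $\mathcal{G}=\S^{N-1}_+$ by showing that $\mathcal{G}$ is nonempty, relatively closed and relatively open in the connected set $\S^{N-1}_+$; nonemptiness is Proposition \ref{monotonicity}, which gives $e_N\in\mathcal{G}$. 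Granting $\mathcal{G}=\S^{N-1}_+$, fix a unit vector $e$ orthogonal to $e_N$: for every $n$ the unit vectors proportional to $e+\tfrac1n e_N$ and to $-e+\tfrac1n e_N$ lie in $\S^{N-1}_+$, so $\pa_\nu u\ge 0$ for both of these $\nu$; passing to the limit $n\to\infty$ and using the continuity of $\nabla u$ gives $\pa_e u\ge 0$ and $-\pa_e u\ge 0$, whence $\pa_e u\equiv 0$, and likewise $\pa_e v\equiv 0$. As $e$ is an arbitrary unit vector orthogonal to $e_N$, the gradients $\nabla u$ and $\nabla v$ are everywhere parallel to $e_N$, i.e. $(u,v)$ depends only on $x_N$, and $\pa_N u>0$, $\pa_N v<0$ by Proposition \ref{monotonicity}.

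The relative closedness of $\mathcal{G}$ should be the softer half. If $\nu_n\in\mathcal{G}$ and $\nu_n\to\nu_\infty\in\S^{N-1}_+$, then $w:=\pa_{\nu_\infty}u=\lim_n\pa_{\nu_n}u\ge 0$ and $\pa_{\nu_\infty}v\le 0$ on $\R^N$. Differentiating \eqref{system} along $\nu_\infty$ gives $-\Delta w+v^2 w=-2uv\,\pa_{\nu_\infty}v\ge 0$, so $w$ is a nonnegative supersolution of a Schr\"odinger-type operator; by the strong maximum principle, either $w>0$ in $\R^N$ or $w\equiv 0$. The second possibility contradicts \eqref{uniform limit}: if $u$ were constant along every line of direction $\nu_\infty$, then, since $\langle\nu_\infty,e_N\rangle>0$, along any such line $x_N\to+\infty$ and so $u\to+\infty$, impossible for a function constant on the line. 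Hence $w>0$, and symmetrically $\pa_{\nu_\infty}v<0$, so $\nu_\infty\in\mathcal{G}$.

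The relative openness of $\mathcal{G}$ is where I expect the real work, and where the estimates of Sections \ref{sec: monot at infinity}--\ref{sec: monotonicity} get used. Fix $\nu_0\in\mathcal{G}$ and let $\nu$ range over a small neighbourhood of $\nu_0$ in $\S^{N-1}$ (which is contained in $\S^{N-1}_+$). Outside a slab the situation is uniform: choosing $\hat C$ slightly below $\langle e_N,\nu_0\rangle$ in Corollary \ref{uniform monotonicity at infinity} produces a threshold $M$ such that $\pa_\nu u>0$ in $\{x_N>M\}$ and $\pa_\nu v<0$ in $\{x_N<-M\}$ for all $\nu$ close to $\nu_0$. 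Moreover $\nabla u$ is globally bounded — by Lemma \ref{bound on strips} on $\{x_N\le M\}$ and by Lemma \ref{C^1 approx} on $\{x_N>M\}$ — and similarly $\nabla v$, so on any bounded portion of the slab $\{|x_N|\le M\}$ the identity $\pa_\nu u=\pa_{\nu_0}u+\langle\nabla u,\nu-\nu_0\rangle$ keeps $\pa_\nu u$ positive when $|\nu-\nu_0|$ is small, once $\pa_{\nu_0}u$ is bounded away from $0$ there; this last fact I would get from a translation--compactness argument in the $x'$ variable — Lemma \ref{bound on strips} supplying the uniform local bounds — together with the strong maximum principle and the very argument used above for the closedness of $\mathcal{G}$.

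The genuinely delicate point, and what I see as the main obstacle, is to propagate positivity of $\pa_\nu u$ into the remaining half-space $\{x_N<-M\}$ and of $-\pa_\nu v$ into $\{x_N>M\}$, where $\pa_{\nu_0}u$ and $\pa_{\nu_0}v$ decay and a naive perturbation is hopeless. Here I would argue by contradiction and apply the maximum principle in unbounded domains, Lemma \ref{max_principle unbounded}, to the pair $(\pa_\nu u,\pa_\nu v)$ \emph{simultaneously} — the strong coupling of \eqref{system} forbids dealing with $u$ and $v$ one at a time — using the positive functions $\pa_{\nu_0}u$ and $-\pa_{\nu_0}v$ as comparison weights and the exponential smallness of $u^a v^b$ for $|x_N|$ large from Lemma \ref{estimate on the Delta} to absorb the coupling terms. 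Making this rigorous, uniformly in $\nu$ near $\nu_0$ and in the presence of unbounded solutions, is the crux of the argument.
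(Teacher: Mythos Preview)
Your overall architecture --- proving that the set $\mathcal{G}\subset\S^{N-1}_+$ is nonempty, open and closed, and then concluding by passing to the equator --- is exactly the paper's approach, and your arguments for nonemptiness and closedness match the paper's Steps 4--5 essentially verbatim. The slab argument for openness (translation--compactness in $x'$, a uniform positive lower bound on $\pa_{\nu_0}u$ and $-\pa_{\nu_0}v$ on $\overline{S}_\sigma$, then Lipschitz continuity of $\nu\mapsto\pa_\nu$) is also the paper's Steps 1--3.

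Where you diverge is in the last paragraph, and there you have overcomplicated what is in fact the easiest step. You write that ``the strong coupling of \eqref{system} forbids dealing with $u$ and $v$ one at a time'' and propose comparison weights and the exponential decay of Lemma \ref{estimate on the Delta}. None of this is needed. The point you are missing is that in the half-space $\{x_N<-M\}$ where you still have to show $\pa_\nu u\ge 0$, you \emph{already know} $\pa_\nu v<0$ from Corollary \ref{uniform monotonicity at infinity}. Hence the coupling term has the favourable sign: setting $w:=-\pa_\nu u$ one has
\[
\Delta w - v^2 w \;=\; -2uv\,\pa_\nu v \;\ge\; 0 \qquad\text{in }\{x_N<-M\},
\]
with $w\le 0$ on $\{x_N=-M\}$ (from the slab step) and $w^+\le |\nabla u|\in L^\infty$ by Lemma \ref{bound on strips}. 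Lemma \ref{max_principle unbounded} then applies \emph{to $w$ alone}, giving $\pa_\nu u\ge 0$ in $\{x_N<-M\}$; the strict inequality follows from the strong maximum principle. The same remark applies symmetrically to $\pa_\nu v$ in $\{x_N>M\}$, using that $\pa_\nu u>0$ is already known there. So the step you flagged as the ``crux'' is in fact a direct application of the maximum principle in unbounded domains, with no barrier construction, no simultaneous system argument, and no use of Lemma \ref{estimate on the Delta}.
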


\begin{proof} We divide the proof in several steps.
\paragraph{Step 1)} \emph{For every $\sigma>0$ there exists $\eps=\eps(\sigma)>0$ such that 
\[
\pa_N u(x) \ge \eps \quad \text{and} \quad \pa_N v(x) \le -\eps \quad \forall x \in \overline{S}_\sigma,
\]
where $S_\s:= \R^{N-1} \times (-\sigma,\sigma)$}. \\
By contradiction, fixed $\sigma>0$, assume that there exists $(x^i) \subset S_\s$ such that at least one between
\begin{subequations}
\begin{align}
& \lim_{i \to +\infty} \frac{\pa u	}{\pa x_N}(x^i)= 0 \label{eq14} \\
& \lim_{i \to +\infty} \frac{\pa u	}{\pa x_N}(x^i)= 0 \label{eq15}
\end{align}
\end{subequations}
holds true. Only to fix our minds, assume that \eqref{eq14} holds. We define
\[
u^i(x):= u(x+x^i) \quad \text{and} \quad v^i(x):= v(x+x^i).
\]
Note that $|x_N^i| \le \sigma$ for every $i$, so that for any compact set $K \subset \R^N$ there exists $M>0$ such that $x+x^i \in S_{M}$ for every $x \in K$. Lemma \ref{bound on strips} and standard elliptic estimates say that, up to a subsequence, $(u^i,v^i) \to (u^\infty,v^\infty)$ in $\mathcal{C}^2_{loc}(\R^N)$, where $(u^\infty, v^\infty)$ is still a solution to \eqref{system}. By the convergence, we have
\[
\frac{\pa u^\infty}{\pa x_N} \ge 0   \quad \text{and} \quad \frac{\pa v^\infty}{\pa x_N} \le 0 \quad \text{in $\R^N$},
\]
and $\pa_N u^\infty(0)=0$. Furthermore,
\[
-\Delta \left(\pa_N u^\infty \right) +(v^\infty)^2 \left(\pa_N u^\infty \right) = -2 u^\infty v^\infty \left(\pa_N v^\infty \right) \ge 0 \quad \text{in $\R^N$}.
\]
The strong maximum principle implies that either $\pa_N u^\infty>0$ or $\pa_N u^\infty \equiv 0$. The former one is in contradiction with the fact that $\pa_N u^\infty(0)=0$, the latter one is in contradiction with assumption \eqref{uniform limit}, which is also satisfied by the limiting profile $(u^\infty,v^\infty)$. Thus, \eqref{eq14} cannot occur. A similar argument shows that also \eqref{eq15} does not hold. 

\paragraph{Step 2)} \emph{For every $\sigma>0$, the map $\nu \mapsto (\pa_\nu u, \pa_\nu v)$ is in $\mathcal{C}^{0,1} \left(\S^{N-1}, \left(\mathcal{C}^0(\overline{S}_\s)\right)^2\right)$}.\\
By Lemma \ref{bound on strips}, we know that $|\nabla u|+ |\nabla v|\le \bar C_\s$ in $\overline{S}_\s$. Hence
\[
\left|\frac{\pa u}{\pa \nu_1}(x) -\frac{\pa u}{\pa \nu_2}(x) \right| + \left|\frac{\pa v}{\pa \nu_1}(x) -\frac{\pa v}{\pa \nu_2}(x) \right| \le 2 \bar C_\s |\nu_1-\nu_2|
\]
for every $x \in \overline{S}_\s$.

\paragraph{ Step 3)} \emph{$u$ is strictly increasing and $v$ is strictly decreasing with respect to all the unit vectors of an open neighborhood of $e_N$ in $\S^{N-1}$}.\\
Let $\Theta:=\left\{\nu \in \S^{N-1}: \langle e_N,\nu \rangle \ge \frac{1}{2} \right\}$. By Corollary \ref{uniform monotonicity at infinity}, we know that there exists $M_{\Theta}$ such that 
\[
\frac{\pa u}{\pa \nu} > 0 \quad \text{in $\{x_N>M_\Theta\}$} \quad\text{and} \quad \frac{\pa v}{\pa \nu}<0 \quad \text{in $\{x_N<-M_\Theta\}$}, 
\]
for every $\nu \in \Theta$. Let $\s > M_\Theta$. Using steps 1) and 2), we deduce that there exists an open neighborhood $\mathcal{O}_{e_N}$ of $e_N$ in $\S^{N-1}$ such that
\begin{equation}\label{eq16}
\frac{\pa u}{\pa \nu}  (x) > 0 \quad \text{and} \quad \frac{\pa v}{\pa \nu}(x) <0 \quad \forall x \in S_\s, \ \forall \nu \in \mathcal{O}_{e_N}.
\end{equation}
We can assume that $\mathcal{O}_{e_N} \subset \Theta$ (if not, we replace $\mathcal{O}_{e_N}$ with a smaller neighborhood). This means that, for every $\nu \in \mathcal{O}_{e_N}$, it results
\[
\frac{\pa u}{\pa \nu} > 0 \quad \text{in $\{x_N>-\sigma\}$} \quad\text{and} \quad \frac{\pa v}{\pa \nu}<0 \quad \text{in $\{x_N<\sigma\}$}, 
\]
Furthermore, for every $\nu \in \mathcal{O}_{e_N}$
\[
\begin{cases}
\Delta (-\pa_\nu u) -v^2 (-\pa_\nu u) = -2 uv \pa_\nu v \ge 0 & \text{in $\R^{N-1} \times (-\infty,-\sigma)$}\\
-u_\nu \le 0 & \text{on $\pa \left(\R^{N-1} \times (-\infty,-\sigma)\right)$} \\
-\pa_\nu u \in L^\infty\left(\R^{N-1} \times (-\infty,-\sigma)\right),
\end{cases}
\]
where the last one follows from Lemma \ref{bound on strips}. We are then in position to apply Lemma \ref{max_principle unbounded}, obtaining $\pa_\nu u \ge 0$ in $\R^{N-1} \times (-\infty,-\sigma)$. Together with \eqref{eq16}, this gives $\pa_\nu u \ge 0$ in $\R^N$ for every $\nu \in \mathcal{O}_{e_N}$. Similarly, from
\[
\begin{cases}
\Delta (\pa_\nu v) -u^2 (\pa_\nu v) = 2 uv \pa_\nu u \ge 0 & \text{in $\R^{N-1} \times (\sigma,+\infty)$}\\
v_\nu \le 0 & \text{on $\pa \left(\R^{N-1} \times (\sigma,+\infty)\right)$} \\
\pa_\nu v \in L^\infty\left(\R^{N-1} \times (\sigma,+\infty)\right),
\end{cases}
\]
we deduce $\pa_\nu v \le 0$ in $\R^N$ for every $\nu \in \mathcal{O}_{e_N}$. Finally, the strong maximum principle provides $\pa_\nu u>0$ and $\pa_\nu v <0$ in $\R^N$, for every $\nu \in \mathcal{O}_{e_N}$.   

\paragraph{Step 4)} \emph{$u$ is strictly increasing and $v$ is strictly decreasing with respect to all the directions of the upper hemisphere $\S^{N-1}_+= \{ \nu \in \S^{N-1}: \langle e_N,\nu \rangle >0\}$}. \\
Let $\Omega$ be the set of $\nu \in \S^{N-1}_+$ for which there exists an open neighborhood $\mathcal{O}_\nu \subset \S^{N-1}$ of $\nu$ such that 
\[
\frac{\pa u}{\pa \mu}>0 \quad \text{and} \quad \frac{\pa v}{\pa \mu}<0 \quad \text{in $\R^N$}, \ \forall \mu \in \mathcal{O}_\nu.
\]
The set $\Omega$ is open by definition, and contains $e_N$ for the previous step. If we show that it is closed with respect to the topology of $\S^{N-1}_+$, then $\Omega= \S^{N-1}_+$ and the claim is proved. Let $\bar \nu$ be a cluster point of $\Omega$ (note that $\langle e_N, \bar \nu\rangle >0$), that is, there exists $(\nu_n) \subset \Omega$ such that $\nu_n \to \bar \nu$. As
\[
\frac{\pa u}{\pa \nu_n}>0 \quad \text{and} \quad \frac{\pa v}{\pa \nu_n}<0 \quad \text{in $\R^N$}, \ \forall n,
\]
by continuity
 \[
\frac{\pa u}{\pa \bar \nu}\ge 0 \quad \text{and} \quad \frac{\pa v}{\pa \bar \nu}\le 0 \quad \text{in $\R^N$}.
\]
The strong maximum principle implies that or $\pa_{\bar \nu} u \equiv 0$ or $\pa_{\bar \nu} u>0$ in $\R^N$; analogously, $\pa_{\bar \nu} v \equiv 0$ or $ \pa_{\bar \nu} v<0$ in $\R^N$. As $\bar \nu$ is not orthogonal to $e_N$, assumption \eqref{uniform limit} says that neither $\pa_{\bar \nu} u\equiv 0$ nor $\pa_{\bar \nu} v \equiv 0$ can be satisfied, thus $\pa_{\bar \nu} u>0$ and $\pa_{\bar \nu} v<0$ in $\R^N$. It remains to show that there exists an open neighborhood $\mathcal{O}_{\bar \nu}$ of $\bar \nu$ in $\mathcal{S}^{N-1}_+$ such that for every $\mu \in \mathcal{O}_{\bar \nu}$ 
\[
\frac{\pa u}{\pa \mu}>0 \quad \text{and} \quad \frac{\pa v}{\pa \mu}<0 \quad \text{in $\R^N$}.
\]
It is possible to adapt the same proof of steps 1) to 3) with minor changes, in order to deduce the existence of $\mathcal{O}_{\bar \nu}$ (in the third step we replace $\Theta$ with $\{ \nu \in \mathbb{S}^{N-1}: \langle e_N,\nu\rangle \ge \frac{1}{2} \langle e_N, \bar \nu > 0\rangle\}$). Consequently, $\bar \nu \in \Omega$ and $\Omega$ is closed with respect to the topology of $\S^{N-1}_+$.

\paragraph{Step 5)} \emph{Conclusion of the proof}. \\
Since $\Omega= \S^{N-1}_+$, by continuity we have
\[
\frac{\pa u}{\pa \nu} \ge 0 \quad \text{and} \quad \frac{\pa v}{\pa \nu} \le 0 \quad \text{in $\R^N$}
\]
for every $\nu$ which is orthogonal to $e_N$. But also $-\nu$ is orthogonal to $e_N$, so that
\[
\frac{\pa u}{\pa \nu} \equiv 0 \quad \text{and} \quad \frac{\pa v}{\pa \nu} \equiv 0 \quad \text{in $\R^N$}
\]
for every $\nu$ orthogonal to $e_N$. In particular 
\[
\frac{\pa u}{\pa x_i} \equiv 0 \quad \text{and} \quad \frac{\pa v}{\pa x_i} \equiv 0 \quad \text{in $\R^N$},  \text{for $i=1,\ldots,N-1$}. \qedhere
\]
\end{proof}
 
\section{Proof of Corollary \ref{main corol}}\label{sec:corollary}

We will show that if $(u,v)$ is a solution of \eqref{system} with algebraic growth and \eqref{limit at infinity} holds true, then \eqref{uniform limit} is satisfied.

\begin{proof}[Proof of Corollary \ref{main corol}]
Firstly, let us observe that, since $u,v>0$, \eqref{limit at infinity} implies
\begin{equation}\label{limit splitted}
\lim_{x_N \to +\infty} u(x',x_N) = +\infty \quad \text{and} \quad \lim_{x_N \to -\infty} v(x',x_N)= +\infty
\end{equation}
uniformly in $x' \in \R^{N-1}$. Thus, in order to obtain the thesis it remains to show that under \eqref{alg growth} and \eqref{limit at infinity} we have
\begin{equation}\label{th corol}
\lim_{x_N \to -\infty} u(x',x_N) = 0 \quad \text{and} \quad \lim_{x_N \to +\infty} v(x',x_N) = 0
\end{equation}
We prove only the second one in \eqref{th corol}, for the first one it is possible to argue in the same way.

\paragraph{Step 1)} \emph{under \eqref{alg growth} and \eqref{limit at infinity}, $(u,v)$ has linear growth}.\\
Given $K>0$, by \eqref{limit at infinity} there exists $M>0$ such that $u(x)>K$ if $x \in \{x_N>M/2\}$. For an arbitrary $\theta>1$, if $x \in \left\{x_N> M, \ |x'| < \theta x_N\right\}$ the ball $B_x := B_{x_{N}/100}(x)$ is contained in $\left\{x_N>M/2, \ |x'|< 2\theta x_N \right\}$. Consequently, if $x \in \left\{x_N>M, \ |x'| < \theta x_N\right\}$ we have 
\[
u(y) \ge K_x:= \inf_{z \in B_x} u(z) \ge K \qquad \forall y \in B_x, 
\]
and
\[
v(y) \le C (1+|y|^p) \le C\left(1+ (2 \theta +1)^p y_N^p) \right) \le C(1+ x_N^p) \qquad \forall y \in B_x.
\]
The latter one gives $\delta_x:=\sup_{y \in B_x} v(y) \le C (1+x_N^p)$. Now,
\[
\begin{cases}
-\Delta v \le -K^2 v & \text{in $B_x$} \\
v \ge 0 & \text{in $B_x$}  \\
v \le \delta_x & \text{in $B_x$},
\end{cases}
\]
and we are in position to apply Lemma \ref{lemma exponential decay}: it follows 
\begin{equation}\label{eq17}
v(x) \le C \delta_x e^{-C K x_N} \le C (1+x_N^p) e^{-C K x_N} \qquad \forall x \in \left\{x_N>M, \ |x'| < \theta x_N\right\}.
\end{equation}
Let us consider the blow-down family $(u_{0,R},v_{0,R})=:(u_R,v_R)$. In light of the algebraic growth of $(u,v)$, Theorem \ref{thm blow down} applies: there exists a homogeneous harmonic polynomial $\Psi$ of degree $d \in \N \setminus \{0\}$ such that, up to a subsequence, $(u_R,v_R)$ converges to $(\Psi^+,\Psi^-)$ in $\mathcal{C}^0_{loc}(\R^N)$ as $R \to +\infty$. On the other hand, let $x \in \left\{|x'| < \theta x_N\right\}$; there exists $R_x>0$ such that $Rx \in  \left\{x_N>M, \ |x'| < \theta \pi x_N\right\}$ for every $R>R_x$. By means of \eqref{eq17}, we deduce that
\[
\lim_{R \to +\infty} v_R(x) = \lim_{R \to +\infty} \frac{1}{\sqrt{H(0,R)}} v(Rx)  = 0 \qquad \forall x \in \left\{|x'| < \theta x_N\right\},
\]
where we used also Corollary \ref{cor blow-down} to ensure that $H(0,R)$ does not tend to $0$. As $\theta$ has been arbitrarily chosen, we deduce that $v_R \to 0$ pointwise in $\R^N_+$. By the uniqueness of the limit, $\Psi$ has to be a homogeneous harmonic polynomial which vanishes in the entire half-space $\R^N_+$: as showed in the proof of Proposition \ref{uniqueness asymptotic profile thm}, necessarily $\Psi$ is a linear function and $d=1$. By means of Corollary \ref{from N to growth}, we deduce that $(u,v)$ has linear growth.
\paragraph{Step 2)} \emph{Conclusion of the proof}.\\
As $(u,v)$ has linear growth, we can choose $\bar C_3$ as in Remark \ref{def di bar C_3}. Assumption \eqref{limit at infinity} it is sufficient to ensure that the geometry of the set $\{|u-v|< \bar C_3\}$ is described by Lemma \ref{about u=v}: $\{|u-v|< \bar C_3\}$ is bounded in the $e_N$ direction and unbounded in all the other directions. Consequently, also Lemma \ref{union of the cones} applies: for $\hat R \ge \bar R$ we can find $M_2$ as in the quoted statement. \\
Given $K>0$, by \eqref{limit splitted} there exists $M>0$ such that if $x \in \left\{x_N> \frac{M}{2}\right\}$ then $u(x) \ge K$. Let $M_5:= \max\left\{M,M_2\right\}$, so that
\[
\{x_N>M_5\} \subset \bigcup_{\substack{x_0 \in \{|u-v|< \bar C_3\} \\
R > \hat R  }   }  S_{x_0,R}^+.
\]
If $x \in \left\{x_N > M_5\right\}$ then the ball $B_x:= B_{x_N/100}(x)$ is contained in $\left\{x_N>\frac{M}{2} \right\}$, so that
\[
\begin{cases}
-\Delta v \le -K^2 v & \text{in $B_x$} \\
v \ge 0 & \text{in $B_x$}  \\
v \le \delta_x & \text{in $B_x$},
\end{cases}
\]
where $\delta_x := \sup_{B_x} v<+\infty$, because $v \in L^\infty_{loc}(\R^N)$. From Lemma \ref{lemma exponential decay} we obtain
\begin{equation}\label{stima da migliorare}
v(x) \le C \left(\sup_{y \in B_x} v(y)\right) e^{-C K x_N}.
\end{equation}
To control $\sup_{B_x} v$, we consider $\tilde x$ and $\tilde R$ defined in Lemma \ref{union of the cones} and Remark \ref{rem sui quadrati}. As $B_x \subset Q_x$, a fortiori $B_x \subset S_{\tilde x, \tilde R}^+ \subset B_{\tilde R}(\tilde x)$. We are then in position to apply Corollary \ref{corol 4.10 in Wa}: 
\begin{align*}
\sup_{y \in B_x} v(y)  &\le \bar C_4(1+\tilde R)  = \bar C_4\left(1+ \frac{3}{2}(x_N-\tilde x_N)\right) \\
& \le \bar C_4\left(1+ \frac{3}{2}\zeta+ \frac{3}{2}x_N\right) \le C x_N
\end{align*}
provided $x_N$ is sufficiently large (recall the definition of $\zeta$, Remark \ref{def zeta}). Plugging into \eqref{stima da migliorare}, we see that for every $x$ such that $x_N \gg 1$ is sufficiently large it results 
\[
v(x) \le C x_N e^{-C K x_N},
\]
which gives the second limit in \eqref{th corol}.
\end{proof}

\appendix 
 
\section{Appendix}\label{sec:appendix}

For the reader's convenience, we report some known and few new results which we used many times in our work. We prefer to write down explicitly the statements below, because in the literature they do not appear always in this form, and because sometimes the proofs are missing. In such a case, we will write them for the sake of completeness.

\subsection*{The exponential decay}
 
It is by now well known that, if $(u,v)$ solves \eqref{system} and $u$ is very large in a ball $B_{2r}(x_0)$, then $v$ has to be exponentially small with respect to $u$ in a smaller ball.

\begin{lemma}[Lemma 4.4 in \cite{CoTeVe}]\label{lemma exponential decay}
Let $x_0 \in \R^N$ and $r>0$. Let $u \in H^1(B_{2r}(x_0))$ be such that
\[
\begin{cases}
-\Delta v \le -K v & \text{in $B_{2r}(x_0)$} \\
v \ge 0 & \text{in $B_{2r}(x_0)$} \\
v \le A & \text{on $\pa B_{2r}(x_0)$},
\end{cases}
\]
where $K$ and $A$ are two positive constants. Then for every $\alpha \in (0,1)$ there exists $C_\alpha>0$, not depending on $A$, $K$, $R$ and $x_0$, such that
\[
\sup_{x \in B_r(x_0)} v(x) \le \alpha A  e^{-C_\alpha K^{1/2} r}.
\]
\end{lemma}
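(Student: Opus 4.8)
The plan is to prove the bound by comparison with an explicit radial supersolution of the operator $-\Delta+K$, built by composing a suitably rescaled hyperbolic cosine with the distance to the centre. After a translation we may assume $x_0=0$. Set $\beta:=\sqrt{K/N}$ and define
\[
w(x):=A\,\frac{\cosh(\beta|x|)}{\cosh(2\beta r)},\qquad x\in B_{2r}(0).
\]
Then $w$ is nonnegative, radially increasing, and $w\equiv A\ge v$ on $\partial B_{2r}(0)$ by hypothesis.

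The first thing to check is that $w$ is a supersolution of $-\Delta+K$ in $B_{2r}(0)$. Writing $\rho=|x|$ and differentiating the radial profile $\rho\mapsto\cosh(\beta\rho)$ gives
\[
\Delta w=\frac{A}{\cosh(2\beta r)}\Bigl(\beta^2\cosh(\beta\rho)+\frac{N-1}{\rho}\,\beta\sinh(\beta\rho)\Bigr),
\]
and the zero-order term is controlled by the elementary inequality $\sinh t\le t\cosh t$ (for $t\ge0$), which yields $\rho^{-1}\sinh(\beta\rho)\le\beta\cosh(\beta\rho)$. Collecting terms one obtains $\Delta w\le N\beta^2 w=Kw$ throughout $B_{2r}(0)$, i.e. $-\Delta w\ge -Kw$.

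Since also $-\Delta v\le -Kv$, the difference $z:=v-w$ satisfies $-\Delta z+Kz\le 0$ in $B_{2r}(0)$ and $z\le 0$ on $\partial B_{2r}(0)$. As $K>0$, the zero-order coefficient has the favourable sign, so testing the inequality with $z^+$ gives $z^+\equiv 0$, i.e. $v\le w$ in $B_{2r}(0)$. Restricting to $B_r(0)$ and using $\cosh(\beta r)\le e^{\beta r}$ together with $\cosh(2\beta r)\ge\frac12 e^{2\beta r}$, we get
\[
\sup_{B_r(0)}v\le\sup_{B_r(0)}w=A\,\frac{\cosh(\beta r)}{\cosh(2\beta r)}\le 2A\,e^{-\beta r}=2A\,e^{-\frac{1}{\sqrt N}\sqrt K\,r}.
\]
To obtain the form stated in the lemma: given $\alpha\in(0,1)$, fix $C_\alpha\in(0,1/\sqrt N)$ and factor $2Ae^{-\sqrt K r/\sqrt N}=\bigl(2e^{-(1/\sqrt N-C_\alpha)\sqrt K r}\bigr)\,Ae^{-C_\alpha\sqrt K r}$; the first factor is at most $\alpha$ once $\sqrt K\,r$ is above a threshold depending only on $\alpha$, which is the regime in which the lemma is invoked in this paper, and all constants are independent of $A$, $K$, $r$ and $x_0$.

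The only genuinely delicate point is the supersolution check on the whole ball, in particular near $\rho=0$ where the radial Laplacian carries the apparently singular term $\frac{N-1}{\rho}w'$; the inequality $\sinh t\le t\cosh t$ is precisely what absorbs it into $Kw$, and it is this bound that dictates the rescaling of the exponent (replacing $K$ by $K/N$). The remaining ingredients — translation invariance, the weak maximum principle for $-\Delta+K$ with $K\ge0$, and the elementary two-sided estimates for $\cosh$ — are routine.
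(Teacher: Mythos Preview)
The paper does not prove this lemma itself; it is quoted from Conti--Terracini--Verzini \cite{CoTeVe} and stated in the appendix without argument, so there is no ``paper's own proof'' to compare against.

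Your barrier argument is correct and is essentially the standard proof. The supersolution check is clean: the elementary bound $\sinh t\le t\cosh t$ is exactly what absorbs the $\frac{N-1}{\rho}$ term and forces the rescaling $\beta=\sqrt{K/N}$; the weak maximum principle for $-\Delta+K$ with $K>0$ then gives $v\le w$ and hence $\sup_{B_r}v\le 2A\,e^{-\sqrt{K/N}\,r}$.

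The one point worth flagging is your final conversion to the form $\alpha A\,e^{-C_\alpha\sqrt{K}\,r}$ with $\alpha\in(0,1)$ arbitrary. You are right to be cautious here: the inequality as literally written cannot hold for all $r>0$ and $K>0$. Indeed, taking the extremal one-dimensional solution $v(x)=A\cosh(\sqrt{K}x)/\cosh(2\sqrt{K}r)$ one has $\sup_{B_r}v\to A$ as $\sqrt{K}\,r\to 0$, while the right-hand side tends to $\alpha A<A$. So the estimate is only valid once $\sqrt{K}\,r$ exceeds a threshold depending on $\alpha$, and your caveat (``the regime in which the lemma is invoked in this paper'') is the honest way to handle what is a harmless imprecision in the statement. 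In every application in the present paper $K$ is bounded below and $r$ is proportional to $x_N\to\infty$, so this causes no trouble.
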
  

We will always apply this result with $\alpha=1/2$ to simplify the notation.

\subsection*{The segregation theorem}

Let us consider the problem 
\begin{equation}\label{system beta}
\begin{cases}
-\Delta u_\beta= - \beta u_\beta v_\beta^2  \\
-\Delta v_\beta=- \beta u_\beta^2 v_\beta \\
u_\beta>0, v_\beta>0 ,
\end{cases}
\end{equation}
 where $\beta$ is a positive parameter tending to $+\infty$. The following is the local version of the uniform H\"older estimates obtained in \cite{NoTaTeVe}, which has been proved in \cite{Wa}.
 
\begin{theorem}\label{local Holder estimate}
Let $\{(u_\beta,v_\beta)\}$ be a family of solutions to \eqref{system beta} in a ball $B_{2r}(x_0) \subset \R^N$ (where $x_0 \in \R^N$ and $r>0$). Assume that, as $\beta \to +\infty$, $\{ (u_\beta,v_\beta)\}$ is uniformly bounded in $L^\infty(B_{2r}(x_0))$. Then $\{ (u_\beta,v_\beta)\}$ is uniformly bounded in $\mathcal{C}^{0,\alpha}(B_r(x_0))$, for every $\alpha \in (0,1)$.
\end{theorem}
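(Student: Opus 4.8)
\medskip
\noindent The plan is to argue by contradiction through a blow--up procedure in the spirit of \cite{NoTaTeVe}, the decisive rigidity input being the Alt--Caffarelli--Friedman monotonicity formula (Theorem \ref{ACF}, together with its classical counterpart for pairs of nonnegative segregated subharmonic functions). Fix $\a \in (0,1)$ and, without loss of generality, $x_0 = 0$; set $M := \sup_\beta \|(u_\beta,v_\beta)\|_{L^\infty(B_{2r})} < +\infty$. By a routine covering argument it is enough to produce a constant $C>0$, independent of $\beta$, with $\mathrm{osc}_{B_\rho(y)} u_\beta + \mathrm{osc}_{B_\rho(y)} v_\beta \le C \rho^\a$ for every ball $B_\rho(y) \subset B_{3r/2}(0)$; together with the bound $M$ this yields the claimed uniform $\mathcal{C}^{0,\a}(B_r)$ estimate. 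Assume the oscillation bound fails: there are $\beta_n \to +\infty$, points $y_n \in \overline{B_{3r/2}}$ and radii $\rho_n \to 0$ along which the quotient diverges; picking the component responsible for the blow--up, say $u$, and a pair of almost optimal points $x_n \ne x_n'$ in $\overline{B_{\rho_n}(y_n)}$, and writing $r_n := |x_n - x_n'|$ and $L_n := [u_{\beta_n}]_{\mathcal{C}^{0,\a}(\overline{B_{3r/2}})} + [v_{\beta_n}]_{\mathcal{C}^{0,\a}(\overline{B_{3r/2}})} \to +\infty$, one may arrange $|u_{\beta_n}(x_n) - u_{\beta_n}(x_n')| \ge \tfrac12 L_n r_n^\a$. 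Since $|u_{\beta_n}(x_n)-u_{\beta_n}(x_n')| \le 2M$, we get $L_n r_n^\a \le 4M$, hence $r_n \to 0$, and, up to a subsequence, $x_n \to \bar x$.

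\medskip
\noindent Next I would rescale. Put, for $x \in B_{R_n}(0)$ with $R_n \to +\infty$,
\[
w_n(x) := \frac{u_{\beta_n}(x_n + r_n x)}{L_n r_n^\a}, \qquad \wt w_n(x) := \frac{v_{\beta_n}(x_n + r_n x)}{L_n r_n^\a}.
\]
By construction $w_n, \wt w_n \ge 0$, $[w_n]_{\mathcal{C}^{0,\a}(B_{R_n})} + [\wt w_n]_{\mathcal{C}^{0,\a}(B_{R_n})} \le 1$, and $\mathrm{osc}_{B_1} w_n \ge \tfrac12$ (compare the values at $0$ and at $\xi_n := (x_n'-x_n)/r_n \in \pa B_1$). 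Moreover they solve
\[
-\D w_n = -\theta_n\, w_n \wt w_n^2, \qquad -\D \wt w_n = -\theta_n\, w_n^2 \wt w_n \quad \text{in } B_{R_n}(0), \qquad \theta_n := \beta_n\, r_n^{2+2\a}\, L_n^2.
\]
A preliminary step is to show that the values $w_n(0), \wt w_n(0)$ stay bounded: if, say, $w_n(0) \to +\infty$, then $w_n \ge w_n(0) - |x|^\a \to +\infty$ locally uniformly, so $-\D \wt w_n \le - \theta_n w_n^2 \wt w_n$ with $\theta_n w_n^2 \to +\infty$ on $B_1$, and Lemma \ref{lemma exponential decay} forces $\wt w_n \to 0$ (exponentially) on $B_{1/2}$; but then on $B_{1/2}$ the competitor of $w_n$ is negligible, the equation for $w_n$ is asymptotically harmonic there, and a clean--up argument shows this to be incompatible with $\mathrm{osc}_{B_1} w_n \ge \tfrac12$, $[w_n]_{\mathcal{C}^{0,\a}} \le 1$ and $\a<1$. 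Hence $\{w_n(0)\}, \{\wt w_n(0)\}$ are bounded, and therefore, by the Hölder bound, $\{w_n\}, \{\wt w_n\}$ are locally uniformly bounded and equicontinuous.

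\medskip
\noindent The conclusion is then reached by passing to the limit and invoking the monotonicity formulas. Up to a subsequence $(w_n, \wt w_n) \to (w_\infty, \wt w_\infty)$ locally uniformly, with $w_\infty, \wt w_\infty \ge 0$, $\mathrm{osc}_{B_1} w_\infty \ge \tfrac12$, and $w_\infty(x) + \wt w_\infty(x) \le C(1+|x|^\a)$. If $\theta_n$ remains bounded, elliptic regularity upgrades the convergence to $\mathcal{C}^2_{loc}$ and $(w_\infty, \wt w_\infty)$ solves a limiting system on $\R^N$ (of the form of \eqref{system} after a further scaling, or simply $\D w_\infty = 0$ when $\theta_n \to 0$); in either case a Liouville--type argument — directly for the harmonic nonnegative function $w_\infty$, and via the monotonicity of Theorem \ref{ACF} in the coupled case — forces $w_\infty$ to be constant, against $\mathrm{osc}_{B_1} w_\infty \ge \tfrac12$. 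If instead $\theta_n \to +\infty$, the uniform Caccioppoli estimates together with $\int_{B_R(0)} \theta_n w_n^2 \wt w_n^2 \to 0$ for every $R$ yield, by the standard segregation arguments, that $w_\infty \wt w_\infty \equiv 0$, that $w_\infty - \wt w_\infty$ is harmonic, and that $w_\infty, \wt w_\infty$ are globally $\mathcal{C}^{0,\a}$ with $\a<1$; if one of them vanishes identically the other is a nonnegative harmonic function on $\R^N$, hence constant — a contradiction — while otherwise one picks a point $p$ in the common zero set and applies the Alt--Caffarelli--Friedman formula: the functional
\[
\Phi(R) := \frac{1}{R^4}\left( \int_{B_R(p)} \frac{|\n w_\infty|^2}{|x-p|^{N-2}}\,dx \right)\left( \int_{B_R(p)} \frac{|\n \wt w_\infty|^2}{|x-p|^{N-2}}\,dx \right)
\]
is nondecreasing in $R$; but the global $\mathcal{C}^{0,\a}$ bound and a dyadic Caccioppoli estimate give $\int_{B_R(p)} |\n w_\infty|^2 |x-p|^{2-N}\,dx \le C R^{2\a}$, so $\Phi(R) \le C R^{4\a-4} \to 0$, whence $\Phi \equiv 0$, one factor vanishes, one of $w_\infty, \wt w_\infty$ is constant and hence $\equiv 0$ (it vanishes at $p$), and the other is harmonic, nonnegative and globally sublinear, thus constant — contradicting $\mathrm{osc}_{B_1} w_\infty \ge \tfrac12$.

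\medskip
\noindent I expect the genuine difficulties to be concentrated in two places: the clean--up step excluding $w_n(0) \to +\infty$ (and, relatedly, the uniform Caccioppoli bounds and the $L^1_{loc}$ decay of $\theta_n w_n^2\wt w_n^2$ needed to pass to the limit in the strongly segregated regime and to identify $w_\infty-\wt w_\infty$ as harmonic), and the bounded--$\theta_n$ case, where one must exclude nonconstant entire solutions of the limiting coupled system having Hölder growth with $\a<1$. All the rest is an instance of a single principle: the Alt--Caffarelli--Friedman monotonicity formula is incompatible with a global $\mathcal{C}^{0,\a}$ bound whenever $\a<1$, precisely because it makes the associated functional decay like $R^{4\a-4}$.
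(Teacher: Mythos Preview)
The paper does not give its own proof of this statement: it is quoted in the appendix as the local version of the uniform H\"older estimates of \cite{NoTaTeVe}, with a reference to \cite{Wa} for the proof. Your blow--up strategy, with an Alt--Caffarelli--Friedman monotonicity formula providing the Liouville rigidity in the limit, is exactly the scheme of those references, and your final paragraph is a faithful summary of where the substance lies.

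The part that does not work as written is the ``clean--up'' step ruling out $w_n(0)\to+\infty$. Two things fail. First, from $w_n\to+\infty$ locally uniformly you cannot conclude $\theta_n w_n^2\to+\infty$: unwinding the scaling gives $\theta_n w_n(0)^2=\beta_n r_n^{2}\,u_{\beta_n}(x_n)^2$, and nothing in your set--up prevents $u_{\beta_n}(x_n)\to 0$ fast enough to kill this, so Lemma~\ref{lemma exponential decay} may simply not apply. Second, even granting $\wt w_n\to 0$ on $B_{1/2}$ and approximate harmonicity of $w_n$ there, this is not in contradiction with $\mathrm{osc}_{B_1}w_n\ge\tfrac12$ and $[w_n]_{\mathcal{C}^{0,\a}(B_{R_n})}\le 1$: the Liouville step needs harmonicity (or the coupled system) on the \emph{whole} space, not on a fixed ball, and a harmonic function on $B_{1/2}$ with those two properties is perfectly possible.

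In \cite{NoTaTeVe} (and in the local adaptation \cite{Wa}) this is handled not by proving $w_n(0)$ bounded after the fact, but by building it into the choice of blow--up data: one selects the points and scales via a maximisation of the \emph{local} H\"older quotient (a point--picking argument) so that the rescaled pair automatically has controlled $L^\infty$ norm on every compact set, and one carries the Almgren frequency along the blow--up to keep the energy under control. Once that normalisation is in place, your case analysis on $\theta_n$ and the ACF argument in the segregated limit are correct and match the published proofs.
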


As a consequence, one can easily adapt the proof of Theorem 1.2 of \cite{NoTaTeVe} and obtain a local segregation theorem, see also \cite{DaWaZh, TaTe}.

\begin{theorem}\label{segregation thm}
Let $\{(u_\beta,v_\beta)\}$ be a family of solutions to \eqref{system beta} in a ball $B_{2r}(x_0) \subset \R^N$ (where $x_0 \in \R^N$ and $r>0$). Assume that, as $\beta \to +\infty$, $\{ (u_\beta,v_\beta)\}$ is uniformly bounded in $L^\infty(B_{2r}(x_0))$. Then there exists a pair $(u_\infty,v_\infty)$ such that, up to a subsequence, there holds
\begin{itemize}
\item[($i$)] $u_\beta \to u_\infty$ and $v_\beta \to v_\infty$ in $\mathcal{C}^0(B_r(x_0)) \cap H^1(B_r(x_0))$,
\item[($ii$)] $u_\infty v_\infty \equiv 0$ in $B_r(x_0)$ and 
\[
\lim_{\beta \to +\infty} \int_{B_r(x_0)} \beta u^2_\beta v_\beta^2 = 0 ,
\]
\item[($iii$)] the limiting profile satisfies 
\[
\begin{cases} 
-\Delta u_\infty=0 & \text{in $\{u_\infty>0\} \cap B_r(x_0)$} \\
-\Delta v_\infty=0 & \text{in $\{v_\infty>0\} \cap B_r(x_0)$},
\end{cases}
\]
\item[($iv$)] $u_\infty-v_\infty$ is harmonic and both $u_\infty$ and $v_\infty$ are subharmonic in $B_r(x_0)$.
\end{itemize}
\end{theorem}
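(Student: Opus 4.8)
The plan is to localise the proof of Theorem~1.2 in \cite{NoTaTeVe}, taking the uniform H\"older bound of Theorem \ref{local Holder estimate} as the starting point. \emph{Step 1 (compactness).} From the $L^\infty$ bound on $B_{2r}(x_0)$ and Theorem \ref{local Holder estimate}, the families $\{u_\beta\},\{v_\beta\}$ are bounded in $\mathcal{C}^{0,\alpha}$ on every compact subset of $B_{2r}(x_0)$, for each $\alpha\in(0,1)$; by Ascoli--Arzel\`a, along a subsequence $u_\beta\to u_\infty$, $v_\beta\to v_\infty$ uniformly on compact subsets of $B_{2r}(x_0)$, with $u_\infty,v_\infty\ge0$ H\"older continuous. \emph{Step 2 (energy estimates).} Testing $-\Delta u_\beta=-\beta u_\beta v_\beta^2$ with $u_\beta\varphi^2$ for a cut-off $\varphi$ and absorbing the gradient term gives
\[
\int|\nabla u_\beta|^2\varphi^2+2\beta\int u_\beta^2v_\beta^2\varphi^2\le 4\int u_\beta^2|\nabla\varphi|^2\le C,
\]
and symmetrically for $v_\beta$. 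Hence $\{u_\beta\},\{v_\beta\}$ are bounded in $H^1_{loc}(B_{2r}(x_0))$ and $\beta\int_K u_\beta^2v_\beta^2\le C_K$ for every $K\subset\subset B_{2r}(x_0)$; in particular $\int_K u_\beta^2v_\beta^2\le C_K/\beta\to0$, so the uniform convergence forces $u_\infty v_\infty\equiv0$. Testing the equation for $u_\beta-u_\infty$ with $(u_\beta-u_\infty)\varphi^2$ and using $\|u_\beta-u_\infty\|_{L^\infty(K)}\to0$ together with the boundedness of $\beta\int_K u_\beta v_\beta^2$ (test with $\varphi^2$) upgrades the convergence to strong $H^1_{loc}$, which gives item (i). Since $-\Delta u_\beta\le0$, each $u_\beta$ is subharmonic, and a locally uniform limit of subharmonic functions is subharmonic, so $u_\infty$ and likewise $v_\infty$ are subharmonic; this is the second half of item (iv).

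\emph{Step 3 (harmonicity on the positivity sets and refined vanishing).} On a compact $K\subset\subset\{u_\infty>0\}$ one has $u_\beta\ge c>0$ on a neighbourhood of $K$ for $\beta$ large, so $-\Delta v_\beta\le -c^2\beta v_\beta$ with $v_\beta$ bounded; Lemma \ref{lemma exponential decay} then gives $\beta v_\beta^2\to0$ uniformly on $K$, whence $\beta u_\beta v_\beta^2\to0$ uniformly on $K$. Interior elliptic estimates (see \cite{GiTr}) give $u_\beta\to u_\infty$ in $\mathcal{C}^2_{loc}(\{u_\infty>0\})$ with $\Delta u_\infty=0$ there, and symmetrically for $v_\infty$ on $\{v_\infty>0\}$: this is item (iii). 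The same exponential decay, applied on $\{u_\infty>\varepsilon\}$ with the complementary region estimated by $C\varepsilon$ via the $L^1$ bound on $\beta u_\beta v_\beta^2$, lets one pass to the limit in the weak formulation of the $u_\beta$-equation tested against $u_\infty\varphi^2$; combined with the energy equality of Step~2 and the strong $H^1_{loc}$ convergence, this yields $\beta\int_{B_r(x_0)}u_\beta^2v_\beta^2\to0$, completing item (ii).

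\emph{Step 4 (harmonicity of $u_\infty-v_\infty$).} This is the only genuinely delicate point: one must show $\Delta(u_\infty-v_\infty)=0$ \emph{across} the free boundary $\Gamma:=\{u_\infty=v_\infty=0\}$. That this does not follow from the previous steps alone is already clear in dimension one, where $(|x|-1)^+$ and $0$ are disjointly supported, nonnegative, subharmonic, and harmonic where positive, yet their difference is not harmonic; so the structure of the $\beta$-problem must be used. Testing each equation against $\varphi^2$ shows $\beta\int_K(u_\beta v_\beta^2+u_\beta^2v_\beta)\le C_K$, so along a subsequence $\beta u_\beta v_\beta^2\rightharpoonup\mu$ and $\beta u_\beta^2 v_\beta\rightharpoonup\nu$ as nonnegative Radon measures, with $\Delta u_\infty=\mu$ and $\Delta v_\infty=\nu$; by Step~3 both $\mu$ and $\nu$ are supported on $\Gamma$. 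To conclude $\mu=\nu$, hence $\Delta(u_\infty-v_\infty)=\mu-\nu=0$, I would reproduce the argument of \cite{NoTaTeVe}: a blow-up analysis at points of $\Gamma$, for which the uniform H\"older bound of Theorem \ref{local Holder estimate} provides the compactness, together with a Liouville-type classification of the blow-up limits showing that the interaction terms carry no mass in the limit. This is where the substance of \cite{NoTaTeVe} is concentrated; everything else above is a routine localisation.
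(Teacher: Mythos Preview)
Your proposal is correct and follows precisely the approach the paper indicates: the paper itself gives no proof of this theorem beyond the single sentence ``one can easily adapt the proof of Theorem~1.2 of \cite{NoTaTeVe}\ldots\ see also \cite{DaWaZh, TaTe}'', and you have carried out exactly that adaptation, with considerably more detail than the paper supplies. Your identification of item~(iv) --- the harmonicity of $u_\infty-v_\infty$ across the free boundary --- as the only genuinely non-routine point, and your decision to defer it to the blow-up/Liouville machinery of \cite{NoTaTeVe}, matches the paper's intent.

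Two minor remarks on the write-up. In Step~2 the constant in front of $\beta\int u_\beta^2 v_\beta^2\varphi^2$ should be $1$ rather than $2$ after the Cauchy--Schwarz absorption (test the single equation for $u_\beta$ against $u_\beta\varphi^2$); this is cosmetic. More substantively, the strong $H^1_{loc}$ convergence and the vanishing $\beta\int u_\beta^2 v_\beta^2\to 0$ are best derived together: testing the $u_\beta$-equation against $(u_\beta-u_\infty)\varphi^2$ gives $\int|\nabla u_\beta|^2\varphi^2\to\int|\nabla u_\infty|^2\varphi^2$ as you indicate, and then the energy identity $\int|\nabla u_\beta|^2\varphi^2+2\int u_\beta\varphi\nabla u_\beta\cdot\nabla\varphi+\beta\int u_\beta^2 v_\beta^2\varphi^2=0$ forces the interaction term to converge to $-\int\nabla u_\infty\cdot\nabla(u_\infty\varphi^2)=\int u_\infty\varphi^2\,d(\Delta u_\infty)=0$, since $\Delta u_\infty$ is a nonnegative measure supported on $\{u_\infty=0\}$. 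This streamlines your Step~3 and avoids the slightly informal splitting over $\{u_\infty>\varepsilon\}$.
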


\begin{remark}\label{rem su CPAM e dim}
In \cite{NoTaTeVe} it is considered a different system with some additional terms. In particular, the term $u^3$ appear in the equation for $u$, and $v^3$ in the equation for $v$. Since it is required that these powers are subcritical for the Sobolev embedding, this imposes a restriction on the dimension $N$. However, as explained in the introduction of the quoted paper, all the results are valid in any dimension provided $u^3$ and $v^3$ are replaced by subcritical terms; this is clearly the case of system \eqref{system beta}.
\end{remark}

\subsection*{The Almgren monotonicity formula}

We recall some properties of the functions $H$ and $N$, defined in \eqref{def di H,N}.
Firstly
\begin{remark}
A direct computation shows that
\[
\frac{\pa}{\pa r}H(x_0,r)= 2 r^{1-N} \int_{B_r(x_0)} |\nabla u|^2 +|\nabla v|^2 + 2u^2 v^2 \ge 0:
\]
for every $x_0 \in \R^N$ and $r>0$ the function $H(x_0,r)$ is nondecreasing in $r$.
\end{remark}
Proposition 5.2 of \cite{BeTeWaWe} says that also the Almgren quotient is nondecreasing as function of $r$. 

\begin{proposition}[Almgren monotonicity formula]\label{Almgren formula}  
Let $(u,v)$ be a solution of \eqref{system}, let $x_0 \in \R^N$. The Almgren frequency function $N(x_0,r)$ is well defined for $r\in (0,+\infty)$, nonnegative and nondecreasing in $r$.
\end{proposition}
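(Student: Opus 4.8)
This is Proposition 5.2 of \cite{BeTeWaWe}, and I only sketch the argument. By translation invariance I take $x_0 = 0$ and abbreviate $H(r) = H(0,r)$, $E(r) = E(0,r)$, $N(r) = N(0,r)$. Since $u, v > 0$ in $\R^N$, we have $u^2 + v^2 > 0$ on every sphere $\pa B_r(0)$, hence $H(r) > 0$ for every $r > 0$ and $N(r)$ is well defined; moreover $E(r) > 0$ because $|\nabla u|^2 + |\nabla v|^2 + u^2 v^2$ is positive throughout $B_r(0)$, so in particular $N(r) \ge 0$. As the right-hand sides of \eqref{system} are polynomial in $u$ and $v$, elliptic regularity makes $u, v$ smooth, so all the integrations by parts performed below on the bounded balls $B_r(0)$ are legitimate. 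The plan is the classical Almgren computation: one shows that $\frac{d}{dr}\log N(r) = E'(r)/E(r) - H'(r)/H(r) \ge 0$.

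For the denominator I would record, as in the Remark preceding the statement, that differentiating $H(r) = \int_{\pa B_1(0)} \big(u^2(r\theta) + v^2(r\theta)\big)\, d\sigma(\theta)$ and then applying the divergence theorem together with $u\Delta u = v\Delta v = u^2 v^2$ (a consequence of \eqref{system}) gives
\[
H'(r) = \frac{2}{r^{N-1}} \int_{B_r(0)} |\nabla u|^2 + |\nabla v|^2 + 2 u^2 v^2 = \frac{2}{r^{N-1}} \int_{\pa B_r(0)} u\, \pa_\nu u + v\, \pa_\nu v \ge 0,
\]
whence $H'(r)/H(r) = 2\,\mathcal A(r)/\mathcal C(r)$, where $\mathcal A(r) := \int_{\pa B_r(0)} u\,\pa_\nu u + v\,\pa_\nu v$ (which equals $\int_{B_r(0)} |\nabla u|^2 + |\nabla v|^2 + 2u^2 v^2 \ge 0$) and $\mathcal C(r) := \int_{\pa B_r(0)} u^2 + v^2 > 0$.

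For the numerator I would use a Rellich--Pohozaev identity for \eqref{system}, obtained by multiplying the two equations by $x\cdot\nabla u$ and $x\cdot\nabla v$, adding, and integrating on $B_r(0)$: here the decisive point is that the coupling contributions collapse to a single exact divergence, since $u v^2 (x\cdot\nabla u) + u^2 v (x\cdot\nabla v) = \tfrac12\, x\cdot\nabla(u^2 v^2)$. After the standard manipulations this produces
\[
E'(r) = \frac{2}{r^{N-2}} \int_{\pa B_r(0)} (\pa_\nu u)^2 + (\pa_\nu v)^2 \;+\; \frac{2}{r^{N-1}} \int_{B_r(0)} u^2 v^2,
\]
with both summands nonnegative, together with the identity $E(r) = r^{2-N}\big(\mathcal A(r) - \mathcal P(r)\big)$ (one more integration by parts), where $\mathcal P(r) := \int_{B_r(0)} u^2 v^2 \ge 0$. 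Writing also $\mathcal B(r) := \int_{\pa B_r(0)} (\pa_\nu u)^2 + (\pa_\nu v)^2$, the inequality $E'(r)/E(r) \ge H'(r)/H(r)$ is equivalent to $\mathcal C(r)\big(r\,\mathcal B(r) + \mathcal P(r)\big) \ge r\,\mathcal A(r)\big(\mathcal A(r) - \mathcal P(r)\big)$, which follows immediately from the Cauchy--Schwarz inequality $\mathcal A(r)^2 \le \mathcal C(r)\,\mathcal B(r)$ on $\pa B_r(0)$ together with $\mathcal A(r), \mathcal C(r), \mathcal P(r) \ge 0$.

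The only step demanding genuine care — and the reason this is not a verbatim copy of the scalar Almgren formula — is the treatment of the coupling term $u^2 v^2$: one must check that the extra interior term it contributes to the Pohozaev identity enters, after comparison with $H'(r)$, with a favourable sign. This is guaranteed by the nonnegativity of the ``potential'' $\tfrac12 u^2 v^2$ and by its $4$-homogeneity, i.e. by $\nabla\big(\tfrac12 u^2 v^2\big)\cdot(u,v) = 2 u^2 v^2 \ge 0$; in fact, once this is settled, the competitive sign of the nonlinearity makes every quantity entering the final estimate nonnegative, so the argument is, if anything, slightly cleaner than in the single-equation case. Having established $\frac{d}{dr}\log N \ge 0$ on $(0,+\infty)$, the monotonicity of $N(0,r)$ follows.
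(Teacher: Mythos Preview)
Your proposal is correct. The paper does not actually prove this proposition: it merely cites Proposition~5.2 of \cite{BeTeWaWe} and states the result, so your sketch goes beyond what the paper offers while remaining fully consistent with it. The computations you outline (the formula for $H'$, the Rellich--Pohozaev identity yielding $E'(r)=\tfrac{2}{r^{N-2}}\int_{\pa B_r}(\pa_\nu u)^2+(\pa_\nu v)^2+\tfrac{2}{r^{N-1}}\int_{B_r}u^2v^2$, the identity $E(r)=r^{2-N}(\mathcal A(r)-\mathcal P(r))$, and the final Cauchy--Schwarz step) are exactly the standard Almgren argument adapted to system \eqref{system}, and each step checks out.
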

A control on the Almgren frequency function gives useful information about the growth of the function $H$ with respect to the radial variable. The proof of the following result is a straightforward modification of the proof of Proposition 5.3 in \cite{BeTeWaWe}
\begin{corollary}\label{doubling}
Let $(u,v)$ be a solution of \eqref{system}, let $x_0 \in \R^N$, and assume that $d_1 \le N(x_0,r) \le d_2$ for $0<R_1<r<R_2$. Then
\[
\frac{r_2^{2d_1}}{r_1^{2d_1}} \le \frac{H(x_0,r_2)}{H(x_0,r_1)} \le e^{d_2} \frac{r_2^{2d_2}}{r_1^{2d_2}}
\]
for every $R_1<r_1<r_2<R_2$. 
\end{corollary}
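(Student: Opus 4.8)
The plan is to follow the classical route for doubling estimates of Almgren type, adapting the proof of Proposition 5.3 in \cite{BeTeWaWe}: everything reduces to an identity for the logarithmic derivative of $r \mapsto H(x_0,r)$ together with the monotonicity of $H(x_0,\cdot)$ and of the frequency $N(x_0,\cdot)$ (Proposition \ref{Almgren formula}).

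First I would record the differential identity for $H$. Multiplying the two equations in \eqref{system} by $u$ and $v$ respectively, integrating over $B_r(x_0)$ and using the divergence theorem, one obtains $\int_{\pa B_r(x_0)}(u\,\pa_\nu u + v\,\pa_\nu v) = \int_{B_r(x_0)}(|\nabla u|^2 + |\nabla v|^2 + 2u^2 v^2)$; combined with the elementary formula $\frac{d}{dr}\int_{\pa B_r(x_0)}(u^2+v^2) = \frac{N-1}{r}\int_{\pa B_r(x_0)}(u^2+v^2) + 2\int_{\pa B_r(x_0)}(u\,\pa_\nu u + v\,\pa_\nu v)$ and the definition $H(x_0,r) = r^{1-N}\int_{\pa B_r(x_0)}(u^2+v^2)$ (see also the Remark preceding Proposition \ref{Almgren formula}), this gives
\[
\frac{d}{dr}\log H(x_0,r) = \frac{2}{r}\cdot\frac{r^{2-N}\int_{B_r(x_0)}(|\nabla u|^2 + |\nabla v|^2 + 2u^2 v^2)}{H(x_0,r)} = \frac{2N(x_0,r)}{r} + \frac{2\,r^{1-N}\int_{B_r(x_0)}u^2 v^2}{H(x_0,r)},
\]
where in the last step I used $N(x_0,r)=E(x_0,r)/H(x_0,r)$; note the extra summand is nonnegative.

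The lower bound is then immediate: discarding the nonnegative remainder and using $N(x_0,r)\ge d_1$ on $(R_1,R_2)$ one gets $\frac{d}{dr}\log H(x_0,r)\ge \frac{2d_1}{r}$, and integrating on $[r_1,r_2]$ yields $H(x_0,r_2)/H(x_0,r_1)\ge (r_2/r_1)^{2d_1}$. For the upper bound the main term is handled the same way with $N(x_0,r)\le d_2$, producing the factor $(r_2/r_1)^{2d_2}$; the remainder has to be controlled separately, and this is where I expect the actual work to lie and where the harmless multiplicative constant $e^{d_2}$ enters. The idea is that the remainder is of lower order than the frequency: from the identity above one has $\int_{B_r(x_0)}u^2v^2 \le \tfrac14\,r^{N-1}\,\pa_r H(x_0,r)$, so part of the remainder can be absorbed back into $\frac{d}{dr}\log H(x_0,r)$, and exploiting the monotonicity of $H(x_0,\cdot)$ and $N(x_0,\cdot)$ together with the a priori bound $N(x_0,\cdot)\le d_2$ one bounds the integral of the remainder over $[r_1,r_2]\subset[R_1,R_2]$ by a constant, which in the normalization of \cite{BeTeWaWe} is $d_2$; exponentiation then gives $H(x_0,r_2)/H(x_0,r_1)\le e^{d_2}(r_2/r_1)^{2d_2}$. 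The only genuinely delicate point is this remainder estimate — one must close the resulting inequality without degrading the exponent $2d_2$ in $(r_2/r_1)$ — while the rest of the argument is routine.
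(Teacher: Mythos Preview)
Your overall strategy is exactly the one implicit in the paper (which simply cites Proposition~5.3 of \cite{BeTeWaWe}): write $\frac{d}{dr}\log H(x_0,r)=\frac{2N(x_0,r)}{r}+R(r)$ with $R(r)=\frac{2r^{1-N}\int_{B_r(x_0)}u^2v^2}{H(x_0,r)}\ge 0$, integrate, and control $\int_{r_1}^{r_2}R$. The identity and the lower bound are correct as you wrote them.

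The one place your proposal is shaky is the upper bound on the remainder. The inequality $\int_{B_r}u^2v^2\le \tfrac14 r^{N-1}H'$ you record is true, but if you ``absorb it back'' it only yields $R\le \tfrac12\,H'/H$, hence $H'/H\le \frac{4N}{r}\le \frac{4d_2}{r}$, which integrates to $(r_2/r_1)^{4d_2}$ --- the exponent is doubled and the factor $e^{d_2}$ never appears. The clean estimate, and the one underlying \cite[Prop.~5.3]{BeTeWaWe}, goes through the derivative of the frequency: the same Pohozaev computation that proves Proposition~\ref{Almgren formula} gives
\[
E'(r)=\frac{2}{r^{N-2}}\int_{\pa B_r}\big((\pa_\nu u)^2+(\pa_\nu v)^2\big)+\frac{2}{r^{N-1}}\int_{B_r}u^2v^2,
\]
and combining this with $H'(r)=\frac{2}{r^{N-1}}\int_{\pa B_r}(u\,\pa_\nu u+v\,\pa_\nu v)$ and Cauchy--Schwarz one finds $N'(r)\ge R(r)$. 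Hence $\int_{r_1}^{r_2}R(r)\,dr\le N(x_0,r_2)-N(x_0,r_1)\le d_2$, and exponentiation gives exactly $e^{d_2}(r_2/r_1)^{2d_2}$. So the ``monotonicity of $N$'' you invoke is indeed the key, but it enters through the pointwise inequality $R\le N'$ rather than through any absorption into $\log H$.
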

In light of the subharmonicity of $(u,v)$, it is not difficult to deduce a pointwise estimate on the growth of the solution $(u,v)$.
\begin{corollary}\label{from N to growth}
Let $(u,v)$ be a solution of \eqref{system}, let $x_0 \in \R^N$ and $p \ge 1$, and assume that $N(x_0,r) \le p$ for every $r>0$. Then there exists $C>0$ such that
\[
u(x)+v(x) \le C(1+|x|^p) \qquad \forall x \in \R^N.
\]
\end{corollary}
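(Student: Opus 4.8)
The plan is to turn the frequency bound into a polynomial growth bound for the spherical $L^2$-averages of $(u,v)$ via the doubling estimate of Corollary \ref{doubling}, and then to upgrade this to a pointwise bound using the subharmonicity of $u$ and $v$.

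First I would combine the hypothesis with Proposition \ref{Almgren formula}: since $N(x_0,\cdot)$ is nonnegative and $N(x_0,r)\le p$ for all $r>0$, Corollary \ref{doubling} applies with $d_1=0$ and $d_2=p$ on any interval $(R_1,R_2)$ with $0<R_1<1<R_2$. Taking $r_1=1$, $r_2=r>1$ and letting $R_1\to 0^+$, $R_2\to+\infty$ gives $H(x_0,r)\le e^{p}H(x_0,1)\,r^{2p}$ for every $r\ge 1$; recalling the definition of $H$ in \eqref{def di H,N}, this reads
\[
\int_{\partial B_r(x_0)} u^2+v^2 \le e^{p}H(x_0,1)\, r^{N-1+2p}\qquad\forall r\ge 1 .
\]
For $0<r\le 1$ the monotonicity of $H(x_0,\cdot)$ instead gives $\int_{\partial B_r(x_0)}u^2+v^2\le H(x_0,1)\,r^{N-1}\le H(x_0,1)$. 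Integrating these two bounds in the radial variable yields a constant $C_1>0$ with $\int_{B_R(x_0)} u^2+v^2 \le C_1 R^{N+2p}$ for all $R\ge 1$.

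Next I would exploit subharmonicity. From the equations, $\Delta u = uv^2\ge 0$ and $\Delta v = u^2 v\ge 0$, so $u$ and $v$ are nonnegative and subharmonic, hence $u^2$ and $v^2$ are subharmonic as well (being convex nondecreasing functions of nonnegative subharmonic functions). The sub-mean-value inequality then gives, for any $y$ with $|y-x_0|=R\ge 1$ and the choice of radius $R$ (so that $B_R(y)\subset B_{2R}(x_0)$),
\[
u(y)^2 \le \frac{1}{|B_R(y)|}\int_{B_R(y)} u^2 \le \frac{C}{R^N}\int_{B_{2R}(x_0)} u^2+v^2 \le C\,C_1\, 2^{N+2p}\, R^{2p},
\]
and likewise for $v$. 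Therefore $u(y)+v(y)\le C_2\,|y-x_0|^{p}$ whenever $|y-x_0|\ge 1$. On the ball $B_1(x_0)$ the solution is smooth, hence bounded, say by $C_3$. Combining the two regimes and using $|y-x_0|^p\le 2^{p-1}(|y|^p+|x_0|^p)$ yields the claimed estimate $u(x)+v(x)\le C(1+|x|^p)$, with $C$ depending on $x_0$, $p$ and $N$.

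The argument is almost entirely routine; the only point requiring a little care is the first step, namely that the doubling inequality of Corollary \ref{doubling} — stated on a bounded interval $(R_1,R_2)$ — may be used with $R_1\to 0$ and $R_2\to\infty$ precisely because the frequency bound holds for \emph{all} $r>0$, together with checking that the small-radius contribution to $\int_{B_R(x_0)}u^2+v^2$ is a harmless additive constant. I do not expect any genuine obstacle beyond this bookkeeping.
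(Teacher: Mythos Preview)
Your argument is correct. Both your proof and the paper's rest on the same two ingredients: the doubling estimate from Corollary~\ref{doubling} to obtain $H(x_0,r)\le C r^{2p}$, and the subharmonicity of $u,v$ to pass to a pointwise bound. The difference lies only in how the $L^2$--to--$L^\infty$ step is carried out. The paper argues by contradiction: assuming $u(x_0+r_n x)/r_n^p\to\infty$ for some $x\in\S^{N-1}$, it compares $u$ with its harmonic replacement in $B_{2r_n}(x_0)$ via the Poisson representation formula, and then applies Cauchy--Schwarz on the boundary sphere together with the $L^2$ bound $\int_{\partial B_{2r_n}(x_0)}u^2\le C r_n^{2p+N-1}$ to obtain $u\le C r_n^p$ on $\overline{B}_{r_n}(x_0)$, a contradiction. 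Your route is more direct: you observe that $u^2$ is itself subharmonic (since $\Delta(u^2)=2u\Delta u+2|\nabla u|^2\ge 0$), integrate the spherical $L^2$ bound radially to control $\int_{B_{2R}(x_0)}u^2$, and apply the solid mean-value inequality on $B_R(y)\subset B_{2R}(x_0)$. Your argument avoids the Poisson kernel and the contradiction setup, at the modest cost of one radial integration; the paper's version stays on spheres throughout. Either way the content is routine.
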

\begin{proof}
The thesis follows if we show that there exists $C>0$ such that
\[
u(x)+v(x) \le C(1+|x-x_0|^p) \qquad \forall x \in \R^N.
\]
Suppose by contradiction that our claim is not true. Then there exists $r_n \to +\infty$ such that
\beq\label{growth more then d}
\lim_{n \to +\infty} \frac{u(x_0+ r_n x)}{r_n^p}=+\infty
\eeq
for some $x \in \mathbb{S}^{N-1}$ and $r_n \to +\infty$. In light of Corollary \ref{doubling}, we have
\begin{equation}\label{eq40}
\frac{H(x_0,2r_n)}{(2r_n)^{2p}} \le e^p H(x_0,1) \quad \Rightarrow  \quad \int_{\pa B_{2r}(x_0)} u^2+v^2 \le C r_n^{2p+N-1}.
\end{equation}
As $u$ is subharmonic, $u \le \f_n$ in $B_{2 r_n}(x_0)$, where $\f_n$ is the solution of
\[
\begin{cases}
-\D \f_n=0 & \text{in $B_{2r_n}(x_0)$}\\
\f_n=u & \text{on $\pa B_{2r_n}(x_0)$}.
\end{cases}
\]
By the representation formula for harmonic functions we know that for every $x \in \overline{B}_{r_n}(x_0)$
\begin{align*}
\f_n(x) & = \frac{4 r_n^2 - |x-x_0|^2}{2N|\mathbb{S}^{N-1}| r_n} \int_{\pa B_{2 r_n}(x_0) } \frac{u(x)}{|x-y|^N} \,d\sigma_y \\
& \le C r_n \left( \int_{\pa B_{2 r_n}(x_0) } \frac{d\sigma_y} {r_n^{2N}}\right)^{\frac{1}{2}}\left( \int_{\pa B_{2 r_n}(x_0) } u^2\right)^{\frac{1}{2}} \le C r_n^{-\frac{N-1}{2}+p + \frac{N-1}{2}}= C r_n^p,
\end{align*} 
where $C$ depends only on the dimension $N$, and for the last inequality we used the \eqref{eq40}. Thus, for every $x \in \S^{N-1}$ we obtain
\[
u(x_0+r_n x) \le \f_n(x) \le C r_n^p \qquad \forall n,
\]
in contradiction with equation \eqref{growth more then d}.
\end{proof}
As proved in \cite{Fa2}, the converse holds true.
\begin{lemma}[Lemma 2.1 in \cite{Fa2}]\label{from growth to N}
Let $(u,v)$ be a solution of \eqref{system}, let $x_0 \in \R^N$, and assume that there exist $p \ge 1$ and $C>0$ such that
\[
u(x)+v(x) \le C(1+|x|^p) \qquad \forall x \in \R^N.
\]
Then $N(x_0,r) \le p$ for every $x_0 \in \R^N$ and for every $r>0$.
\end{lemma}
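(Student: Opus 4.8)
The plan is to combine the monotonicity of the Almgren frequency function with the doubling-type estimate of Corollary~\ref{doubling}. Fix $x_0 \in \R^N$. First I would invoke Proposition~\ref{Almgren formula}: the map $r \mapsto N(x_0,r)$ is nonnegative and nondecreasing on $(0,+\infty)$, so the limit
\[
d_{x_0} := \lim_{r \to +\infty} N(x_0,r)
\]
exists in $[0,+\infty]$, and by monotonicity $N(x_0,r) \le d_{x_0}$ for every $r > 0$. Hence it suffices to prove the single inequality $d_{x_0} \le p$.

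To this end I would argue by contradiction: suppose $d_{x_0} > p$ and pick $d'$ with $p < d' < d_{x_0}$. Monotonicity yields $R_0 > 0$ with $N(x_0,r) \ge d'$ for all $r \ge R_0$. Since $N(x_0,\cdot)$ is nondecreasing and finite at every finite radius, it is bounded on each interval $[R_0,r]$, so Corollary~\ref{doubling} applies on $(R_0, r+1)$ and produces a constant $c > 0$, \emph{independent of $r$}, with
\[
H(x_0,r) \ge c\, r^{2d'}, \qquad \text{equivalently} \qquad \int_{\partial B_r(x_0)} u^2+v^2 = r^{N-1}H(x_0,r) \ge c\, r^{N-1+2d'}
\]
for every $r \ge R_0$. (Alternatively, one can avoid the upper bound in Corollary~\ref{doubling} altogether and just integrate the differential inequality $\frac{d}{dr}\log H(x_0,r) \ge \frac{2N(x_0,r)}{r}$, which follows immediately from the formulas for $\partial_r H$ and for $N = E/H$.)

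On the other hand, the algebraic growth hypothesis gives $u^2(x)+v^2(x) \le C(1+|x|^{2p})$ for all $x$, hence for $r$ large
\[
\int_{\partial B_r(x_0)} u^2+v^2 \le C\, r^{N-1}\bigl(1 + (r+|x_0|)^{2p}\bigr) \le C'\, r^{N-1+2p}.
\]
Comparing this with the lower bound above yields $c\, r^{2d'} \le C'\, r^{2p}$ for all large $r$, which is impossible since $d' > p$. This contradiction forces $d_{x_0} \le p$, and therefore $N(x_0,r) \le p$ for every $r > 0$; as $x_0 \in \R^N$ was arbitrary, the lemma follows.

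I do not expect a serious obstacle here: this is the standard ``frequency controls growth / growth controls frequency'' dichotomy. The only points needing a little care are checking that Corollary~\ref{doubling} is legitimately applicable on $(R_0,r+1)$ — which requires $N(x_0,\cdot)$ to be bounded above there, automatic from monotonicity plus finiteness at a finite endpoint — and keeping track that the constant $c$ in the lower bound for $H$ (namely $c = H(x_0,R_0)/R_0^{2d'} > 0$, positive because $u,v>0$) does not depend on $r$.
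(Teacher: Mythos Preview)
Your argument is correct and is precisely the standard route: monotonicity of $N(x_0,\cdot)$ plus the doubling lower bound $H(x_0,r)\ge c\,r^{2d'}$ versus the upper bound $H(x_0,r)\le C'(1+r)^{2p}$ coming from the growth hypothesis. The paper does not actually prove this lemma---it simply quotes it as Lemma~2.1 of \cite{Fa2}---so there is no in-paper proof to compare against, but your reasoning matches what one would expect the cited argument to be.
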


\begin{remark}
Combining Corollary \ref{from N to growth} and Lemma \ref{from growth to N}, we deduce that if for a single $x_0 \in \R^N$ we know that $N(x_0,r) \le p$ for every $r >0$, then 
\[
u(x)+v(x) \le C(1+|x|^p) \qquad \forall x \in \R^N,
\]
so that $N(x,r) \le p$ for every $x \in \R^N$. That is, a bound of the Almgren quotient centered in a point $x_0 \in \R^N$ provides the same bound for the quotients $N(x,\cdot)$ for every $x \in \R^N$.
\end{remark}

\begin{remark}\label{rem su scaling per N}
We point out that all these results hold true for a solution $(u_\beta,v_\beta)$ of \eqref{system beta}, with $E(x_0,r)$ replaced by the corresponding energy function, that is,
\[
\frac{1}{r^{N-2}} \int_{B_r(x_0)} |\nabla u_\beta|^2 + |\nabla v_\beta|^2 + \beta u_\beta^2 v_\beta^2.
\] 
 \end{remark}

\subsection*{The blow-down family}

By means of the previous monotonicity formulae, in \cite{BeTeWaWe} it is proved that the asymptotic information about $\{(u_\beta,v_\beta)\}$ can be improved for particular sequences. Let $(u,v)$ be a solution of \eqref{system}. For every $x_0 \in \R^N$ and $R>0$, recall that we introduced the blow-down family
\[
\left(u_{x_0,R}(x), v_{x_0,R}(x) \right) := \left( \frac{1}{\sqrt{H(x_0,R)}} u(x_0+Rx), \frac{1}{\sqrt{H(x_0,R)}} v(x_0 +Rx) \right).
\]
By definition, $\int_{\pa B_1(0)} u_{x_0,R}^2 + v_{x_0,R}^2=1$ for every $x_0 \in \R^N$ and $R>0$. Also, $(u_{x_0,R},v_{x_0,R})$ solves 
\begin{equation}\label{system for blow-down}
\begin{cases}
-\Delta u_{x_0,R}= - H(x_0,R) R^2 \, u_{x_0,R} \, v_{x_0,R}^2 & \text{in $\R^N$} \\
-\Delta v_{x_0,R}=- H(x_0,R) R^2 \,  u_{x_0,R}^2 \, v_{x_0,R} & \text{in $\R^N$} \\
u_{x_0,R},v_{x_0,R}>0 & \text{in $\R^N$}.
\end{cases}
\end{equation}

\begin{remark}\label{N ed N riscalato}
A direct computation shows that if $N(x_0,r) \le p$ for every $r \ge 1$, the same estimate holds true for the Almgren quotient associated to the function $(u_{x_0,R},v_{x_0,R})$ (for every $x_0 \in \R^N$ and $R>0$):
\[
\frac{\frac{1}{r^{N-2}} \int_{B_r(0)} |\nabla u_{x_0,R}|^2 +|\nabla v_{x_0,R}|^2 + H(x_0,R) R^2 \, u_{x_0,R}^2 \, v_{x_0,R}^2  }{\frac{1}{r^{N-1}} \int_{\pa B_r(0)} u_{x_0,R}^2 + v_{x_0,R}^2} = N(x_0, R r ) \le p \qquad \forall r \ge 1.
\]
As a consequence, if we can bound $N(x_0,\cdot)$, we can apply Corollary \ref{doubling} on $(u_{x_0,R}, v_{x_0,R})$.
\end{remark}

Theorem 1.4 in \cite{BeTeWaWe} says, roughly speaking, that if the Almgren frequency function is bounded, then the limit of $N(x_0,r)$ as $r \to +\infty$ (which exists by monotonicity) is a positive integer and the limiting profile is a homogeneous harmonic polynomial. It is straightforward to check that, although therein it is considered the case $x_0=0$, the result holds true for any $x_0 \in \R^N$. 

\begin{theorem}\label{thm blow down}
Let $(u,v)$ be a solution of \eqref{system}, let $x_0 \in \R^N$, and assume that 
\[
\lim_{r \to +\infty} N(x_0,r) =: d_{x_0} < +\infty.
\] 
Then $d_{x_0}$ is a positive integer. There exist a subsequence of the blow down family $\{(u_{x_0,R},v_{x_0,R}): R >0\}$, denoted $\{(u_{x_0,R_n},v_{x_0,R_n})\}$, and a homogeneous harmonic polynomial of degree $d_{x_0}$, denoted by $\Psi_{x_0}$, such that $(u_{x_0,R_n},v_{x_0,R_n}) \to (\Psi_{x_0}^+,\Psi_{x_0}^-)$ as $R \to +\infty$ in $\mathcal{C}^0_{loc}(\R^N)$ and in $H^1_{loc}(\R^N)$. Moreover,
\[
H(x_0,R) R^2 \, u_{x_0,R_n}^2 \, v_{x_0,R_n}^2 \to 0 \qquad \text{in $L^1_{loc}(\R^N)$}.
\]
\end{theorem}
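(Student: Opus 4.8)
The plan is to reduce to $x_0=0$ and then run the scheme behind Theorem 1.4 of \cite{BeTeWaWe}. Replacing $(u,v)$ by $(u(\cdot+x_0),v(\cdot+x_0))$ — still a solution of \eqref{system} — turns $N(x_0,\cdot)$ into $N(0,\cdot)$, so from now on I take $x_0=0$ and write $(u_R,v_R):=(u_{0,R},v_{0,R})$ and $\beta_R:=H(0,R)R^2$. By \eqref{system for blow-down} the pair $(u_R,v_R)$ solves \eqref{system beta} with competition parameter $\beta_R$, and $\int_{\pa B_1(0)}u_R^2+v_R^2=1$ for every $R$.

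First I would extract compactness for the blow-down family. Since $N(0,\cdot)$ is nonnegative, nondecreasing and converges to $d_0$, we have $0\le N(0,r)\le d_0$ for all $r$; hence, by Remark \ref{N ed N riscalato}, the Almgren quotient of $(u_R,v_R)$ at radius $r$ equals $N(0,Rr)\le d_0$, and Corollary \ref{doubling} (using the normalization at radius $1$) yields $\int_{\pa B_r(0)}u_R^2+v_R^2\le e^{d_0}r^{2d_0+N-1}$ for every $r\ge 1$, uniformly in $R$. Subharmonicity of $u_R$ and $v_R$ upgrades this to a uniform-in-$R$ bound in $L^\infty_{loc}(\R^N)$. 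Moreover $\beta_R=H(0,R)R^2\ge H(0,1)R^2\to+\infty$ (note $H(0,1)>0$ since $u,v>0$). Thus I am in position to apply the local segregation Theorem \ref{segregation thm}: up to a subsequence $R_n\to+\infty$ we obtain $(u_{R_n},v_{R_n})\to(u_\infty,v_\infty)$ in $\mathcal{C}^0_{loc}(\R^N)\cap H^1_{loc}(\R^N)$ with $u_\infty v_\infty\equiv 0$, both $u_\infty,v_\infty$ nonnegative and subharmonic, $u_\infty-v_\infty$ harmonic, and $\beta_{R_n}u_{R_n}^2v_{R_n}^2\to 0$ in $L^1_{loc}(\R^N)$ (which is precisely the last convergence in the statement). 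By uniform convergence the normalization passes to the limit, so $\int_{\pa B_1(0)}u_\infty^2+v_\infty^2=1$; in particular $(u_\infty,v_\infty)\not\equiv(0,0)$ and $w_\infty:=u_\infty-v_\infty\not\equiv 0$ (if $w_\infty\equiv 0$ then $u_\infty\equiv v_\infty$, which together with $u_\infty v_\infty\equiv0$ forces $u_\infty\equiv v_\infty\equiv0$).

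The core of the argument — the step I expect to be the main obstacle — is to identify $w_\infty$ as a homogeneous harmonic polynomial of degree $d_0$. Because $u_\infty,v_\infty\ge 0$ are Lipschitz with $u_\infty v_\infty\equiv0$, at a.e.\ point at most one of them is positive and the gradient of the other vanishes, whence $|\nabla u_\infty|^2+|\nabla v_\infty|^2=|\nabla w_\infty|^2$ and $u_\infty^2+v_\infty^2=w_\infty^2$ a.e.; consequently the Almgren quotient
\[
N_\infty(0,r):=\frac{r^{2-N}\int_{B_r(0)}|\nabla u_\infty|^2+|\nabla v_\infty|^2}{r^{1-N}\int_{\pa B_r(0)}u_\infty^2+v_\infty^2}
\]
coincides with the classical Almgren frequency of the harmonic function $w_\infty$, which is well defined (since $w_\infty\not\equiv 0$ is harmonic, $\int_{\pa B_r(0)}w_\infty^2>0$ for every $r>0$) and nondecreasing. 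On the one hand, the strong $H^1_{loc}$ convergence together with $\beta_{R_n}u_{R_n}^2v_{R_n}^2\to 0$ in $L^1_{loc}$ makes the numerators of $N(0,R_n r)$ converge to $r^{2-N}\int_{B_r(0)}|\nabla u_\infty|^2+|\nabla v_\infty|^2$, and the $\mathcal{C}^0$ convergence on $\pa B_r(0)$ makes the denominators converge; hence $N_\infty(0,r)=\lim_n N(0,R_n r)$ for every $r>0$. On the other hand $N(0,R_n r)\to d_0$, since $N(0,\cdot)$ is nondecreasing with limit $d_0$. Therefore $N_\infty(0,\cdot)\equiv d_0$, and a harmonic function with constant Almgren frequency $d_0$ is a homogeneous harmonic polynomial of degree $d_0$ (the classical Almgren characterization; cf.\ the argument of Proposition 3.9 in \cite{NoTaTeVe}); in particular $d_0\in\N$. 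Setting $\Psi_0:=w_\infty$ and using $u_\infty,v_\infty\ge 0$, $u_\infty v_\infty\equiv0$, $u_\infty-v_\infty=\Psi_0$, a pointwise check gives $(u_\infty,v_\infty)=(\Psi_0^+,\Psi_0^-)$, which settles the convergence part of the statement. It remains to rule out $d_0=0$: if $d_0=0$ then $N(0,\cdot)$, being nonnegative, nondecreasing and convergent to $0$, vanishes identically, so $E(0,r)\equiv0$, forcing $\nabla u\equiv\nabla v\equiv0$ and $u^2v^2\equiv0$; thus $u,v$ would be constants with $uv\equiv0$, contradicting $u,v>0$. Hence $d_0$ is a positive integer, and the proof is complete (the general $x_0$ being, as observed, the translated statement).
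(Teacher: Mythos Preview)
Your proof is correct and follows exactly the scheme of Theorem~1.4 in \cite{BeTeWaWe}, which is all the paper does here: the statement is quoted in the appendix as a result from \cite{BeTeWaWe} (with the remark that the case of general $x_0$ follows by translation), and no independent proof is given. One small presentational point: you invoke Lipschitz regularity of $u_\infty,v_\infty$ to obtain $|\nabla u_\infty|^2+|\nabla v_\infty|^2=|\nabla w_\infty|^2$ a.e., but this is immediate once you observe (as you do a few lines later) that $u_\infty v_\infty\equiv0$, $u_\infty,v_\infty\ge0$ and $u_\infty-v_\infty=w_\infty$ force $(u_\infty,v_\infty)=(w_\infty^+,w_\infty^-)$ pointwise; since $w_\infty$ is harmonic and hence smooth, Lipschitz regularity and the gradient identity follow at once.
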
 

This achievement permits to say something more on the asymptotic of $H(x_0,\cdot)$ in case $(u,v)$ has algebraic growth. 

\begin{corollary}\label{cor blow-down}
Let $(u,v)$ be a solution of \eqref{system} with algebraic growth. For $x_0 \in \R^N$, let \( d_{x_0}= \lim_{r \to +\infty} N(x_0,r)\), which is a positive integer by the previous statement. For every $\eps>0$ it results
\[
\lim_{r \to +\infty} \frac{H(x_0,r)}{r^{2d_{x_0}(1-\eps)}}=+\infty.
\]
\end{corollary}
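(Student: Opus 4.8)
The plan is to read off the growth of $H(x_0,\cdot)$ from the monotonicity of the Almgren quotient combined with the doubling estimate of Corollary~\ref{doubling}. First I would record the preliminary facts: since $(u,v)$ has algebraic growth, Lemma~\ref{from growth to N} gives a bound $N(x_0,r)\le p$ valid for all $r>0$, so by Proposition~\ref{Almgren formula} the limit $d_{x_0}=\lim_{r\to+\infty}N(x_0,r)$ exists and is finite (and is a positive integer by Theorem~\ref{thm blow down}). Monotonicity then yields $N(x_0,r)\le d_{x_0}$ for every $r>0$, and for each $\delta>0$ there is a radius $R_\delta>0$ with $d_{x_0}-\delta\le N(x_0,r)\le d_{x_0}$ for all $r\ge R_\delta$.

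Next I would apply Corollary~\ref{doubling} on the interval $(R_\delta,+\infty)$ with $d_1=d_{x_0}-\delta$ and $d_2=d_{x_0}$. Keeping $r_1=R_\delta$ fixed and letting $r_2=r>R_\delta$ vary, the lower bound there reads
\[
H(x_0,r)\;\ge\;\frac{H(x_0,R_\delta)}{R_\delta^{\,2(d_{x_0}-\delta)}}\,r^{\,2(d_{x_0}-\delta)}\;=:\;C_\delta\, r^{\,2(d_{x_0}-\delta)},
\]
with $C_\delta>0$, since $u,v>0$ forces $H(x_0,R_\delta)>0$.

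Finally, given $\eps>0$, I would choose $\delta$ with $0<\delta<d_{x_0}\eps$ (possible because $d_{x_0}\ge 1$), for instance $\delta=\tfrac12 d_{x_0}\eps$. Then
\[
\frac{H(x_0,r)}{r^{\,2d_{x_0}(1-\eps)}}\;\ge\;C_\delta\, r^{\,2(d_{x_0}-\delta)-2d_{x_0}(1-\eps)}\;=\;C_\delta\, r^{\,2d_{x_0}\eps-2\delta}\;\longrightarrow\;+\infty\qquad\text{as }r\to+\infty,
\]
because the exponent $2d_{x_0}\eps-2\delta$ is strictly positive; this is exactly the assertion. There is no real obstacle here: the statement is a direct consequence of the doubling estimate, and the only thing requiring a bit of attention is the bookkeeping, namely choosing $\delta$ in terms of $\eps$ and $d_{x_0}$ so that the resulting power of $r$ strictly dominates $r^{2d_{x_0}(1-\eps)}$, together with the remark that $H(x_0,\cdot)$ stays bounded away from zero on $[R_\delta,+\infty)$ so that $C_\delta$ does not degenerate.
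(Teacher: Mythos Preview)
Your argument is correct and is essentially the same as the paper's: both pick a threshold radius beyond which $N(x_0,r)\ge d_{x_0}-\delta$ (the paper takes $\delta=d_{x_0}\eps/2$, exactly your choice), invoke the lower bound of Corollary~\ref{doubling} to get $H(x_0,r)\ge C r^{2(d_{x_0}-\delta)}$, and then compare exponents. Your presentation simply spells out a few preliminary points (finiteness of the limit, positivity of $H(x_0,R_\delta)$) that the paper leaves implicit.
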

\begin{proof}
As $d_{x_0} \ge 1$, using the Almgren monotonicity formula (Theorem \ref{Almgren formula}) we deduce that for every $\eps>0$ there exists $r_\eps>0$ such that if $r>r_\eps$ then
\[
N(x_0,r) \ge d_{x_0}\left(1-\frac{\eps}{2}\right).
\]
Hence, we can use Corollary \ref{doubling} to obtain
\[
H(x_0,r)\ge C r^{2d_{x_0} \left(1-\frac{\eps}{2}\right)} \qquad \forall r > r_\eps,
\]  
with $C>0$. Therefore
\[
\lim_{r \to +\infty} \frac{H(x_0,r)}{r^{2d_{x_0}(1-\eps)}} \ge \lim_{r \to +\infty} C\frac{r^{2d_{x_0} \left(1-\frac{\eps}{2}\right)}}{r^{2d_{x_0}(1-\eps)}} = +\infty. \qedhere
\]
\end{proof}

\subsection*{An Alt-Caffarelli-Friedman monotonicity formula}

For a solution $(u,v)$ to \eqref{system}, recall the definition
\[
J(x_0,r)= \frac{1}{r^4} \int_{B_r(x_0)} \frac{|\nabla u(y)|^2 + u^2(y) v^2(y)}{|y-x_0|^{N-2}}\,dy  \int_{B_r(x_0)} \frac{|\nabla v(y)|^2 + u^2(y) v^2(y)}{|y-x_0|^{N-2}}\,dy.
\]
First of all, we report the useful formula (4.11) in \cite{Wa}: there exists $C>0$ independent on $x_0 \in \R^N$ and on $r \ge 1$ such that
\begin{equation}\label{4.11 by Wang}
\frac{1}{r^2} \int_{B_r(x_0)} \frac{|\nabla u(y)|^2 + u^2(y) v^2(y)}{|y-x_0|^{N-2}}\,dy \le \frac{C}{r^{N+2}}\int_{B_{2r}(x_0)} u^2.
\end{equation}
Recently, K. Wang proved an Alt-Caffarelli-Friedman monotonicity formula which enhances a previous similar result in \cite{NoTaTeVe}. 

\begin{theorem}[Theorem 4.3 in \cite{Wa}]\label{ACF}
Let $(u,v)$ be a solution of \eqref{system} satisfying \eqref{alg growth}, let $x_0 \in \R^N$. There exists $C(x_0)>0$ such that
\[
r \mapsto e^{-C(x_0) r^{-1/2}} J(x_0,r) \quad \text{is nondecreasing in $r$}
\]
for every $r \ge 1$.
\end{theorem}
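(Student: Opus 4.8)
The plan is to reproduce the Alt--Caffarelli--Friedman scheme of K. Wang (this is Theorem 4.3 in \cite{Wa}), which rests on the differential identity of Lemma 2.5 in \cite{NoTaTeVe}. Write $J(x_0,r)=r^{-4}\,I_u(x_0,r)\,I_v(x_0,r)$ with
\[
I_u(x_0,r)=\int_{B_r(x_0)}\frac{|\nabla u|^2+u^2v^2}{|y-x_0|^{N-2}}\,dy ,
\]
and $I_v$ defined analogously. Differentiating,
\[
\frac{d}{dr}\log J(x_0,r)=-\frac4r+\frac{I_u'(x_0,r)}{I_u(x_0,r)}+\frac{I_v'(x_0,r)}{I_v(x_0,r)} .
\]
Since $\Delta u=uv^2\ge 0$, a Rellich--Pohozaev integration by parts on $B_r(x_0)$ (with the weight $|y-x_0|^{2-N}$) followed by Cauchy--Schwarz on $\partial B_r(x_0)$ gives, exactly as in \cite{NoTaTeVe}, the one-sided bounds $I_u'/I_u\ge \frac2r\,\Gamma(\Lambda_1(x_0,r))$ and $I_v'/I_v\ge\frac2r\,\Gamma(\Lambda_2(x_0,r))$, with $\Gamma(t)=\sqrt{(\tfrac{N-2}{2})^2+t}-\tfrac{N-2}{2}$ and $\Lambda_1,\Lambda_2$ the boundary Rayleigh quotients written out in \eqref{eq in ACF}; these are combined precisely into \eqref{eq in ACF}. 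Hence the whole statement reduces to a quantitative spherical Friedland--Hayman inequality
\[
\Gamma(\Lambda_1(x_0,r))+\Gamma(\Lambda_2(x_0,r))\ge 2-C(x_0)\,r^{-1/2}\qquad(r\ge1) ,
\]
because inserting this into the displayed identity yields $\frac{d}{dr}\log J(x_0,r)\ge -C(x_0)r^{-3/2}$, and an integration (the right-hand side being integrable on $[1,+\infty)$) shows that $r\mapsto e^{-C(x_0)r^{-1/2}}J(x_0,r)$ is nondecreasing for $r\ge1$ after relabelling the constant.

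To prove the spherical inequality I would rescale. Set $\bar u_{x_0,r}(x)=u(x_0+rx)$, $\bar v_{x_0,r}(x)=v(x_0+rx)$ on $\partial B_1(0)$ and normalise $\hat u=\bar u_{x_0,r}/\|\bar u_{x_0,r}\|_{L^2(\partial B_1)}$, $\hat v=\bar v_{x_0,r}/\|\bar u_{x_0,r}\|_{L^2(\partial B_1)}$, so that $\int_{\partial B_1}\hat u^2=1$ while $\int_{\partial B_1}\hat v^2=\|\bar v_{x_0,r}\|_{L^2(\partial B_1)}^2/\|\bar u_{x_0,r}\|_{L^2(\partial B_1)}^2$, and both $\Lambda_1,\Lambda_2$ become a Dirichlet energy on $\partial B_1$ plus the competition term $r^2\big(\int_{\partial B_1}\bar u_{x_0,r}^2\big)\hat u^2\hat v^2$. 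Two uniform-in-$r$ facts are needed, and this is exactly where the dependence on $x_0$ enters:
\begin{itemize}
\item[(a)] $\tilde c_1(x_0)\le \|\bar u_{x_0,r}\|^2_{L^2(\partial B_1)}/\|\bar v_{x_0,r}\|^2_{L^2(\partial B_1)}\le\tilde c_2(x_0)$ for $r\ge1$;
\item[(b)] $\|\bar u_{x_0,r}\|^2_{L^2(\partial B_1)}\ge c_1(x_0)>0$ and $\|\bar v_{x_0,r}\|^2_{L^2(\partial B_1)}\ge c_1(x_0)>0$ for $r\ge1$.
\end{itemize}
Both follow from the blow-down machinery: since $(u,v)$ has algebraic growth, Lemma \ref{from growth to N} and Theorem \ref{thm blow down} apply, so $\lim_{r\to\infty}N(x_0,r)=d_{x_0}\ge1$ and every subsequential blow-down limit is $(\Psi^+,\Psi^-)$ for a nonconstant homogeneous harmonic polynomial $\Psi$ of degree $d_{x_0}$; the family of such $\Psi$ with $\int_{\partial B_1}\Psi^2=1$ is compact and $\int_{\partial B_1}(\Psi^\pm)^2$ is continuous and strictly positive on it, whence $\int_{\partial B_1}u_{x_0,r}^2$ and $\int_{\partial B_1}v_{x_0,r}^2$ stay bounded below for large $r$; combined with continuity in $r$ on compact intervals, with $\int_{\partial B_1}\bar u_{x_0,r}^2=H(x_0,r)\int_{\partial B_1}u_{x_0,r}^2\ge H(x_0,1)\int_{\partial B_1}u_{x_0,r}^2$, and with $H(x_0,r)\to+\infty$ (Corollary \ref{cor blow-down}), this yields (a) and (b), the latter with the competition parameter $c_1(x_0)r^2\to+\infty$.

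Granting (a)--(b) one has $\Lambda_1\ge\int_{\partial B_1}|\nabla_\theta\hat u|^2+c_1(x_0)r^2\,\hat u^2\hat v^2$ and $\Lambda_2\ge\int_{\partial B_1}|\nabla_\theta\hat v|^2+c_1(x_0)r^2\,\hat u^2\hat v^2$, with $\int_{\partial B_1}\hat u^2=1$ and $\int_{\partial B_1}\hat v^2$ trapped between two positive constants. At this point one invokes the quantitative Friedland--Hayman-type inequality on the sphere, Lemma 4.2 of \cite{Wa}, which under precisely these hypotheses gives $\Gamma(\Lambda_1)+\Gamma(\Lambda_2)\ge 2-C/\sqrt{\text{(competition parameter)}}\ge 2-C(x_0)r^{-1/2}$; this is the required inequality, and the proof is complete.

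The main obstacle is Lemma 4.2 of \cite{Wa}: already the qualitative statement (that a segregated nonnegative pair on $\mathbb{S}^{N-1}$ satisfies $\Gamma(\lambda_1)+\Gamma(\lambda_2)\ge 2$) needs the Friedland--Hayman spherical rearrangement argument, and here one needs the effective rate, using that the overlap $\int_{\partial B_1}\hat u^2\hat v^2$ is only driven to zero at a polynomial speed in the competition parameter; quantifying this is the delicate analytic input. A genuine secondary difficulty is the uniform-in-$r$ control (a)--(b) of the normalisations: it is the step that forbids a universal constant (the constant depends on the blow-down degree $d_{x_0}$ and profile at $x_0$) and the point at which the algebraic-growth hypothesis is essential, since it is what makes Theorem \ref{thm blow down} and Corollary \ref{cor blow-down} available.
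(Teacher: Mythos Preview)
Your proposal is correct and follows essentially the same route as the argument in \cite{Wa} (and as the paper's own Proposition~\ref{corol 4.8}, which is the uniform-in-$x_0$ refinement of this same statement): the differential inequality \eqref{eq in ACF}, the rescaling/normalisation on $\partial B_1$, the verification that the $L^2$-ratio of the two components stays trapped between two positive constants, and the final appeal to Lemma~4.2 of \cite{Wa}. The only minor methodological difference is in how the two-sided bounds (your (a)--(b)) are obtained: you argue directly via the blow-down limit and a compactness argument on normalised homogeneous harmonic polynomials of degree $d_{x_0}$, while the paper (in Step~1 of Proposition~\ref{corol 4.8}) argues by contradiction through the segregation Theorem~\ref{segregation thm}; both work, and your route makes the dependence of the constant on the blow-down degree more transparent.
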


\noindent \textbf{Acknowledgments:} the first author is supported by the ERC grant EPSILON ({\it Elliptic Pde's and Symmetry of Interfaces and Layers for Odd Nonlinearities}). The second author thanks Prof. Susanna Terracini for many inspiring discussions related to this problem, and Kelei Wang for some useful comments concerning his preprint {\it On the De Giorgi type conjecture for an elliptic system modeling phase separation}. The second author is partially supported by PRIN 2009 grant {\it Critical Point Theory and Perturbative Methods for Nonlinear Differential Equations}.

 \end{document}